\DeclareMathOperator*{\argmin}{argmin}
\newcommand{\abs}[1]{|#1|}
\newcommand{\Inner}[2]{\left\langle #1,#2\right\rangle}
\newcommand{\inner}[2]{\langle #1,#2\rangle}
\newcommand{\N}{\mathbb{N}}  
\newcommand{\Nc}{\mathbf{N}} 
\newcommand{\Norm}[1]{\left\|{#1}\right\|}
\newcommand{\norm}[1]{\|{#1}\|}
\newcommand{\R}{\mathbb{R}}  
\newcommand\set[1]{\{#1\}}
\newcommand\seq[1]{\left(#1\right)}
\newcommand\Set[1]{\left\{#1\right\}}
\newcommand{\tos}{\rightrightarrows}
\newcommand{\Lb}{\mathscr{L}}
\newcommand{\So}{\mathbf{S}}
\newcommand{\sol}{\mathcal{K}}
\newcommand{\bpt}[1]{\mathring{#1}}
\newtheorem{theorem}{Theorem}[section]
\newtheorem{definition}[theorem]{Definition}
\newtheorem{lemma}[theorem]{Lemma}
\newtheorem{proposition}[theorem]{Proposition}
\title{Iteration-complexity of a Rockafellar's 
proximal method of multipliers for convex
programming based on second-order approximations}
\author{M. Marques Alves\thanks{%
              Departamento de Matem\'atica,
              Universidade Federal de Santa Catarina,
              Florian\'opolis, Brazil, 88040-900.
               email: maicon.alves@ufsc.br.
              This author was partially 
              supported by CNPq grants no.
              406250/2013-8, 237068/2013-3 and 306317/2014-1.}
\and
R.D.C. Monteiro\thanks{
              School of Industrial and Systems
    Engineering, Georgia Institute of
    Technology, Atlanta, GA, 30332-0205.
    email: monteiro@isye.gatech.edu.
		This author
    was partially supported by CNPq Grant 406250/2013-8 and NSF Grant CMMI-1300221.
}
\and
Benar F. Svaiter\thanks{%
IMPA, Estrada Dona Castorina 110, 22460-320 
Rio de Janeiro, Brazil. 
          email: benar@impa.br
    This author was partially supported by CNPq grants
    474996/2013-1, 302962/2011-5, FAPERJ grant E-26/102.940/2011,
    and PRONEX-Optimization.}
}
\begin{document}

\maketitle

\begin{abstract}
  This paper studies the iteration-complexity of a new primal-dual algorithm
  based on Rockafellar's proximal method of multipliers (PMM) for solving
  smooth convex programming problems with inequality constraints. In each
  step, either a step of Rockafellar's PMM for a second-order model of the
  problem is computed or a relaxed extragradient step is performed. The
  resulting algorithm is a (large-step) relaxed hybrid proximal
  extragradient (r-HPE) method of multipliers, which combines Rockafellar's
  PMM with the r-HPE method.
\\
\\ 
  2000 Mathematics Subject Classification: 90C25, 90C30, 47H05.
 \\
 \\
\textsc{keywords:}
convex programming, proximal method of multipliers, second-order methods,
 hybrid extragradient, complexity
\end{abstract}

\section{Introduction}
\label{intro}

The smooth convex programming problem 
with (for the sake of simplicity)
only inequality constraints is
\begin{align}
  \label{eq:np.i}
  \begin{array}{ll} 
    \min& f(x)\qquad 
    \mathrm{s.t.}\;\; g(x)\leq 0
  \end{array}
\end{align} 
where $f: \R^n \to \R$ and the components of
$g=(g_1,\ldots,g_m): \R^n \to \R^m$ are 
smooth convex functions.
Dual methods for this problem 
solve the associated dual problem
\begin{align*}
  \max\;\bigg(\inf_{x\in\R^n} f(x)+\inner{y}{g(x)}\bigg)\qquad
   \mathrm{s.t.}\;\; y\geq 0
\end{align*}
and, \emph{en passant}, find a solution of
the original (primal) problem. 
Notice that a pair $(x,y)$ 
satisfies the Karush-Kuhn-Tucker conditions for
problem \eqref{eq:np.i} 
if and only if $x$ is a solution of this
problem, $y$ is a solution of the associated 
dual problem, and there is no
duality gap.

The \emph{method of multipliers}, 
which was proposed by
Hestenes~\cite{hes-1,hes-2} and Powel~\cite{pow-1} 
for equality
constrained optimization problems and extended by
Rockafellar~\cite{roc-mul} 
(see also~\cite{MR0418919}) to inequality
constrained convex programming problems, 
is a typical example of a dual
method.
It generates iteractively sequences $(x_k)$ and $(y_k)$ as
follows:
\begin{align*}
  x_k\approx \arg\min_{x\in\R^n} \;\; \mathscr{L}(x,y_{k-1},\lambda_k),
  \qquad y_k=y_{k-1}+\lambda_k\nabla_y \mathscr{L}(x_{k},y_{k-1},\lambda_k)
\end{align*}
where $\approx$ stands for approximate solution, $\lambda_k>0$, and
$\mathscr{L}(x,y,\lambda)$ is the \emph{augmented Lagrangian}
\begin{align*}
  \mathscr{L}(x,y,\lambda)&=f(x)+\dfrac{1}{2\lambda}
  [\norm{(y+\lambda  g(x))_+}^2-\norm{y}^2]\\
  &=\max_{y'\geq 0}\;\;f(x)+\inner{y'}{g(x)}-\dfrac{1}{2\lambda}\norm{y'-y}^2.
\end{align*}
The method of multipliers is also 
called the \emph{augmented Lagrangian
method}.
In the seminal article~\cite{MR0418919}, 
Rockafellar proved that the
method of multipliers is an instance of his proximal point
method (hereafter PPM)~\cite{Rock:ppa} applied to the dual objective function.
Still
in~\cite{MR0418919}, Rockafellar 
proposed a new primal-dual method for
\eqref{eq:np.i}, which we discuss next 
and that  we will use in this paper to 
design a new primal-dual method for this problem.

Rockafellar's \emph{proximal method of multipliers} (hereafter PMM)~\cite{MR0418919}
generates, for any starting point $(x_0,y_0)$, a sequence
$\big((x_k,y_k)\big)_{k\in\N}$ 
as
the approximate solution of a 
regularized saddle-point problem
%
\begin{align}
  \label{eq:pmm}
  (x_k,y_k) \approx \mbox{arg}
  & \min_{x\in\R^n}\max_{y\in\R^m_+}\;f(x)+
  \inner{y}{g(x)}
  + \dfrac{1}{2\lambda}
    \left[\norm{x-\bpt{x}}^2-\norm{y-\bpt{y}}^2\right]
\end{align}
where 
$(\bpt{x},\bpt{y})=(x_{k-1},y_{k-1})$
is the current iterate and
$\lambda=\lambda_k>0$ is a stepsize parameter.
Notice that the objective function of 
the above saddle-point problem
is obtained by adding 
to the augmented Lagrangian a proximal
term for the primal variable $x$.
If $ \inf \lambda_k>0$ and
\begin{align} \label{eq:summ-error}
  \sum_{k=1}^{\infty}\norm{(x_k,y_k)-(x^*_k,y^*_k)} < \infty
\end{align}
where $(x^*_k,y^*_k)$ is the (exact) solution of
\eqref{eq:pmm},
then $\seq{(x_k,y_k)}_{k\in \N}$ converges to a solution of 
the Karush-Kuhn-Tucker conditions for~\eqref{eq:np.i}
provided that 
there exist a pair satisfying these conditions.
%
This result follows from the facts that the
satisfaction of KKT conditions 
for~\eqref{eq:np.i} 
can be
formulated as a monotone inclusion problem 
and~\eqref{eq:pmm}
is the Rockafellar's 
PPM iteration 
for this inclusion problem 
(see comments after Proposition~\ref{pr:rf.rsdp}).
%
Although~\eqref{eq:pmm} is a (strongly)
convex-concave problem -- 
and hence has a unique solution -- the computation
of its exact or an approximate solution can be very hard.

We assume in this paper 
that $f$ and $g_i$ ($i=1,\ldots, m$) 
are $\mathscr{C}^2$
convex functions with Lipschitz continuous Hessians.
The method proposed in this paper either
solves a second-order model
of \eqref{eq:pmm} in which
second-order approximations
of $f$ and $g_i$ ($i=1,\dots, m$)
replace these functions in \eqref{eq:pmm}
or performs a (relaxed) extragradient step.
In its general form, PMM is an inexact PPM in that each iteration
approximately solves \eqref{eq:pmm} according to the summable error
criterion \eqref{eq:summ-error}.
The method proposed in this paper can also be viewed as an inexact PPM but
one based on a relative error criterion instead of the one in
\eqref{eq:summ-error}. More specifically, it can be viewed as an instance of
the (large-step) relaxed hybrid proximal extragradient (r-HPE) method
\cite{MonSva11-1,Sol-Sv:hy.ext,pre-print-benar} which we briefly discuss
next.

Given a point-to-set maximal monotone operator
$T:\R^p\tos \R^p$, the large-step r-HPE method 
computes approximate solutions for the monotone
inclusion problem $0\in T(z)$ as extragradient steps
\begin{align}
\label{eq:es.intr}
 z_{k}=z_{k-1}-\tau \lambda_k v_k,
\end{align}  
where $z_{k-1}$ is the current iterate,
$\tau\in (0,1]$
is a relaxation parameter, 
$\lambda_k>0$ is the stepsize 
and $v_k$ together with the pair
$(\tilde z_k,\varepsilon_k)$ satisfy 
the following
conditions
\begin{align}
\label{eq:ec.intr}
\begin{aligned}
 &v_k\in T^{[\varepsilon_k]}(\tilde z_k),\qquad
\norm{\lambda_k v_k+\tilde z_k-z_{k-1}}^2+
2\lambda_k\varepsilon_k
\leq \sigma^2\norm{\tilde z_k-z_{k-1}}^2,\\
&\lambda_k\norm{\tilde z_k-z_{k-1}}\geq \eta
\end{aligned}
\end{align}
where $\sigma\in [0,1)$ and $\eta>0$ are given constants and $T^\varepsilon$
denotes the $\varepsilon$-enlargement of $T$. (It has the property that
$T^\varepsilon(z) \supset T(z)$ for every $z$.)
%
The method proposed in this paper for solving the minimization
problem~\eqref{eq:np.i} can be viewed as a realization of the above
framework where the operator $T$ is the standard saddle-point operator
defined as
$T(z):=(\nabla f(x)+\nabla g(x)y,-g(x)+N_{\R^m_{+}}(y))$ for every
$z=(x,y)$.
More specifically, the method consists of two type of iterations. The ones
which perform extragradient steps can be viewed as a realization of
\eqref{eq:ec.intr}.
On the other hand, each one of the other iterations updates the stepsize by
increasing it by a multiplicative factor larger than one and then solves a
suitable second-order model of \eqref{eq:pmm}. After a few of these
iterations, an approximate solution satisfying \eqref{eq:ec.intr} is then
obtained. Hence, in contrast to the PMM which does not specify how to obtain
an approximate solution $(x_k,y_k)$ of \eqref{eq:pmm}, or equivalently the
prox inclusion $0 \in \lambda_k T(z) + z-z_{k-1}$ with $T$ as above, these
iterations provide a concrete scheme for computing an approximate solution
of this prox inclusion according to the relative criterion in
\eqref{eq:ec.intr}.
Pointwise and ergodic iteration-complexity bounds are then derived for
our method using the fact that the large-step r-HPE method
has pointwise and ergodic global convergence rates
of $\mathcal{O}(1/k)$ and 
$\mathcal{O}(1/k^{3/2})$, respectively.

The paper is organized as follows.
Section~\ref{sec:sqp} reviews 
some basic properties
of $\varepsilon$-enlargements of 
maximal monotone operators and
briefly reviews the basic properties of 
PPM and the large-step
r-HPE method.
Section~\ref{sec:sqp2} 
presents the basic properties 
of the minimization problem of interest
and some equivalences between 
certain saddle-point, complementarity
and monotone inclusion problems, as well
as of its regularized versions.  
Section~\ref{sec:em.pp} introduces an error measure, shows some of its
properties and how it is related to the relative error criterion for the
large-step r-HPE method.
Section~\ref{sec:qa} studies the smooth convex programming
problem~\eqref{eq:np.i} and its second-order approximations.
The proposed method 
(Algorithm 1) is presented in Section~\ref{sec:sqr-hpemm}
and its iteration-complexities (pointwise and ergodic)
are studied in Section~\ref{sec:rhpe}.

\section{Rockafellar's proximal point method 
and the hybrid proximal extragradient method}
\label{sec:sqp}

This work is based on Rockafellar's proximal point method (PPM). The new
method presented in this paper is a particular instance of the (large-step)
relaxed hybrid proximal extragradient (r-HPE) method \cite{pre-print-impa}.
For these reasons, in this section we review Rockafellar's PPM, the
large-step r-HPE method, and review some convergence properties of these
methods.
\subsection*{Maximal monotone operators, the monotone inclusion problem,
and Rockafellar's proximal point method}

A \emph{point-to-set operator} in ${\R^p}$, $T:{\R^p}\tos{\R^p}$, is a
relation $T\subset {\R^p}\times{\R^p}$ and
\[
T(z):=\{v\;|\; (z,v)\in T\},\qquad z\in {\R^p}.
\]
The \emph{inverse} of $T$ is $T^{-1}:{\R^p}\tos{\R^p}$,
$T^{-1}:=\{(v,z)\;|\; (z,v)\in T\}$. The \emph{domain} and the range of
$T$ are, respectively,
\[
D(T):=\set{z\;|\; T(z)\neq\emptyset},\quad R(T):=\set{v\;|\;\exists z\in
  {\R^p},\; v\in T(z)}.
\]
When $T(z)$ is a singleton for all $z\in D(T)$, it is usual to identify $T$
with the map $D(T)\ni z \mapsto v\in{\R^p}$ where $T(z)=\set{v}$.
If $T_1,T_2:{\R^p}\tos{\R^p}$ and $\lambda\in\R$, then
$T_1+T_2:{\R^p}\tos{\R^p}$ and $\lambda T_1:{\R^p}\tos{\R^p}$ are defined as
\[
(T_1+T_2)(z):=\set{v_1+v_2\;|\; v_1\in T_1(z),\;v_2\in T_2(z)},\qquad
(\lambda T_1)(z):=\set{\lambda v\;|\; v\in T_1(z)}.
\]

A point-to-set operator $T:{\R^p}\tos{\R^p}$ is \emph{monotone} if
\[
\inner{z-z'}{v-v'}\geq 0,\qquad \forall (z,v),(z',v')\in T
\]
and it is \emph{maximal monotone} if it is a maximal element in the family
of monotone point-to-set operators in ${\R^p}$ with respect to the partial
order of set inclusion.
The subdifferential of a proper closed convex function is a classical
example of a maximal monotone operator.
Minty's theorem~\cite{MR0169064} states that if $T$ is maximal monotone
and $\lambda>0$, then the 
\emph{proximal map} $(\lambda T+I)^{-1}$ is a
point-to-point nonexpansive operator with domain ${\R^p}$.

The \emph{monotone inclusion problem} is: given $T:{\R^p}\tos{\R^p}$ maximal
monotone, find $z$ such that
\begin{align}
  \label{eq:g.mip}
  0\in T(z).  
\end{align}
Rockafellar's PPM \cite{Rock:ppa}
generates, for any starting $z_0\in{\R^p}$, a sequence $(z_k)$ by the
approximate rule
\begin{align*}
  z_k\approx (\lambda_k T+I)^{-1}z_{k-1},
\end{align*}
where $(\lambda_k)$ is a sequence of strictly positive \emph{stepsizes}.
Rockafellar proved~\cite{Rock:ppa} that if \eqref{eq:g.mip} has a
solution and
\begin{align}
  \label{eq:et.ppm}
  \Norm{z_k-(\lambda_k T+I)^{-1}z_{k-1}}\leq e_k,\;
   \sum_{k=1}^{\infty}e_k<\infty,\;\;\;
\inf \lambda_k>0,
\end{align}
then $(z_k)$ converges to a solution of \eqref{eq:g.mip}.

In each step of the PPM, computation of the proximal map
$(\lambda T+I)^{-1}z$ amounts to solving the \emph{proximal (sub)
  problem}
\begin{align*}
  0\in \lambda T(z_+)+z_+-z,
\end{align*}
a regularized inclusion problem which, although well-posed,
is almost as hard as \eqref{eq:g.mip}.
From this fact stems the necessity of using approximations of the proximal
map, for example, as prescribed in \eqref{eq:et.ppm}.
Moreover, since each new iterate is, hopefully, just a better approximation
to the solution than the old one, if it was
computed with high accuracy, then the computational cost of each iteration
would be too high (or even prohibitive) and this would impair the overall
performance of the method (or even make it infeasible).

So, it seems natural to try to improve Rockafellar's PPM by devising a
variant of this method that would accept a \emph{relative} error tolerance
and wherein the progress of the iterates towards the solution set could be
estimated.
In the next subsection we discuss the hybrid proximal extragradient (HPE)
method, a variant of the PPM which aims to satisfy these goals.

\subsection*{Enlargements of maximal monotone 
operators and the hybrid
proximal extragradient method}

The HPE method
\cite{Sol-Sv:hy.ext,Sol-Sv:hy.proj} is a modification of
Rockafellar's PPM wherein:
(a) the proximal subproblem, in each
iteration, is to be solved within a 
\emph{relative} error tolerance and
(b) the update rule is modified so as to guarantee that the next iterate
is closer to the solution set by a quantifiable amount.

An additional feature of (a) is that, in some sense, errors in the
inclusion on the proximal subproblems are allowed.
Recall that the \emph{$\varepsilon$-enlargement}~\cite{Bu-Iu-Sv:teps} of
a maximal monotone operator $T:{\R^p}\tos{\R^p}$ is
\begin{align}
  \label{eq:teps}
  T^{[\varepsilon]}(z):=\set{v\;|\; \inner{z-z'}{v-v'}\geq-\varepsilon
  \;\forall(z',v')\in T},\qquad 
  z\in{\R^p},\;\varepsilon\geq0.
\end{align}
From now on in this section $T:{\R^p}\tos{\R^p}$ is a maximal monotone
operator. The r-HPE
method~\cite{Sol-Sv:hy.unif} for the monotone inclusion
problem~\eqref{eq:g.mip} proceed as follows: 
choose ${z}_0\in{\R^p}$ and $\sigma\in [0,1)$; 
for $i=1,2,\dots$ compute $\lambda_i>0$
and $(\tilde z_i,v_i,\varepsilon_i)\in \R^p\times \R^p\times \R_+$
such that
\begin{align}
  \label{eq:rhpe}
  \begin{aligned}
    & v_i\in T^{[\varepsilon_i]}(\tilde z_i),\;\ 
  \norm{{\lambda}_i v_i+\tilde
      {z}_i-{z}_{i-1}}^2+2{\lambda}_i\varepsilon_i
    \leq \sigma^2\norm{\tilde {z}_i-{z}_{i-1}}^2,\\
		& \text{choose }\tau_i\in(0,1]\text{ and
      set}\\
		&{z}_i={z}_{i-1}-\tau_i{\lambda}_i v_i.
  \end{aligned}
\end{align}
In practical applications, 
 each problem has a particular structure which
may render feasible the computation of 
$\lambda_i$, $\tilde z_i$, $v_i$,
and $\varepsilon_i$ as prescribed above.
For example, $T$ may be Lipschitz continuous, it may be differentiable,
or it may be a sum of an operator which has a proximal map easily
computable with others with some of these properties.
Prescriptions for computing 
$\lambda_i$, $\tilde z_i$, $v_i$,
and $\varepsilon_i$ under 
each one of these assumptions were presented
in~\cite{MonSva10-1,MonSva10-3,MonSva11-1,MonSva10-2,sol.sva-glo.ao99,Sol-Sv:hy.ext,MR1740961,MR1953827}.

Computation of  $(\lambda T+I)^{-1}z$ is equivalent to
the resolution of  an
inclusion-equation system:
\[
z_+=(\lambda T+I)^{-1}z\iff  \exists v\in T(z_+),\;\lambda v+z_+-z=0.
\]
Whence, the error criterion in the 
first line of~\eqref{eq:rhpe} relaxes
both the inclusion and the equality at the 
right-hand side of the above
equivalence.
Altogether, each r-HPE iteration consists in:
(1) solving (with a relative error tolerance) a 
``proximal''
inclusion-equation system;
(2) updating $z_{i-1}$ to $z_i$ by means of an 
extragradient step, that is,
using $v_i\in T^{[\varepsilon_i]}(\tilde z_i)$. 

In the remainder part of this section we 
present some convergence properties of the
r-HPE method which were essentially proved in 
\cite{pre-print-impa} and revised
in~\cite{pre-print-benar}.
The next proposition shows that 
$z_i$ is closer than $z_{i-1}$ to
the solution set with respect to the square of the 
norm, 
by a quantifiable
amount, and present some useful estimations.

\begin{proposition}
 [{\cite[Proposition 2.2]{pre-print-benar}}]
  \label{pr:a1}
  For any $i\geq 1$ and $z^*\in T^{-1}(0)$,
  \begin{enumerate}
  \item[\emph{(a)}]
    \label{it:1-b}
    $(1-\sigma)\norm{\tilde z_i-z_{i-1}} 
   \leq \norm{\lambda_i v_i} \leq
    (1+\sigma)\norm{\tilde z_i-z_{i-1}}$
    and $2\lambda_i\varepsilon_i 
   \leq \sigma^2 \norm{\tilde z_i-z_{i-1}}^2$;
  \item[\emph{(b)}]
    \label{it:2-b}
    $\norm{z^*-z_{i-1}}^2 \geq \norm{z^*-z_i}^2+
    \tau_i(1-\sigma^2)\norm{\tilde
      z_i-z_{i-1}}^2\geq\norm{z^*-z_i}^2$;
  \item[\emph{(c)}]
    \label{it:3-b}
    $\norm{z^*-z_0}^2 \geq 
   \norm{z^*-z_i}^2+(1-\sigma^2)\sum_{j=1}^{i}
    \tau_j\norm{\tilde z_j-z_{j-1}}^2$;
  \item[\emph{(d)}] \label{it:4-b}
    $\norm{z^*-\tilde z_i}\leq
   \norm{z^*-z_{i-1}}/\sqrt{1-\sigma^2}$
    and $\norm{\tilde z_i-z_{i-1}} 
   \leq \norm{z^*-z_{i-1}}/\sqrt{1-\sigma^2}$.
  \end{enumerate}
 \end{proposition}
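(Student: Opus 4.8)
The proof of Proposition~\ref{pr:a1} is essentially a sequence of elementary manipulations starting from the two inequalities in the first line of \eqref{eq:rhpe}, so the plan is to isolate each estimate and derive it from the previous ones.

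First I would establish item (a). The lower and upper bounds on $\norm{\lambda_i v_i}$ follow by writing $\lambda_i v_i = (\lambda_i v_i + \tilde z_i - z_{i-1}) - (\tilde z_i - z_{i-1})$ and applying the triangle inequality together with the error criterion, which gives $\norm{\lambda_i v_i + \tilde z_i - z_{i-1}} \le \sigma \norm{\tilde z_i - z_{i-1}}$ (simply dropping the nonnegative term $2\lambda_i \varepsilon_i$). The bound $2\lambda_i\varepsilon_i \le \sigma^2 \norm{\tilde z_i - z_{i-1}}^2$ is immediate from the same inequality, again discarding the nonnegative term $\norm{\lambda_i v_i + \tilde z_i - z_{i-1}}^2$.

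Next I would prove item (b), which is the heart of the argument. Fix $z^* \in T^{-1}(0)$, so $0 \in T(z^*)$. Since $v_i \in T^{[\varepsilon_i]}(\tilde z_i)$, the definition \eqref{eq:teps} of the $\varepsilon$-enlargement applied to the pair $(z^*, 0) \in T$ yields $\inner{\tilde z_i - z^*}{v_i} \ge -\varepsilon_i$. Now expand $\norm{z^* - z_i}^2 = \norm{z^* - z_{i-1} + \tau_i \lambda_i v_i}^2$ using $z_i = z_{i-1} - \tau_i \lambda_i v_i$, and then substitute the error inequality from \eqref{eq:rhpe} together with the enlargement bound; collecting terms produces the claimed decrease $\norm{z^* - z_{i-1}}^2 \ge \norm{z^* - z_i}^2 + \tau_i(1-\sigma^2)\norm{\tilde z_i - z_{i-1}}^2$, and the second inequality in (b) follows since $\tau_i \in (0,1]$ and $\sigma \in [0,1)$ make the extra term nonnegative. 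The main obstacle, such as it is, lies in carrying out this expansion cleanly: one must add and subtract $\tilde z_i$ inside the norm, group the cross terms so that the quantity $\norm{\lambda_i v_i + \tilde z_i - z_{i-1}}^2 + 2\lambda_i \varepsilon_i$ appears and can be bounded by $\sigma^2 \norm{\tilde z_i - z_{i-1}}^2$, and track the factor $\tau_i$ (and, if needed, use $\tau_i^2 \le \tau_i$) so that nothing is lost.

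Finally, items (c) and (d) are corollaries. Item (c) follows by telescoping the first inequality in (b) over $j = 1, \dots, i$ (with starting point $z_0$), since each step contributes an additive term $(1-\sigma^2)\tau_j \norm{\tilde z_j - z_{j-1}}^2$. For item (d), the bound on $\norm{\tilde z_i - z_{i-1}}$ is read off directly from (b) with $i-1$ in place of the running index: $(1-\sigma^2)\norm{\tilde z_i - z_{i-1}}^2 \le \tau_i(1-\sigma^2)\norm{\tilde z_i - z_{i-1}}^2/\tau_i \le \norm{z^* - z_{i-1}}^2$ after noting $\tau_i \le 1$; and the bound on $\norm{z^* - \tilde z_i}$ then follows from the triangle inequality $\norm{z^* - \tilde z_i} \le \norm{z^* - z_{i-1}} + \norm{z_{i-1} - \tilde z_i}$ combined with the estimate just obtained, or alternatively by a direct expansion of $\norm{z^* - \tilde z_i}^2$ parallel to the one used for (b). All of these steps are routine once (b) is in hand.
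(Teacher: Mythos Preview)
The paper does not prove this proposition; it is quoted from \cite[Proposition~2.2]{pre-print-benar}. So there is no in-paper proof to compare against, and your outline for items (a)--(c) is the standard one and is correct.

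There is, however, a genuine slip in your treatment of item (d). For the second bound you write
\[
(1-\sigma^2)\norm{\tilde z_i-z_{i-1}}^2 \le \tau_i(1-\sigma^2)\norm{\tilde z_i-z_{i-1}}^2/\tau_i \le \norm{z^*-z_{i-1}}^2
\]
``after noting $\tau_i\le 1$''. The first step is an equality, and the second step is exactly what you are trying to prove; item (b) as stated only gives $\tau_i(1-\sigma^2)\norm{\tilde z_i-z_{i-1}}^2\le\norm{z^*-z_{i-1}}^2$, which is weaker when $\tau_i<1$. The fix is to observe that your computation for (b), \emph{before} invoking $\tau_i\le 1$ to discard $(\tau_i-1)\norm{\lambda_i v_i}^2$, actually yields the $\tau_i$-free inequality
\[
\norm{z^*-(z_{i-1}-\lambda_i v_i)}^2+(1-\sigma^2)\norm{\tilde z_i-z_{i-1}}^2\le\norm{z^*-z_{i-1}}^2,
\]
from which the bound follows immediately. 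For the first bound in (d), your triangle-inequality route $\norm{z^*-\tilde z_i}\le\norm{z^*-z_{i-1}}+\norm{\tilde z_i-z_{i-1}}$ gives only $(1+1/\sqrt{1-\sigma^2})\norm{z^*-z_{i-1}}$, which is strictly larger than $\norm{z^*-z_{i-1}}/\sqrt{1-\sigma^2}$. Your alternative ``direct expansion of $\norm{z^*-\tilde z_i}^2$ parallel to (b)'' does work: writing $\tilde z_i-z_{i-1}=-\lambda_i v_i+r_i$ with $r_i=\lambda_i v_i+\tilde z_i-z_{i-1}$, expanding $\norm{z^*-z_{i-1}}^2=\norm{(z^*-\tilde z_i)+(\tilde z_i-z_{i-1})}^2$, using $\langle z^*-\tilde z_i,v_i\rangle\le\varepsilon_i$, and then completing the square $\sigma^2\norm{z^*-\tilde z_i}^2+2\langle z^*-\tilde z_i,r_i\rangle\ge -\norm{r_i}^2/\sigma^2$ together with the error criterion yields $(1-\sigma^2)\norm{z^*-\tilde z_i}^2\le\norm{z^*-z_{i-1}}^2$. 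You should commit to that route and drop the triangle-inequality sentence.
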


The aggregate stepsize 
$\Lambda_i$ and the ergodic sequences
$(\tilde {z}_i^a)$, $(\tilde v_i^a)$, 
$(\varepsilon_i^a)$ associated
with the sequences $(\lambda_i)$, 
$(\tilde {z}_i)$,  $(v_i)$, and
$(\varepsilon_i)$ are, respectively,
\begin{align}
\label{eq:d.eg}
  \begin{aligned}
    &\Lambda_i:=\sum_{j=1}^i \tau_j{\lambda}_j,\\
    &\tilde {z}_i^{\,a}:= \frac{1}{\;\Lambda_i}\;
   \sum_{j=1}^i\tau_j {\lambda}_j
    \tilde {z}_j, \quad 
   v_i^{\,a}:= \frac{1}{\;\Lambda_i}\;\sum_{j=1}^i
    \tau_j{\lambda}_j v_j,\\ 
   &\varepsilon_i^{\,a}:=
    \frac{1}{\;\Lambda_i}\;\sum_{j=1}^i 
    \tau_j{\lambda}_j (\varepsilon_j
    +\inner{\tilde {z}_j-
     \tilde {z}_i^{\,a}}{v_j-v_i^{\,a}}).
  \end{aligned}
\end{align}

Next we present the pointwise and 
ergodic iteration-complexities
of the large-step r-HPE method, i.e.,
the r-HPE method with a large-step condition 
\cite{MonSva10-3,MonSva11-1}.
We also assume that
the sequence of relaxation parameters
$(\tau_i)$ is bounded away from zero.

\begin{theorem}[{\cite[Theorem 2.4]{pre-print-benar}}]
  \label{lm:rhpe2}
  If $d_0$ is the distance from 
  $z_0$ to $T^{-1}(0)\neq\emptyset$
  and 
  \begin{align*}
    \lambda_i\norm{\tilde z_i-z_{i-1}} \geq
    \eta>0,\quad \tau_i\geq \tau>0
    \qquad\qquad i=1,2,\ldots
  \end{align*}
  then, for any $i\geq1$,
  \begin{enumerate}
  \item[\emph{(a)}]
    there exists $j\in\set{1,\dots,i}$ 
    such that $v_j\in T^{[\varepsilon_j]}(\tilde z_j)$ 
    and 
    \begin{align*}
      \norm{v_j}\leq\dfrac{d_0^2}{i\tau(1-\sigma)\eta},\qquad
      \varepsilon_j\leq
      \dfrac{\sigma^2d_0^3}
       {(i\tau)^{3/2}(1-\sigma^2)^{3/2}2\eta};
    \end{align*}
  \item[\emph{(b)}]
    $v_i^a\in T^{[\varepsilon_i^a]}(\tilde z_i^a)$,
    $\norm{v_i^a}\leq 
    \dfrac{2d_0^2}{(i\tau)^{3/2}
      (\sqrt{1-\sigma^2})\eta}$,
      and $\varepsilon_i^a\leq
    \dfrac{2d_0^3}{(i\tau)^{3/2}(1-\sigma^2)\eta}$.
  \end{enumerate}
\end{theorem}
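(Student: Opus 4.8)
Throughout, fix $z^{*}$ to be the projection of $z_{0}$ onto $T^{-1}(0)$, so that $\|z^{*}-z_{0}\|=d_{0}$, and write $a_{j}:=\|\tilde z_{j}-z_{j-1}\|$. Proposition~\ref{pr:a1}(c) gives at once the two facts $\sum_{j=1}^{i}\tau_{j}a_{j}^{2}\le d_{0}^{2}/(1-\sigma^{2})$ and $\|z^{*}-z_{j}\|\le d_{0}$ for all $j$, while Proposition~\ref{pr:a1}(a),(d) will be used to control $\|v_{j}\|$, $\varepsilon_{j}$ and $\|z^{*}-\tilde z_{j}\|$. The plan is to combine these with a lower bound on the aggregate stepsize $\Lambda_{i}$ forced by the large-step hypothesis, and this bound is the crux: since $\lambda_{j}a_{j}\ge\eta$ gives $\lambda_{j}^{-2}\le a_{j}^{2}/\eta^{2}$, summing against $\tau_{j}$ and using the first fact yields $\sum_{j=1}^{i}\tau_{j}\lambda_{j}^{-2}\le d_{0}^{2}/(\eta^{2}(1-\sigma^{2}))$; applying H\"older's inequality with exponents $3/2$ and $3$ to $\tau_{j}=(\tau_{j}\lambda_{j})^{2/3}(\tau_{j}\lambda_{j}^{-2})^{1/3}$ gives $\sum_{j=1}^{i}\tau_{j}\le\Lambda_{i}^{2/3}\big(\sum_{j=1}^{i}\tau_{j}\lambda_{j}^{-2}\big)^{1/3}$, and since $\tau_{j}\ge\tau$ forces $\sum_{j=1}^{i}\tau_{j}\ge i\tau$, rearranging produces
\[
\Lambda_{i}\ \ge\ \frac{(i\tau)^{3/2}\,\eta\,\sqrt{1-\sigma^{2}}}{d_{0}}.
\]
I expect this H\"older estimate to be the single main point; the rest is bookkeeping layered on top of the general r-HPE identities.

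For part~(a) the inclusion $v_{j}\in T^{[\varepsilon_{j}]}(\tilde z_{j})$ holds for every $j$ by \eqref{eq:rhpe}, so it suffices to pick a good index: choose $j\in\{1,\dots,i\}$ minimizing $\tau_{j}a_{j}^{2}$, so that $\tau_{j}a_{j}^{2}\le\frac1i\sum_{l=1}^{i}\tau_{l}a_{l}^{2}\le d_{0}^{2}/(i(1-\sigma^{2}))$ and hence $a_{j}^{2}\le d_{0}^{2}/(i\tau(1-\sigma^{2}))$. Proposition~\ref{pr:a1}(a) gives $\|v_{j}\|\le(1+\sigma)a_{j}/\lambda_{j}$ and $2\lambda_{j}\varepsilon_{j}\le\sigma^{2}a_{j}^{2}$; inserting $\lambda_{j}^{-1}\le a_{j}/\eta$ (again the large-step condition) turns these into $\|v_{j}\|\le(1+\sigma)a_{j}^{2}/\eta$ and $\varepsilon_{j}\le\sigma^{2}a_{j}^{3}/(2\eta)$, and substituting the bound on $a_{j}$ and using $(1+\sigma)/(1-\sigma^{2})=1/(1-\sigma)$ yields exactly the two asserted estimates.

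For part~(b), the update rule $z_{j}=z_{j-1}-\tau_{j}\lambda_{j}v_{j}$ telescopes to $\Lambda_{i}v_{i}^{a}=\sum_{j=1}^{i}\tau_{j}\lambda_{j}v_{j}=z_{0}-z_{i}$, so $\|v_{i}^{a}\|=\|z_{0}-z_{i}\|/\Lambda_{i}\le 2d_{0}/\Lambda_{i}$ by $\|z^{*}-z_{i}\|\le d_{0}$. The membership $v_{i}^{a}\in T^{[\varepsilon_{i}^{a}]}(\tilde z_{i}^{a})$ with $\varepsilon_{i}^{a}$ as in \eqref{eq:d.eg}, and $\varepsilon_{i}^{a}\ge0$, follow from the transportation formula for $\varepsilon$-enlargements applied to the convex combination with weights $\tau_{j}\lambda_{j}/\Lambda_{i}$. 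To bound $\varepsilon_{i}^{a}$ I would sum, over $j=1,\dots,i$, the one-step r-HPE estimate $\|z-z_{j}\|^{2}\le\|z-z_{j-1}\|^{2}-2\tau_{j}\lambda_{j}[\langle v_{j},\tilde z_{j}-z\rangle+\varepsilon_{j}]$ (valid for an arbitrary reference point $z$ and any $\tau_{j}\le1$; it is the estimate underlying Proposition~\ref{pr:a1}(b)), obtaining $2\sum_{j}\tau_{j}\lambda_{j}[\langle v_{j},\tilde z_{j}-z\rangle+\varepsilon_{j}]\le\|z-z_{0}\|^{2}$; combining with the identity $\sum_{j}\tau_{j}\lambda_{j}[\langle v_{j},\tilde z_{j}-z\rangle+\varepsilon_{j}]=\Lambda_{i}[\langle v_{i}^{a},\tilde z_{i}^{a}-z\rangle+\varepsilon_{i}^{a}]$ (immediate from \eqref{eq:d.eg} and $\sum_{j}\tau_{j}\lambda_{j}(\tilde z_{j}-\tilde z_{i}^{a})=0$), then setting $z=z_{0}$, substituting $\Lambda_{i}v_{i}^{a}=z_{0}-z_{i}$ and completing the square, one arrives at $\Lambda_{i}\varepsilon_{i}^{a}\le\tfrac12\|z_{0}-\tilde z_{i}^{a}\|^{2}$, and Proposition~\ref{pr:a1}(d) together with $\|z^{*}-z_{j-1}\|\le d_{0}$ bounds $\|z_{0}-\tilde z_{i}^{a}\|$ by a constant multiple of $d_{0}/\sqrt{1-\sigma^{2}}$.

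Feeding the lower bound on $\Lambda_{i}$ into $\|v_{i}^{a}\|\le2d_{0}/\Lambda_{i}$ gives the stated bound on $\|v_{i}^{a}\|$, and into the estimate just obtained gives $\varepsilon_{i}^{a}=\mathcal{O}\big(d_{0}^{3}/((i\tau)^{3/2}\eta)\big)$ with the claimed power of $i$. The part I expect to require genuine care is pinning the $\sigma$-dependent constant in the $\varepsilon_{i}^{a}$ bound to the precise stated form: the naive two-step chaining (``general ergodic bound, then insert $\Lambda_{i}$'') loses a factor $\sqrt{1-\sigma^{2}}$, and recovering the sharp exponent requires folding the large-step inequality into the $\varepsilon_{i}^{a}$ analysis itself rather than applying it afterwards; for this last step I would follow the bookkeeping of \cite{pre-print-benar}.
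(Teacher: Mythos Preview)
The paper does not prove this theorem; it is quoted verbatim from \cite[Theorem~2.4]{pre-print-benar} and no argument is supplied here, so there is no ``paper's own proof'' to compare against. What can be said is that your sketch follows the standard route taken in the large-step r-HPE literature and is essentially sound.

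Your treatment of part~(a) is correct and complete: minimizing $\tau_{j}a_{j}^{2}$, using Proposition~\ref{pr:a1}(a),(c) and the large-step inequality $\lambda_{j}^{-1}\le a_{j}/\eta$ gives the stated bounds with the stated constants. For part~(b), the H\"older argument producing $\Lambda_{i}\ge (i\tau)^{3/2}\eta\sqrt{1-\sigma^{2}}/d_{0}$ is correct and is indeed the key estimate; the telescoping $\Lambda_{i}v_{i}^{a}=z_{0}-z_{i}$ combined with $\|z^{*}-z_{i}\|\le d_{0}$ then yields the claimed bound on $\|v_{i}^{a}\|$ exactly. Your derivation of $\Lambda_{i}\varepsilon_{i}^{a}\le\tfrac12\|z_{0}-\tilde z_{i}^{a}\|^{2}$ via the one-step estimate and completing the square is also valid.

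The one genuine residual issue, which you flag yourself, is the $\sigma$-dependent constant in the $\varepsilon_{i}^{a}$ bound. Chaining $\Lambda_{i}\varepsilon_{i}^{a}\le\tfrac12\|z_{0}-\tilde z_{i}^{a}\|^{2}$ with $\|z_{0}-\tilde z_{i}^{a}\|\le d_{0}(1+1/\sqrt{1-\sigma^{2}})$ and then inserting the lower bound on $\Lambda_{i}$ gives a constant strictly larger than $2/(1-\sigma^{2})$ for any $\sigma>0$ (the discrepancy is $(1+1/\sqrt{1-\sigma^{2}})^{2}/(2\sqrt{1-\sigma^{2}})$ versus $2/(1-\sigma^{2})$, and $(s-1)^{2}\le 0$ fails for $s=\sqrt{1-\sigma^{2}}<1$). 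So this last step is not merely bookkeeping: to match the stated constant one does need the sharper accounting in \cite{pre-print-benar}, as you correctly anticipate.
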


\noindent
{\bf Remark.} 
We mention that the inclusion in Item (a) of
Theorem \ref{lm:rhpe2} is in the 
enlargement of $T$ 
which appears in the inclusion in \eqref{eq:rhpe}.
To be more precise, in some applications the operator
$T$ may have a special structure, like for instance
$T=S+N_{\mathcal{X}}$, where $S$ is point-to-point
and $N_{\mathcal{X}}$ is the normal cone operator 
of a closed convex
set $\mathcal{X}$, and the inclusion in
\eqref{eq:rhpe}, in this case, is 
$v_i\in \left(S+N_{\mathcal{X}}^{[\varepsilon_i]}
\right)
(\tilde z_i)$, which is stronger
than $v_i\in T^{[\varepsilon_i]}(\tilde z_i)$.
In such a case, Item (a) would guarantee that
$v_j\in \left(S+N_{\mathcal{X}}^{[\varepsilon_j]}\right)
(\tilde z_j)$. Unfortunately, the observation is
not true for the Item (b).

The next result was proved in \cite[Corollary 1]{error-bound}
\begin{lemma}
  \label{lm:d}
  If
 $\bpt{z}\in{\R^p}$, $\lambda>0$, and
  $v\in T^{[\varepsilon]}(z)$, then
  \begin{align*}
    \norm{\lambda v+z-\bpt{z}}^2+2\lambda\varepsilon
    \geq \Norm{z-(\lambda T+I)^{-1}\bpt{z}}^2
    +\Norm{\lambda v-\left(\bpt{z}-(\lambda T+I)^{-1}\bpt{z}\right)}^2.
  \end{align*}
\end{lemma}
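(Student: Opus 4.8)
The plan is to expand both sides in terms of the exact proximal point and collapse the claimed inequality onto the defining property of the $\varepsilon$-enlargement \eqref{eq:teps}. First I would introduce $\bar z:=(\lambda T+I)^{-1}\bpt z$, which is well defined by Minty's theorem, and observe that, by definition of the proximal map, there exists $\bar v\in T(\bar z)$ with $\lambda\bar v+\bar z-\bpt z=0$; equivalently, $\bpt z-\bar z=\lambda\bar v$.

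Using the identity $\bpt z=\bar z+\lambda\bar v$, the argument in the first term on the left becomes $\lambda v+z-\bpt z=(z-\bar z)+\lambda(v-\bar v)$, so that
\[
\norm{\lambda v+z-\bpt z}^2=\norm{z-\bar z}^2+2\lambda\inner{z-\bar z}{v-\bar v}+\norm{\lambda(v-\bar v)}^2 .
\]
On the right-hand side of the claimed inequality, the same identity gives $\lambda v-(\bpt z-\bar z)=\lambda(v-\bar v)$, hence that right-hand side equals $\norm{z-\bar z}^2+\norm{\lambda(v-\bar v)}^2$. Subtracting, the inequality to be established reduces to $2\lambda\inner{z-\bar z}{v-\bar v}+2\lambda\varepsilon\ge 0$, i.e., since $\lambda>0$, to $\inner{z-\bar z}{v-\bar v}\ge-\varepsilon$.

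I would finish by noting that this last inequality is exactly \eqref{eq:teps} applied to $v\in T^{[\varepsilon]}(z)$ with the pair $(\bar z,\bar v)\in T$. I do not anticipate any genuine obstacle: the whole argument is essentially a one-line computation once one writes $\bpt z-\bar z=\lambda\bar v$ with $\bar v\in T(\bar z)$; the only point requiring a little care is that it is precisely this bookkeeping identity that makes the two cross terms cancel and leaves the enlargement inequality.
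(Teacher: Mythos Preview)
Your argument is correct. Once you set $\bar z:=(\lambda T+I)^{-1}\bpt z$ and $\bar v:=(\bpt z-\bar z)/\lambda\in T(\bar z)$, the identity $\lambda v+z-\bpt z=(z-\bar z)+\lambda(v-\bar v)$ indeed makes the two squares on the right drop out of the expansion of the left-hand side, leaving exactly $2\lambda\bigl(\inner{z-\bar z}{v-\bar v}+\varepsilon\bigr)\ge 0$, which is precisely the defining inequality \eqref{eq:teps} of $T^{[\varepsilon]}$ evaluated at $(\bar z,\bar v)\in T$.

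As for comparison: the paper does not give its own proof of this lemma; it simply quotes it as \cite[Corollary~1]{error-bound}. Your self-contained derivation is the standard one and is in fact what underlies that reference, so there is no discrepancy to discuss.
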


\section{The smooth convex programming problem}
\label{sec:sqp2}

Consider the smooth convex optimization problem
\eqref{eq:np.i}, i.e.,
\begin{align}
  \label{eq:np}
(P)\qquad
    \min& f(x)\qquad
    \mathrm{s.t.}\;\; g(x)\leq 0,
\end{align}
where $f: \R^n \to \R$ and $g=(g_1,\ldots,g_m): \R^n \to \R^m$.
From now on we assume that:

\begin{itemize}
\item[O.1)] $f,g_1,\dots,g_m$ are convex $\mathscr{C}^2$ functions;
\item[O.2)] the Hessians of $f$ and $g_1,\ldots,g_m$ are Lipschitz
  continuous with Lipschitz constants $L_0$ and $L_1,\ldots,L_m$,
  respectively, with $L_i\neq 0$ for some $i\geq 1$;
\item[O.3)] there exists $(x,y)\in\R^n\times\R^m$ satisfying
   Karush-Kuhn-Tucker conditions for \eqref{eq:np},
 \begin{align}
   \label{eq:kkt.p}
   \nabla f(x)+ \nabla g(x)y=0,
\;\;
   g(x)\leq 0,\;\;\; y\geq 0,\;\;
   \inner{y}{g(x)}=0.   
 \end{align}
\end{itemize}


The \emph{canonical Lagrangian} of problem~\eqref{eq:np}
$\Lb:\R^n\times \R^m \to \R$ and the corresponding \emph{saddle-point
  operator} $\So:\R^n\times\R^m\to\R^n\times\R^m$ are, respectively,
\begin{align}
  \label{def:lag.e}
    \Lb(x,y)&:=f(x)+\inner{y}{g(x)},\quad
    \So(x,y):=
              \begin{bmatrix}
                \nabla_x\Lb(x,y)\\ -\nabla_y\Lb(x,y)
              \end{bmatrix}
    = \begin{bmatrix}
      \nabla f(x)  + \nabla g (x) y \\
      -g(x)
    \end{bmatrix}.
\end{align}
The
\emph{normal cone operator} of $\R^n\times\R^m_+$,
$\Nc_{\R^n\times\R^m_+}:\R^n\times\R^m
\tos\R^n\times\R^m$, is the
subdifferential of the indicator function of this set
$\delta_{\R^n\times\R^m_+}:\R^n\times\R^m
\to\overline{\R}$, that is,
\begin{align}
  \label{eq:if.nc}
  \delta_{\R^n\times\R^m_+}(x,y):=
  \begin{cases}
    0,& \text{if }y\geq 0;\\
    \infty& \text{otherwise},
  \end{cases}\qquad
  \Nc_{\R^n\times\R^m_+}:=\partial   \delta_{\R^n\times\R^m_+}.
\end{align}

Next we review some reformulations of \eqref{eq:kkt.p}.

\begin{proposition}
  \label{pr:rf.sdp}
  The point-to-set operator $\So+\Nc_{\R^n\times\R^m_+}$ is maximal
  monotone and for any $(x,y)\in\R^n\times\R^m$ the following conditions
  are equivalent:
  \begin{enumerate}
  \item[\emph{(a)}] $ \nabla f(x)+\nabla g(x)y=0$,
 $g(x)\leq 0$, $y\geq 0$,
  and $\inner{y}{g(x)}=0$;
  \item[\emph{(b)}] $(x,y)$ is a solution of the saddle-point problem
    $\max_{y\in\R^m_+}\min_{x\in\R^n}\; f(x)+\inner{y}{g(x)}$;
  \item[\emph{(c)}] $(x,y)$ is a solution of the complementarity problem
    \begin{align*}
      (x,y)\in \R^n\times\R^m;\;w\in\R^m;\;\;
      \So(x,y)-(0,w)=0;\;\;
      y,w\geq 0;\;\;\inner{y}{w}=0;
    \end{align*}
    \item[\emph{(d)}] 
  $(x,y)$ is a solution of the monotone inclusion
 problem
   $0\in\left(\So+\Nc_{\R^n\times\R^m_+}\right)(x,y)$.
  \end{enumerate}
\end{proposition}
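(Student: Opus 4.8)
The plan is to establish the four-way equivalence by proving a cycle of implications together with the maximal monotonicity claim, which I would actually dispose of first. For maximal monotonicity, note that $\So$ is the (point-to-point) monotone operator associated with the convex-concave function $\Lb$ via the standard saddle-point construction; since $f$ and each $g_i$ are convex and $\mathscr{C}^2$, $\Lb(\cdot,y)$ is convex for each fixed $y\geq 0$ and $\Lb(x,\cdot)$ is affine (hence concave) for each fixed $x$, so $\So$ is monotone and continuous, therefore maximal monotone on all of $\R^n\times\R^m$. The operator $\Nc_{\R^n\times\R^m_+}$ is maximal monotone as the subdifferential of the proper closed convex function $\delta_{\R^n\times\R^m_+}$. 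Since $\So$ has full domain and is continuous (in particular, it is point-to-point and everywhere defined), the classical sum theorem for maximal monotone operators (Rockafellar) guarantees that $\So+\Nc_{\R^n\times\R^m_+}$ is maximal monotone; alternatively one can invoke the fact that the sum of a maximal monotone operator with a continuous monotone operator of full domain is maximal monotone.

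For the equivalences I would argue (d) $\Leftrightarrow$ (a), then (a) $\Leftrightarrow$ (c), then (a) $\Leftrightarrow$ (b). The implication (d) $\Leftrightarrow$ (a) is essentially a computation: $0\in(\So+\Nc_{\R^n\times\R^m_+})(x,y)$ means there exists $\xi\in\Nc_{\R^n\times\R^m_+}(x,y)$ with $\So(x,y)+\xi=0$. By \eqref{eq:if.nc}, $\Nc_{\R^n\times\R^m_+}(x,y)=\{0\}\times N_{\R^m_+}(y)$, which is empty unless $y\geq 0$, in which case it equals $\{0\}\times\{w'\in\R^m : w'\leq 0,\ \inner{w'}{y}=0\}$. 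Writing $\So(x,y)=(\nabla f(x)+\nabla g(x)y,\,-g(x))$ and matching components, the first block gives $\nabla f(x)+\nabla g(x)y=0$ and the second gives $-g(x)+w'=0$ with $w'\leq 0$ and $\inner{w'}{y}=0$, i.e. $g(x)\leq 0$ and $\inner{g(x)}{y}=0$; together with $y\geq 0$ this is exactly \eqref{eq:kkt.p}, so (a) holds, and the converse is the same calculation read backwards. The equivalence (a) $\Leftrightarrow$ (c) is then immediate upon setting $w:=-g(x)\geq 0$: the equation $\So(x,y)-(0,w)=0$ splits as $\nabla f(x)+\nabla g(x)y=0$ and $-g(x)-w=0$, and the sign and complementarity conditions $y,w\geq 0$, $\inner{y}{w}=0$ translate verbatim into $y\geq 0$, $g(x)\leq 0$, $\inner{y}{g(x)}=0$.

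For (a) $\Leftrightarrow$ (b), recall that $(x,y)$ with $y\in\R^m_+$ is a saddle point of $\Lb$ over $\R^n\times\R^m_+$ iff it minimizes $\Lb(\cdot,y)$ over $\R^n$ and maximizes $\Lb(x,\cdot)$ over $\R^m_+$. Since $\Lb(\cdot,y)$ is convex and smooth, the first condition is equivalent to $\nabla_x\Lb(x,y)=\nabla f(x)+\nabla g(x)y=0$. Since $\Lb(x,\cdot)=f(x)+\inner{\cdot}{g(x)}$ is affine, the second condition is equivalent to the first-order optimality condition $-g(x)=-\nabla_y\Lb(x,y)\in N_{\R^m_+}(y)$, i.e. $y\geq 0$, $g(x)\leq 0$, and $\inner{y}{g(x)}=0$; combining, this is precisely \eqref{eq:kkt.p}. (Assumption O.3 guarantees the saddle-point set is nonempty, but the equivalence itself holds pointwise and needs no constraint qualification because all functions are finite-valued and the only constraint set is the orthant, on which the normal cone has the explicit description used above.)

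The main obstacle, such as it is, is the maximal monotonicity of the sum: one must be a little careful to invoke the correct form of the sum rule (using that $\So$ is single-valued and everywhere defined, so the domain qualification $\mathrm{int}\,D(\So)\cap D(\Nc_{\R^n\times\R^m_+})\neq\emptyset$ is trivially satisfied). The rest is bookkeeping with the explicit form of $N_{\R^m_+}(y)=\{w'\leq 0 : \inner{w'}{y}=0\}$ for $y\geq 0$, which drives all three equivalences in parallel.
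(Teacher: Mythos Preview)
The paper does not supply a proof of this proposition; it is presented as a review of standard facts, so there is no ``paper's approach'' to compare against. Your arguments for the equivalences (a)$\Leftrightarrow$(b)$\Leftrightarrow$(c)$\Leftrightarrow$(d) are correct and are exactly the intended unpacking of the normal cone $N_{\R^m_+}(y)=\{w'\leq 0:\inner{w'}{y}=0\}$ for $y\geq 0$.

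There is, however, a genuine gap in your maximal monotonicity argument. You assert that $\So$ is monotone (hence maximal monotone) on all of $\R^n\times\R^m$ because $\Lb$ is convex--concave, but you yourself note that $\Lb(\cdot,y)$ is convex only for $y\geq 0$. In fact $\So$ need \emph{not} be monotone on $\R^n\times\R^m$: take $n=m=1$, $f\equiv 0$, $g(x)=x^2$, so $\So(x,y)=(2xy,-x^2)$; at $(1,-1)$ and $(0,0)$ one gets $\inner{\So(1,-1)-\So(0,0)}{(1,-1)-(0,0)}=-1<0$. Consequently Rockafellar's sum rule, applied to $\So$ as a maximal monotone operator on the whole space plus $\Nc_{\R^n\times\R^m_+}$, is not available as stated.

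The standard fix is to bypass the sum rule entirely and invoke Rockafellar's saddle-function theory: the extended function $\overline{\mathscr{L}}$ of \eqref{eq:lg2} is a closed proper convex--concave saddle function on $\R^n\times\R^m$ (convex in $x$ for every $y$, since it equals $-\infty$ when $y\not\geq 0$; concave in $y$ for every $x$), and its associated saddle operator $\partial_x\overline{\mathscr{L}}(x,y)\times\partial_y(-\overline{\mathscr{L}})(x,y)$ is maximal monotone. A direct computation shows this operator coincides with $\So+\Nc_{\R^n\times\R^m_+}$. With this correction, your proof is complete.
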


Next we review some reformulations of 
the saddle-point problem in
\eqref{eq:pmm}.

\begin{proposition}
  \label{pr:rf.rsdp}
  Take $(\bpt{x},\bpt{y})\in \R^n\times\R^m$ and $\lambda>0$.
  The following conditions
  are equivalent:
  \begin{enumerate}
  \item[\emph{(a)}] \label{it:rsdp}
    $(x,y)$ is the solution of the
    regularized saddle-point problem\\
$\mbox{}\qquad\qquad\min_{x\in\R^n}\max_{y\in\R^m_+}\;
 f(x)+\inner{y}{g(x)}
+\dfrac{1}{2\lambda}
      \left(\norm{x-\bpt{x}}^2-\norm{y-\bpt{y}}^2\right)$;
  \item[\emph{(b)}] \label{it:rmcp}
    $(x,y)$ is the solution of the regularized 
complementarity problem
    \begin{align*}
      (x,y)\in \R^n\times\R^m;\;\;
      \lambda \bigg(\So(x,y)-(0,w)\bigg)+(x,y)-(\bpt{x},\bpt{y})=0;\;\;
      y,w\geq 0;\;\;\inner{y}{w}=0;
    \end{align*}
  \item[\emph{(c)}] \label{it:psp.2}
    $(x,y)=\left(\lambda (\So+\Nc_{\R^n\times\R^m_+})
  +\emph{I}\right)^{-1}
    (\bpt{x},\bpt{y})$.
  \end{enumerate}
\end{proposition}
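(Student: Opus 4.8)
The plan is to prove the three-way equivalence by establishing (a)$\,\Leftrightarrow\,$(c) and (b)$\,\Leftrightarrow\,$(c); this parallels the proof of Proposition~\ref{pr:rf.sdp}, of which the present statement is the proximally regularized counterpart. Throughout I would use that $\So+\Nc_{\R^n\times\R^m_+}$ is maximal monotone (Proposition~\ref{pr:rf.sdp}), so that by Minty's theorem the resolvent in (c) is a well-defined single-valued map with full domain; in particular (c) pins down $(x,y)$ uniquely, and it is enough to match the first-order conditions attached to (a) and (b) with the inclusion in (c). I would also record the elementary identity $\Nc_{\R^n\times\R^m_+}(x,y)=\{0\}^n\times\Nc_{\R^m_+}(y)$ and the fact that $\Nc_{\R^n\times\R^m_+}$ is a cone, so that multiplying by the scalar $\lambda>0$ leaves it unchanged.

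For (a)$\,\Leftrightarrow\,$(c): the objective $\Phi(x,y):=f(x)+\inner{y}{g(x)}+\frac{1}{2\lambda}(\norm{x-\bpt{x}}^2-\norm{y-\bpt{y}}^2)$ is $\mathscr{C}^1$, strongly convex in $x$ for fixed $y$, and strongly concave in $y$ for fixed $x$ (the term $\inner{y}{g(x)}$ is affine in $y$, and $-\frac{1}{2\lambda}\norm{y-\bpt{y}}^2$ is strongly concave), hence the min--max problem over $\R^n\times\R^m_+$ has a unique saddle point, and a point with $y\in\R^m_+$ is the saddle point if and only if it satisfies the first-order conditions $\nabla_x\Phi(x,y)=0$ and $0\in-\nabla_y\Phi(x,y)+\Nc_{\R^m_+}(y)$, which are necessary and, by convexity--concavity, also sufficient. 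A direct gradient computation gives $(\nabla_x\Phi(x,y),-\nabla_y\Phi(x,y))=\So(x,y)+\frac{1}{\lambda}\big((x,y)-(\bpt{x},\bpt{y})\big)$, so these conditions amount to $0\in\So(x,y)+\frac{1}{\lambda}((x,y)-(\bpt{x},\bpt{y}))+\Nc_{\R^n\times\R^m_+}(x,y)$; multiplying by $\lambda>0$ yields exactly $0\in\lambda(\So+\Nc_{\R^n\times\R^m_+})(x,y)+(x,y)-(\bpt{x},\bpt{y})$, i.e.\ (c).

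For (b)$\,\Leftrightarrow\,$(c): I would eliminate (resp.\ introduce) the slack $w$. Recall that for $y\ge 0$ one has $u\in\Nc_{\R^m_+}(y)$ iff $u\le 0$ and $\inner{u}{y}=0$, equivalently $-u\ge 0$ and $\inner{-u}{y}=0$. Assuming (b), the second block of $\lambda(\So(x,y)-(0,w))+(x,y)-(\bpt{x},\bpt{y})=0$ together with $y,w\ge 0$ and $\inner{y}{w}=0$ shows $-w\in\Nc_{\R^m_+}(y)$, hence $(0,-\lambda w)\in\Nc_{\R^n\times\R^m_+}(x,y)$, and the displayed equation rearranges to $\lambda\So(x,y)+(x,y)-(\bpt{x},\bpt{y})=-(0,-\lambda w)\in-\Nc_{\R^n\times\R^m_+}(x,y)$, which is (c). Conversely, from (c) there is $u\in\Nc_{\R^m_+}(y)$ with $\lambda\So(x,y)+(0,\lambda u)+(x,y)-(\bpt{x},\bpt{y})=0$; setting $w:=-u$ gives $w\ge 0$, $\inner{w}{y}=0$, and recovers the equation and complementarity conditions in (b).

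The routine ingredients are the quadratic gradient computation and the sign bookkeeping for the normal cone / complementarity reformulation; the only point that genuinely needs care is the claim that the first-order saddle-point conditions in the proof of (a)$\,\Leftrightarrow\,$(c) are both necessary and sufficient, which is precisely where the strong convexity--concavity injected by the proximal terms $\pm\frac{1}{2\lambda}\norm{\cdot}^2$ enters, together with the maximal monotonicity of $\So+\Nc_{\R^n\times\R^m_+}$ from Proposition~\ref{pr:rf.sdp} used to conclude uniqueness of the solution in (a), (b) and (c).
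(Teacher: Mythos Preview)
Your proof is correct and follows the standard route: identify the first-order saddle-point conditions of the strongly convex--concave regularized Lagrangian with the resolvent inclusion, and pass between the normal-cone form and the complementarity-slack form via the characterization of $\Nc_{\R^m_+}$. There is nothing to compare against, however: the paper states Proposition~\ref{pr:rf.rsdp} (like Proposition~\ref{pr:rf.sdp}) as a review of well-known reformulations and gives no proof of its own.
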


It follows from Propositions~\ref{pr:rf.sdp} 
and \ref{pr:rf.rsdp} that
\eqref{eq:kkt.p} is equivalent to 
the monotone inclusion problem
\[
0\in(\So+\Nc_{\R^n\times\R^m_+})(x,y)
\]
and that \eqref{eq:pmm} is the PPM iteration for this
inclusion problem.
Therefore, the convergence analysis
of the Rockafellar's PMM follows
from Rockafellar's convergence analysis of the PPM.

\section{An error measure for regularized saddle-point problems}
\label{sec:em.pp}

We will present a modification of Rockafellar's PMM which uses
approximate solutions of the regularized saddle-point problem
\eqref{eq:pmm} satisfying a relative error tolerance.
To that effect, in this section we define a generic instance of
problem~\eqref{eq:pmm},
define an error measure for approximate solutions of this
generic instance, and analyze some properties of the proposed error
measure.

Consider, for $\lambda> 0$ and $(\bpt{x},\bpt{y}) \in \R^n\times\R^m$, a generic
instance of the regularized saddle-point problem to be solved in each iteration
of Rockafellar's PMM,
\begin{align}
  \label{eq:g.rsdp}
  & \min_{x\in\R^n}\max_{y\in\R^m_+}
    \;f(x)+\inner{y}{g(x)}
   +\dfrac{1}{2\lambda}   (\norm{x-\bpt{x}}^2-\norm{y-\bpt{y}}^2).
\end{align}
Define
 for  $\lambda\in\R$ and $(\bpt{x},\bpt{y})\in\R^n\times\R^m$
\begin{align}
  \label{eq:Psi.1}
  \begin{split}
    &\Psi_{\So,(\bpt{x},\bpt{y}),\lambda}:\R^n\times\R^m_+\to\R,\\
    &\Psi_{\So,(\bpt{x},\bpt{y}),\lambda}(x,y)
    \begin{aligned}[t]
      &:= \min_{w\in\R^m_+}
  \Norm{\lambda
        \bigg(\So(x,y)-(0,w)\bigg)
        +(x,y)-(\bpt{x},\bpt{y})}^2
      +2\lambda\inner{y}{w}.
    \end{aligned}
  \end{split}
\end{align}
For $\lambda>0$, this function is 
trivially an error measure for the
complementarity problem on 
Proposition~\ref{pr:rf.rsdp} (b), 
a problem which is equivalent to~\eqref{eq:g.rsdp},
by Proposition \ref{pr:rf.rsdp} (a);
hence, $\Psi_{\So,(\bpt{x},\bpt{y}),\lambda}(x,y)$ is an error measure for
\eqref{eq:g.rsdp}.

In the context of complementarity problems, the quantity $\inner{y}{w}$ in
\eqref{eq:Psi.1} is refered to as the \emph{complementarity gap}. 
Next we show
that the complementarity gap is related to the 
$\varepsilon$-subdifferential of $\delta_{\R^n\times\R^m_+}$ and 
to the $\varepsilon $-enlargement of the normal cone
operator of $\R^n\times\R^m_+$.
Direct use of \eqref{eq:teps} and
of the definition of the $\varepsilon$-subdifferential~\cite{MR0178103} yields
%
\begin{align}
  \label{eq:npe}
  \begin{aligned}
    &\forall (x,y)\in \R^n\times\R^m_+,\;\varepsilon\geq 0\\
    &
\partial_\varepsilon\delta_{\R^n\times\R^m_+}(x,y)=
    \Nc_{\R^n\times\R^m_+}^{[\varepsilon]}(x,y)
    =
    \Set{-(0,w)\;|\; w\in\R^m_+,\; \inner{y}{w}\leq \varepsilon}.\qquad
  \end{aligned}
\end{align}

Since
\begin{align}
  \label{eq:tildew}
  \argmin_{w\in\R^m_+}
  \norm{\lambda
        \big(\So(x,y)-(0,w)\big)
        +(x,y)-(\bpt{x},\bpt{y})}^2
      +2\lambda\inner{y}{w}
 = (g(x)+\lambda^{-1}\bpt{y})_-,
\end{align}
it follows from definition \eqref{eq:Psi.1} that
\begin{align}
  \label{eq:Psi.2}
  \begin{split}
    &\Psi_{\So,(\bpt{x},\bpt{y}),\lambda}(x,y)
    \begin{aligned}[t]
      &=\norm{\lambda (\nabla f(x)+\nabla g(x)y)
        +x-\bpt{x}}^2 +\norm{y-(\lambda g(x)+\bpt{y})_+}^2\\
      &+2\inner{y}{ (\lambda g(x)+\bpt{y})_-}\\
      &=\Norm{\lambda\So(x,y)+(x,y)-(\bpt{x},\bpt{y})}^2
      -\norm{(\lambda g(x)+\bpt{y})_-}^2\,
    \end{aligned}
  \end{split}
\end{align}
for any $(x,y)\in\R^n\times\R^m_+$.
%
%

\begin{lemma}
  \label{lm:loc3}
  If $\lambda>0$, $\bpt{z}=(\bpt{x},\bpt{y})\in\R^n\times\R^m$, 
  $z=(x,y)\in\R^m\times\R^m_+$ and $w,v,\varepsilon$ are 
  defined as
  \[
  w:=(g(x)+\lambda^{-1}\bpt{y})_-,\qquad
  v:=\So(z)-(0,w),\qquad
  \varepsilon:=\inner{y}{w},
  \]
  then
  \begin{enumerate}
  \item[\emph{(a)}]
    \label{it:psi.1}
    $-(0,w)\in \partial_{\varepsilon} \delta_{\R^n\times\R^m_+} (z)=
    \Nc_{\R^n\times\R^m_+}^{[\varepsilon]}(z)$;
  \item[\emph{(b)}]
    \label{it:psi.2}
     $v\in(\So+\Nc_{\R^n\times\R^m_+}^{[\varepsilon]})(z)\subset
    (\So+\Nc_{\R^n\times\R^m_+})^{[\varepsilon]}(z)$,
    $\norm{\lambda v+z-\bpt{z}}^2+2\lambda\varepsilon=
    \Psi_{\So,\bpt{z},\lambda}(z)$;
  \item[\emph{(c)}]
    \label{it:psi.4}
    $\Norm{z-\big(\lambda(\So+\Nc_{\R^n\times\R^m_+})+I\big)^{-1}(\bpt{z})}
  \leq\sqrt{\Psi_{\So,\bpt{z},\lambda}(z)}$.
  \end{enumerate}
\end{lemma}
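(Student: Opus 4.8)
The plan is to obtain all three items by unwinding definitions and combining the identities already recorded in~\eqref{eq:npe}, \eqref{eq:tildew} and \eqref{eq:Psi.1} with Lemma~\ref{lm:d}; almost nothing beyond bookkeeping is needed.

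For item~(a), I would first note that $w=(g(x)+\lambda^{-1}\bpt{y})_-$ is a negative part, hence $w\in\R^m_+$, and that since $z=(x,y)$ has $y\ge 0$ we get $\varepsilon=\inner{y}{w}\ge 0$. The inequality $\inner{y}{w}\le\varepsilon$ holds (trivially, with equality), so the explicit description of the $\varepsilon$-enlargement of the normal cone operator in~\eqref{eq:npe} yields at once $-(0,w)\in\partial_\varepsilon\delta_{\R^n\times\R^m_+}(z)=\Nc_{\R^n\times\R^m_+}^{[\varepsilon]}(z)$.

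For item~(b), write $v=\So(z)+(-(0,w))$; since $\So$ is point-to-point and $-(0,w)\in\Nc_{\R^n\times\R^m_+}^{[\varepsilon]}(z)$ by item~(a), this gives $v\in(\So+\Nc_{\R^n\times\R^m_+}^{[\varepsilon]})(z)$. For the inclusion $(\So+\Nc_{\R^n\times\R^m_+}^{[\varepsilon]})(z)\subseteq(\So+\Nc_{\R^n\times\R^m_+})^{[\varepsilon]}(z)$ I would argue directly from~\eqref{eq:teps}: for an arbitrary $(z',v')\in\So+\Nc_{\R^n\times\R^m_+}$, write $v'=\So(z')+n'$ with $n'\in\Nc_{\R^n\times\R^m_+}(z')$, so that $z'\in\R^n\times\R^m_+$, and split $\inner{z-z'}{v-v'}=\inner{z-z'}{\So(z)-\So(z')}+\inner{z-z'}{-(0,w)-n'}$. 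The first summand is nonnegative because $\So$ is monotone on $\R^n\times\R^m_+$ (a one-line consequence of the gradient inequalities for the convex functions $f,g_1,\dots,g_m$) and both $z,z'$ lie in that set; the second summand is $\ge-\varepsilon$ by item~(a) together with definition~\eqref{eq:teps} applied to the pair $(z',n')\in\Nc_{\R^n\times\R^m_+}$. Hence $v\in(\So+\Nc_{\R^n\times\R^m_+})^{[\varepsilon]}(z)$. The claimed equality is then pure substitution: $w$ is precisely the minimizer exhibited in~\eqref{eq:tildew}, so plugging $v=\So(z)-(0,w)$ and $\varepsilon=\inner{y}{w}$ into definition~\eqref{eq:Psi.1} gives $\Psi_{\So,\bpt{z},\lambda}(z)=\norm{\lambda v+z-\bpt{z}}^2+2\lambda\varepsilon$.

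For item~(c), apply Lemma~\ref{lm:d} with $T:=\So+\Nc_{\R^n\times\R^m_+}$ (maximal monotone by Proposition~\ref{pr:rf.sdp}), with $\bpt{z}$, $\lambda$, and the triple $(z,v,\varepsilon)$ from item~(b), which satisfies $v\in T^{[\varepsilon]}(z)$ and $\varepsilon\ge0$: this yields $\norm{\lambda v+z-\bpt{z}}^2+2\lambda\varepsilon\ge\Norm{z-(\lambda T+I)^{-1}\bpt{z}}^2$, and combining with the equality in item~(b) and taking square roots gives the bound. The only step that is not purely formal is the monotonicity of $\So$ on $\R^n\times\R^m_+$ used in item~(b), and even that is immediate from convexity; so I expect no real obstacle here.
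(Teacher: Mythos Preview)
Your proposal is correct and follows essentially the same route as the paper's own proof: item~(a) from the definitions and~\eqref{eq:npe}, item~(b) from item~(a), the definition of $v$, a direct verification of the $\varepsilon$-enlargement inclusion via~\eqref{eq:teps}, and the identity from~\eqref{eq:Psi.1} and~\eqref{eq:tildew}, and item~(c) from item~(b) and Lemma~\ref{lm:d}. The only place you add detail beyond the paper is in spelling out the monotonicity argument for the second inclusion in~(b), which the paper leaves as ``direct calculations''; your observation that both $z$ and $z'$ lie in $\R^n\times\R^m_+$ so that the convex-concave structure of $\mathscr{L}$ on that set suffices is exactly the right point.
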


\begin{proof}
  Item (a) follows trivially from the 
  definitions of $w$ and
  $\varepsilon$, and \eqref{eq:npe}. 
 The first inclusion in
  item (b) follows from
 the definition of $v$ and item (a); the second
  inclusion follows from
 direct calculations and \eqref{eq:teps}; the 
 identity in item (b)
 follows from the definitions of $w$ and $\varepsilon$,
  \eqref{eq:Psi.1} and \eqref{eq:tildew}. 
 Finally, item (c) follows
  from item (b) and Lemma~\ref{lm:d}.
\end{proof}

Now we will show how to 
update $\lambda$ so that
$\Psi_{\So,\bpt{z},\lambda}(x,y)$ does not 
increase when $\bpt{z}$ is
updated like $z_{k-1}$ is updated to $z_{k}$ in \eqref{eq:rhpe}.

\begin{proposition}
  \label{pr:relax2}
  Suppose that $\lambda >0$,
  $\bpt{z}=(\bpt{x},\bpt{y})\in\R^n\times\R^m$,
  ${z}=({x},{y})\in\R^n\times\R^m_+$ and
  define
  \begin{align*}
    w&:=(g({x})+\lambda^{-1}\bpt{y})_-,\qquad 
       v:=\So({z})-(0,w),\qquad
   \bpt{z}(\tau):=\bpt{z}-\tau \lambda v=
 :(\bpt{x}(\tau),\bpt{y}(\tau)).
  \end{align*}
  For any $\tau\in[0,1]$,
  \begin{align*}
    &\bpt{x}(\tau)=\bpt{x}-\tau\lambda(\nabla f({x})+\nabla g({x}){y}),\;\;
    \bpt{y}(\tau)=  \bpt{y}+\tau\lambda( g({x})+
    ( g({x})+\lambda^{-1}\bpt{y})_-)\,,\;\;\\
    &\Psi_{\So,\bpt{z}(\tau),(1-\tau)\lambda}({z})
    \leq \Psi_{\So,\bpt{z},\lambda}({z}).
  \end{align*}
  If, additionally, $\bpt{y}\geq 0$ then, 
 for any $\tau\in[0,1]$,
  $\bpt{y}(\tau)\geq 0$.
\end{proposition}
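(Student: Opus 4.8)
The plan is to dispatch the two coordinate formulas by direct substitution, the nonnegativity claim $\bpt{y}(\tau)\geq 0$ by an elementary componentwise case analysis, and to spend the real effort on the monotonicity inequality for $\Psi$, which I will obtain by exploiting that $\Psi_{\So,\bpt{z}(\tau),(1-\tau)\lambda}(z)$ is defined as a minimum over a multiplier and that the prox-residual is invariant under the simultaneous update of the center $\bpt{z}$ and the stepsize $\lambda$.

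First I would expand $\So(z)=(\nabla f(x)+\nabla g(x)y,\,-g(x))$, so that $v=\So(z)-(0,w)=(\nabla f(x)+\nabla g(x)y,\,-g(x)-w)$ with $w=(g(x)+\lambda^{-1}\bpt{y})_-$; plugging this into $\bpt{z}(\tau)=\bpt{z}-\tau\lambda v$ and reading off the two blocks yields the asserted expressions for $\bpt{x}(\tau)$ and $\bpt{y}(\tau)$, using $-(-g(x)-w)=g(x)+(g(x)+\lambda^{-1}\bpt{y})_-$. For the last statement, I would assume $\bpt{y}\geq 0$, fix $\tau\in[0,1]$ and a coordinate $i$, and set $a:=g_i(x)$, $b:=\bpt{y}_i\geq 0$, so that $\bpt{y}_i(\tau)=b+\tau\lambda\big(a+(a+b/\lambda)_-\big)$. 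If $a+b/\lambda\geq 0$ then $(a+b/\lambda)_-=0$ and $\lambda a\geq -b$, whence $\bpt{y}_i(\tau)=b+\tau\lambda a\geq b-\tau b=(1-\tau)b\geq 0$; if $a+b/\lambda<0$ then $a+(a+b/\lambda)_-=a-(a+b/\lambda)=-b/\lambda$, whence $\bpt{y}_i(\tau)=b-\tau b=(1-\tau)b\geq 0$. Hence $\bpt{y}(\tau)\geq 0$.

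The core is the inequality $\Psi_{\So,\bpt{z}(\tau),(1-\tau)\lambda}(z)\leq\Psi_{\So,\bpt{z},\lambda}(z)$. By the definition \eqref{eq:Psi.1}, the left-hand side is the minimum over $w'\in\R^m_+$ of $\norm{(1-\tau)\lambda(\So(z)-(0,w'))+z-\bpt{z}(\tau)}^2+2(1-\tau)\lambda\inner{y}{w'}$, so it is bounded above by the value obtained at the feasible (in general suboptimal) choice $w'=w\geq 0$. For that choice the vector inside the norm is $(1-\tau)\lambda v+z-\bpt{z}(\tau)=(1-\tau)\lambda v+z-\bpt{z}+\tau\lambda v=\lambda v+z-\bpt{z}$ --- the prox-residual is left unchanged by the joint update --- while the complementarity term is $2(1-\tau)\lambda\inner{y}{w}\leq 2\lambda\inner{y}{w}$ since $\tau\in[0,1]$ and $\inner{y}{w}\geq 0$ (because $y\geq 0$ and $w\geq 0$). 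Invoking Lemma~\ref{lm:loc3}(b), namely $\norm{\lambda v+z-\bpt{z}}^2+2\lambda\inner{y}{w}=\Psi_{\So,\bpt{z},\lambda}(z)$, completes the argument; the degenerate case $\tau=1$ is included, as then $(1-\tau)\lambda=0$ and $\Psi_{\So,\bpt{z}(1),0}(z)=\norm{z-\bpt{z}(1)}^2=\norm{\lambda v+z-\bpt{z}}^2$. I expect the only genuinely delicate point to be recognizing the residual-invariance identity and justifying that one may test the minimum defining $\Psi_{\So,\bpt{z}(\tau),(1-\tau)\lambda}(z)$ with the non-optimal multiplier $w$ rather than its own minimizer; once those are in place, everything else is a one-line computation.
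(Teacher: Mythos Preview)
Your proposal is correct and follows essentially the same route as the paper: the coordinate formulas by direct substitution, and the $\Psi$-inequality by testing the minimum in \eqref{eq:Psi.1} at the (possibly suboptimal) multiplier $w$, using the residual-invariance identity $(1-\tau)\lambda v+z-\bpt{z}(\tau)=\lambda v+z-\bpt{z}$ together with $(1-\tau)\inner{y}{w}\leq\inner{y}{w}$. The only cosmetic difference is in the nonnegativity claim: you do a componentwise case split, whereas the paper observes more succinctly that $\bpt{y}(\tau)$ is affine in $\tau$, hence a convex combination of $\bpt{y}\geq 0$ and $\bpt{y}(1)=(\lambda g(x)+\bpt{y})_+\geq 0$ (using $t+t_-=t_+$).
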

\begin{proof}
  Direct use of the definitions of $w$, $v$ and
  $\bpt{z}(\tau)$ yields the expressions 
  for $\bpt{x}(\tau)$ and $\bpt{y}(\tau)$
  as well as the identity
 \begin{align*}
    (1-\tau)\lambda
        \left(\So({z})-(0,w)\right)
    +{z}-\bpt{z}(\tau)=
    \lambda
        \left(\So({z})-(0,w)\right)
        +{z}-\bpt{z},
  \end{align*}
 which, in turn, combined with \eqref{eq:Psi.1}
 gives, for any
 $\tau \in[0,1]$,
  \begin{align*}
    \Psi_{\So,\bpt{z}(\tau),(1-\tau)\lambda}({z})
    & \leq   \norm{(1-\tau)\lambda
        \left(\So({z})-(0,w)\right)
        +{z}-\bpt{z}(\tau)}^2
      +2(1-\tau)\lambda\inner{{y}}{w}
\\
    & =   \norm{\lambda
        \left(\So({z})-(0,w)\right)
        +{z}-\bpt{z}}^2
      +2(1-\tau)\lambda\inner{{y}}{w}\\
    &\leq \Psi_{\So,\bpt{z},\lambda}({z}),
  \end{align*}
  where the second inequality follows 
 from \eqref{eq:Psi.1}, \eqref{eq:tildew}, 
 the assumption $\tau \in[0,1]$ and the definition of
  $ w$. 
  To prove the second part of the proposition, 
  observe that, for any
  $\tau\in[0,1]$, $\bpt{y}(\tau)$ is a convex combination of $\bpt{y}$ and
  $\bpt{y}(1)=(\lambda g({x})+\bpt{y})_+$.
\end{proof}

The next lemma and the next proposition
provide quantitative and qualitative
estimations of the dependence of
$\Psi_{\So,(\bpt{x},\bpt{y}),\lambda}(x,y)$ on $\lambda$.

\begin{lemma}
  \label{lm:ef}
  If $\psi(\lambda):=\Psi_{\So,\bpt{z},\lambda}(z)$ for $\lambda\in\R$, where
 $\bpt{z}=(\bpt{x},\bpt{y})\in\R^n\times\R^m$,
  and $z=(x,y) \in \R^n \times \R^m_+$,
   then
  \begin{enumerate}
  \item[\emph{(a)}] $\psi$ is
    convex, differentiable and piecewise quadratic;
  \item[\emph{(b)}]
    $\dfrac{d}{d\lambda}\psi(\lambda)
    =2\left(\Inner{\So(z)} {\lambda(\So(z)-(0,w))+z-\bpt{z}}\right)$
    where $w=(g(x)+\lambda^{-1}\bpt{y})_-$\;;
  \item[\emph{(c)}]
    $\psi(\lambda) \leq (\norm{\lambda \So(z)}
      +\norm{z-\bpt{z}})^2$;
    \item[\emph{(d)}]
  $\lim_{\lambda\to\infty}\psi(\lambda)<\infty$ if and only
      if $(x,y)$ is a solution of \eqref{eq:kkt.p}.
    \end{enumerate}
  \end{lemma}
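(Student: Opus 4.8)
The plan is to start from the closed-form expression for $\psi$ furnished by \eqref{eq:Psi.2}, namely
\[
\psi(\lambda)=\norm{\lambda(\nabla f(x)+\nabla g(x)y)+x-\bpt{x}}^2+\norm{y-(\lambda g(x)+\bpt{y})_+}^2+2\inner{y}{(\lambda g(x)+\bpt{y})_-},
\]
and to treat each $\R$-coordinate of the vector $\lambda g(x)+\bpt{y}$ separately. For a fixed $x,y,\bpt{z}$, the map $\lambda\mapsto\lambda g_i(x)+\bpt{y}_i$ is affine, so $\R$ splits into at most two half-lines (or all of $\R$, if $g_i(x)=0$) on which the sign of $\lambda g_i(x)+\bpt{y}_i$ is constant; on each such piece $(\lambda g_i(x)+\bpt{y}_i)_+$ and $(\lambda g_i(x)+\bpt{y}_i)_-$ are affine in $\lambda$. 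Intersecting these finitely many breakpoints over $i=1,\dots,m$ partitions $\R$ into finitely many intervals on each of which every term above is a quadratic polynomial in $\lambda$; this gives the piecewise-quadratic claim in (a). Convexity of $\psi$ is inherited from the fact that $\Psi_{\So,\bpt{z},\lambda}(z)$ is, by \eqref{eq:Psi.1}, an infimum over $w\in\R^m_+$ of functions jointly convex in $(\lambda,w)$ — each summand $\|\lambda(\So(z)-(0,w))+z-\bpt{z}\|^2$ is a composition of a squared norm with an affine map of $(\lambda,w)$, hence convex, and $2\lambda\inner{y}{w}$ is bilinear but becomes convex on the relevant region; a cleaner route is to note from \eqref{eq:Psi.2} that $\psi(\lambda)=\|\lambda\So(z)+z-\bpt{z}\|^2-\|(\lambda g(x)+\bpt{y})_-\|^2$ and argue convexity coordinatewise, since $t\mapsto (a t+b)^2 - ((ct+d)_-)^2$ with the first square dominating is convex; I would double-check this reduction, as this is the point most likely to need care.

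For differentiability (still part (a)), the only candidates for nonsmoothness are the breakpoints $\lambda$ where some $\lambda g_i(x)+\bpt{y}_i=0$. At such a $\lambda$ the $i$-th contributions to $\psi$ are $\norm{y_i-(\lambda g_i(x)+\bpt{y}_i)_+}^2+2 y_i(\lambda g_i(x)+\bpt{y}_i)_-$; approaching from the side where $\lambda g_i(x)+\bpt{y}_i\geq 0$ the derivative of this term is $-2 g_i(x)(y_i-(\lambda g_i(x)+\bpt{y}_i))$, which at the breakpoint equals $-2 g_i(x)y_i$, while from the side where $\lambda g_i(x)+\bpt{y}_i\leq 0$ the term is $\|y_i\|^2+2y_i(\lambda g_i(x)+\bpt{y}_i)$ with derivative $2 g_i(x)y_i$ — wait, these disagree unless $g_i(x)y_i=0$, so one must instead observe that at the breakpoint $\lambda g_i(x)+\bpt{y}_i=0$ forces the positive-side derivative to be $-2g_i(x)(y_i-0)=-2g_i(x)y_i$ and the negative side gives $2g_i(x)(y_i+(\lambda g_i(x)+\bpt y_i))\big|=2g_i(x)y_i$; reconciling this requires using that the $x$-block term $\norm{\lambda(\nabla f(x)+\nabla g(x)y)+x-\bpt x}^2$ is smooth everywhere, so the claimed smoothness of $\psi$ must come from cancellation — indeed the correct computation is that of item (b): since $w=(g(x)+\lambda^{-1}\bpt y)_-$ is the argmin and the envelope theorem applies (the minimand in \eqref{eq:Psi.1} is smooth in $\lambda$ for fixed $w$), one gets $\psi'(\lambda)=\partial_\lambda[\,\norm{\lambda(\So(z)-(0,w))+z-\bpt z}^2+2\lambda\inner{y}{w}\,]$ evaluated at $w=w(\lambda)$, and $\lambda w(\lambda)=(\lambda g(x)+\bpt y)_-$ is actually differentiable in $\lambda$ off the breakpoints with the awkward terms reorganizing so that $2\inner{\So(z)}{\lambda(\So(z)-(0,w))+z-\bpt z}$ is continuous across breakpoints (there $\inner{(0,w)}{\cdot}$ contributions vanish because $w_i=0$ exactly when $\lambda g_i(x)+\bpt y_i=0^+$). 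I would write this out carefully: define $h(\lambda,w):=\norm{\lambda(\So(z)-(0,w))+z-\bpt z}^2+2\lambda\inner{y}{w}$, verify by Danskin/envelope that $\psi'(\lambda)=\partial_\lambda h(\lambda,w(\lambda))$, compute $\partial_\lambda h=2\inner{\So(z)-(0,w)}{\lambda(\So(z)-(0,w))+z-\bpt z}+2\inner{y}{w}$, and then check that the $w$-dependent pieces combine to $0$ at the point $w=w(\lambda)$ because of the complementarity-type identity $\inner{(0,w)}{\lambda(\So(z)-(0,w))+z-\bpt z}=\lambda\inner{y}{w}$ coming from \eqref{eq:tildew}, leaving exactly the formula in (b). Continuity of this expression in $\lambda$ (including at breakpoints, where $w_i(\lambda)$ is continuous, being $(g_i(x)+\lambda^{-1}\bpt y_i)_-$) then gives differentiability of $\psi$, finishing (a).

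For (c), bound $\psi$ from above: from \eqref{eq:Psi.2}, $\psi(\lambda)=\norm{\lambda\So(z)+z-\bpt z}^2-\norm{(\lambda g(x)+\bpt y)_-}^2\leq\norm{\lambda\So(z)+z-\bpt z}^2\leq(\norm{\lambda\So(z)}+\norm{z-\bpt z})^2$ by the triangle inequality. For (d), use (c) together with the piecewise-quadratic structure: as $\lambda\to\infty$, each coordinate of $\lambda g_i(x)+\bpt y_i$ eventually has constant sign determined by $\operatorname{sign}(g_i(x))$ (with the $g_i(x)=0$ coordinates contributing a bounded term $\norm{y_i-(\bpt y_i)_+}^2+2y_i(\bpt y_i)_-$), so for large $\lambda$
\[
\psi(\lambda)=\norm{\lambda(\nabla f(x)+\nabla g(x)y)+x-\bpt x}^2+\sum_{g_i(x)>0}\norm{y_i-\lambda g_i(x)-\bpt y_i}^2+\sum_{g_i(x)\le 0}\big(\dots\big),
\]
which stays bounded as $\lambda\to\infty$ if and only if the coefficient of $\lambda$ in every unbounded term vanishes, i.e. $\nabla f(x)+\nabla g(x)y=0$ and $g_i(x)\le 0$ for all $i$ (so no term of the second sum is genuinely present, forcing $g_i(x)=0$ whenever the associated term would otherwise blow up — but actually $g_i(x)>0$ makes $\norm{y_i-\lambda g_i(x)-\bpt y_i}^2\to\infty$ since $y_i\geq 0$ is fixed, so boundedness forces $g(x)\le 0$), and then on the remaining bounded terms boundedness is automatic while the complementarity $\inner{y}{g(x)}=0$ is already implicit; conversely if $(x,y)$ solves \eqref{eq:kkt.p} then $\So(z)$'s first block is $0$, $g(x)\le0$ kills the second block's growth, and $\psi(\lambda)$ converges to a finite limit. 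The main obstacle is the bookkeeping in part (a) — proving $\psi$ is $\mathscr C^1$ across the breakpoints — which I would handle via the envelope-theorem reduction to formula (b) rather than by brute-force matching of one-sided derivatives.
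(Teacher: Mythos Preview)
The paper's own proof is a single line (``follows trivially from \eqref{eq:Psi.2}''), and your plan is indeed the same route: work coordinatewise from the explicit formula \eqref{eq:Psi.2}. However, a sign slip in your one-sided derivative computation sent you on an unnecessary detour, and two of your side arguments are not correct as written.

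\emph{Differentiability.} On the side $u_i:=\lambda g_i(x)+\bpt y_i\le 0$ you have $(u_i)_-=-u_i$, so the $i$-th contribution is $y_i^2+2y_i(u_i)_-=y_i^2-2y_iu_i$, with $\lambda$-derivative $-2y_ig_i(x)$, which \emph{does} match the limit $-2g_i(x)y_i$ from the $u_i\ge 0$ side. With this correction the one-sided derivatives agree at every breakpoint and $\psi\in C^1$; the envelope-theorem argument is then superfluous. (If you do keep it, the complementarity identity you need is $\inner{(0,w)}{\lambda(\So(z)-(0,w))+z-\bpt z}=\inner{y}{w}$, not $\lambda\inner{y}{w}$; with this fix the $w$-terms cancel and (b) drops out.)

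\emph{Convexity.} Your first argument is incorrect: the minimand in \eqref{eq:Psi.1} is \emph{not} jointly convex in $(\lambda,w)$, because $\lambda(\So(z)-(0,w))$ contains the bilinear term $\lambda w$, so partial minimization does not apply. Your second route from \eqref{eq:Psi.2} is the right one and can be finished cleanly: on $u_i\le 0$ the $i$-th piece $(y_i-u_i)^2-u_i^2=y_i^2-2y_iu_i$ is affine in $\lambda$, on $u_i\ge 0$ it is $(y_i-u_i)^2$, a convex quadratic; together with the $C^1$-matching above each piece is convex, and the $x$-block is a convex quadratic, so $\psi$ is convex.

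\emph{Part (d).} Your treatment omits the step that actually yields complementarity. For large $\lambda$ and $g_i(x)<0$ the $i$-th contribution becomes $y_i^2-2y_i(\lambda g_i(x)+\bpt y_i)$, which grows like $-2\lambda y_ig_i(x)\ge 0$ and is bounded only if $y_i=0$. Together with $g(x)\le 0$ (forced as you noted) and $y\ge 0$, this gives $\inner{y}{g(x)}=0$; the converse is immediate since all $\lambda$-coefficients then vanish.
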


\begin{proof}
  The proof follows trivially from \eqref{eq:Psi.2}.
\end{proof}


\begin{proposition}
  \label{pr:linc}
  If $\bpt{z}=(\bpt{x},\bpt{y})\in\R^n\times\R^m$, 
  $z=({x},{y})\in \R^n\times\R^m_+$
  and $0<\mu\leq\lambda$ then
  \begin{align*}
    \sqrt{\Psi_{\So,\bpt{z},\lambda}(z)}
    \leq \dfrac{\lambda}{\mu} 
    \sqrt{\Psi_{\So,\bpt{z},\mu}(z)}
    +\dfrac{\lambda-\mu}{\mu}
  \norm{z-\bpt{z}}.
  \end{align*}
\end{proposition}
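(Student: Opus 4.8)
The plan is to bound $\Psi_{\So,\bpt{z},\lambda}(z)$ from above by evaluating the minimand in \eqref{eq:Psi.1} at a \emph{feasible but non-optimal} $w$ --- namely the optimal $w$ for the smaller parameter $\mu$ --- and then to compare the result with $\Psi_{\So,\bpt{z},\mu}(z)$ by tracking how that minimand depends on the scaling parameter. First I would set $\bar{w}:=(g(x)+\mu^{-1}\bpt{y})_-\in\R^m_+$; by \eqref{eq:tildew} this $\bar{w}$ attains the minimum defining $\Psi_{\So,\bpt{z},\mu}(z)$, so that
\begin{align*}
  \Psi_{\So,\bpt{z},\mu}(z)=\Norm{\mu\big(\So(z)-(0,\bar{w})\big)+z-\bpt{z}}^2+2\mu\inner{y}{\bar{w}},
\end{align*}
and $\inner{y}{\bar{w}}\geq 0$ since $y\geq 0$ and $\bar{w}\geq 0$. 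Because $\bar{w}\in\R^m_+$ irrespective of the scaling parameter, $\bar{w}$ is also feasible for the minimization defining $\Psi_{\So,\bpt{z},\lambda}(z)$, and hence
\begin{align*}
  \Psi_{\So,\bpt{z},\lambda}(z)\leq \Norm{\lambda\big(\So(z)-(0,\bar{w})\big)+z-\bpt{z}}^2+2\lambda\inner{y}{\bar{w}}.
\end{align*}

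Writing $v:=\So(z)-(0,\bar{w})$ and $B:=z-\bpt{z}$, the key is the identity $\lambda v+B=\tfrac{\lambda}{\mu}(\mu v+B)-\tfrac{\lambda-\mu}{\mu}B$, which with the triangle inequality gives $\norm{\lambda v+B}\leq \tfrac{\lambda}{\mu}\norm{\mu v+B}+\tfrac{\lambda-\mu}{\mu}\norm{B}$, while $2\lambda\inner{y}{\bar{w}}=\tfrac{\lambda}{\mu}\,(2\mu\inner{y}{\bar{w}})$. I would then abbreviate $a:=\norm{\mu v+B}$, $b:=2\mu\inner{y}{\bar{w}}\geq 0$, $c:=\lambda/\mu\geq 1$ and $s:=\tfrac{\lambda-\mu}{\mu}\norm{B}\geq 0$, so that $\Psi_{\So,\bpt{z},\mu}(z)=a^2+b$ and, by the two displays above, $\Psi_{\So,\bpt{z},\lambda}(z)\leq (ca+s)^2+cb$. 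The proof then closes with the scalar inequality
\begin{align*}
  \big(c\sqrt{a^2+b}+s\big)^2-\big[(ca+s)^2+cb\big]=c(c-1)\,b+2cs\big(\sqrt{a^2+b}-a\big)\geq 0,
\end{align*}
valid because $c\geq 1$, $b\geq 0$, $s\geq 0$ and $\sqrt{a^2+b}\geq a$: it yields $\Psi_{\So,\bpt{z},\lambda}(z)\leq\big(c\sqrt{\Psi_{\So,\bpt{z},\mu}(z)}+s\big)^2$, and taking square roots gives exactly the stated bound.

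There is no deep obstacle here. The one step that needs a moment's thought is the decision to substitute the $\mu$-optimal multiplier $\bar{w}$ into the $\lambda$-minimization --- legitimate precisely because the constraint set $\R^m_+$ in \eqref{eq:Psi.1} does not depend on the scaling parameter --- and then the mildly non-obvious quadratic bookkeeping that absorbs the cross term $2cas$. The points to be careful with are the non-negativity facts $\inner{y}{\bar{w}}\geq 0$, $c\geq 1$ and $s\geq 0$ (all immediate from $z\in\R^n\times\R^m_+$ and $0<\mu\leq\lambda$), since the final inequality rests entirely on them.
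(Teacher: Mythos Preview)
Your proof is correct and follows essentially the same approach as the paper: both substitute the $\mu$-optimal $w=(g(x)+\mu^{-1}\bpt{y})_-$ into the $\lambda$-minimization, use the identity $\lambda v+B=\tfrac{\lambda}{\mu}(\mu v+B)-\tfrac{\lambda-\mu}{\mu}B$, and then complete the square. The only cosmetic difference is that the paper expands $\norm{r_\lambda}^2$ directly and bounds the cross term to obtain a perfect square, whereas you first apply the triangle inequality and then verify the scalar inequality $(c\sqrt{a^2+b}+s)^2\geq (ca+s)^2+cb$; the content is the same.
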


\begin{proof}
  Let $w:=(g({x})+\mu^{-1}\bpt{y})_-$ and
  \begin{align*}
    r_\mu:=\mu\bigg(\So({z})-(0,w)\bigg)+{z}-\bpt{z},\;\;
    r_\lambda:=\lambda\bigg(\So({z})-(0,w)\bigg)+{z}-\bpt{z}.
  \end{align*}
  It follows from the latter definitions, 
 \eqref{eq:Psi.1} and
  \eqref{eq:tildew} that
  $\Psi_{\So,\bpt{z},\mu}({z})=\norm{r_\mu}^2+
  2\mu\inner{{y}}{w}$ and
  \begin{align*}
    \Psi_{\So,\bpt{z},\lambda}({z})
 \leq \norm{r_\lambda}^2+2\lambda\inner{{y}}{w}
    &=\Norm{\dfrac{\lambda}{\mu}r_\mu+\dfrac{\mu-\lambda}{\mu}
      ({z}-\bpt{z})}^2
      +\dfrac{\lambda}{\mu}2\mu\inner{{y}}{w}\\
    & \leq \left(\dfrac{\lambda}{\mu}\right)^2
    \left(\norm{r_\mu}^2+
      2\mu\inner{{y}}{w}\right)+
      2\dfrac{\lambda}{\mu}\dfrac{\lambda-\mu}{\mu}
      \norm{r_\mu}\norm{{z}-\bpt{z}}\\
			&\;\;\;\;+\left(\dfrac{\lambda-\mu}{\mu}\norm{{z}-\bpt{z}}\right)^2\\
    & \leq \left(\dfrac{\lambda}{\mu}\right)^2
      \Psi_{\So,\bpt{z},\mu}({z})+2\dfrac{\lambda}{\mu}
      \sqrt{\Psi_{\So,\bpt{z},\mu}({z})}\dfrac{\lambda-\mu}{\mu}\norm{{z}-\bpt{z}}\\
      &\;\;\;\;+\left(\dfrac{\lambda-\mu}{\mu}\norm{{z}-\bpt{z}}
      \right)^2, 
  \end{align*}
  where the first inequality follows from the assumption
  $0<\mu\leq\lambda$. 
  The conclusion follows trivially from the
  latter inequality.
\end{proof}

\section{Quadratic approximations of the smooth convex
  programming problem}
\label{sec:qa}

In this section we use second-order 
approximations of $f$ and $g$ around a point
$\tilde x\in\R^n$ to define a second-order 
approximation of
problem~\eqref{eq:np} around such a point.
We also define a local model of~\eqref{eq:pmm}, 
where second-order
approximations of $f$ and $g$ around $\tilde x$ substitute these functions,
and give conditions on a point 
$(\tilde x,\tilde y)$ under which a solution of
the local model is a better approximation to the solution of~\eqref{eq:pmm}
than this point.

For $\tilde x\in \R^n$, let $f_{[\tilde x]}$ and
$g_{[\tilde x]} = (g_{1,[\tilde x]}, \ldots ,g_{m,[\tilde x]})$
be the quadratic approximations of $f$ and $g=(g_1,\ldots,g_m)$ around
$\tilde x$, that is,
\begin{align}
  \label{eq:qpfg}
  \begin{split}
    f_{[\tilde x]}(x) &:= f(\tilde x)+
    \nabla f(\tilde x)^T(x-\tilde x)
    +\dfrac12(x-\tilde x)^T\nabla^2f(\tilde x)(x-\tilde x)\\
    g_{i,[\tilde x]}(x) &:=g_i(\tilde x)+\nabla
    g_i(\tilde x)^T(x-\tilde x)
    +\dfrac12(x-\tilde x)^T\nabla^2g_i(\tilde x)
   (x-\tilde x),\quad 
    i=1,\ldots,m.
    \end{split}
\end{align}
We define 
\begin{align}
  \label{eq:np2.xi}
  (P_{[\,\tilde x\,]})\qquad
  \min& f_{[\tilde x]}(x)\qquad
  \mathrm{s.t.}\;\; g_{[\tilde x]}(x)\leq 0
\end{align}
as the quadratic approximation of problem~\eqref{eq:np}
around $\tilde x$.
%
The canonical Lagrangian of \eqref{eq:np2.xi},
$\Lb_{[\tilde x]}:\R^n\times\R^m\to\R$, and the corresponding
saddle-point operator,
$\So_{[\tilde x]}:\R^n\times\R^m\to\R^n\times\R^m$, are, respectively,
\begin{align}
  \label{def:lag.a}
  \begin{split}
    \Lb_{[\tilde x]}(x,y)&:=f_{[\tilde x]}(x)+\inner{y}{g_{[\tilde x]}(x)},
    \\
    \So_{[{\tilde{x}}]}(x,y)&:=
    \begin{bmatrix}
            \nabla_{x}\Lb_{[\tilde x]}(x,y)\\
      -\nabla_{y}\Lb_{[\tilde x]}(x,y)
    \end{bmatrix}
    = \begin{bmatrix}
      \nabla f_{[\tilde x]}(x)  + \nabla g_{[\tilde x]}(x)y \\
      -g_{[\tilde x]}(x)
    \end{bmatrix}.
  \end{split}
\end{align}
Since  $\Lb_{[\tilde x]}(x,y)$ is a 3rd-degree polynomial in $(x,y)$ and
the components of $\So_{[\tilde x]}(x,y)$ are 2nd-degree
polynomials in $(x,y)$,
neither $\Lb_{[\tilde x]}$  is a
quadratic  approximation of $\Lb$ nor $\So_{[\tilde x]}$ 
is a linear approximation of 
$\So$;
nevertheless, this 3-rd degree functional and that componentwise
quadratic operator are, respectively, the canonical Lagrangian and the
associated saddle-point operator of a quadratic approximation of $(P)$
around $\tilde x$, namely, $(P_{[\tilde x]})$.
So, we may say that $\Lb_{[\tilde x]}$ and $\So_{[\tilde x]}$ are
approximations of
$\Lb$ and $\So$ \emph{based on quadratic approximations} of $f$ and $g$.

Each iteration of Rockafellar's PMM 
\emph{applied to problem}
$(P_{[\tilde x]})$ requires the solution of an instance of the generic
regularized saddle-point problem
\begin{align}
  \label{eq:rsdp.q}
  \begin{split}
    &\min_{x\in\R^n}\max_{y\in\R^m_+}\;f_{[\tilde
      x]}(x)+\Inner{y}{g_{[\tilde x]}(x)}
    +\dfrac{1}{2\lambda} (\norm{x-\bpt{x}}^2-
\norm{y-\bpt{y}}^2),
  \end{split}
\end{align}
where $\lambda>0$ and 
$(\bpt{x},\bpt{y})\in\R^n\times \R^m$. 
It follows from Proposition~\ref{pr:rf.rsdp} that this problem is
equivalent to the complementarity problem
\begin{align*}
  \begin{split}
   &(x,y)\in\R^n\times\R^m_+;\;\;\;
    \lambda\So_{[\tilde x]}(x,y)+(x,y)-(\bpt{x},\bpt{y})=(0,w);\;\;\;
   y,w\geq0;\;\;\; \inner{y}{w}=0.
  \end{split}
\end{align*}
To analyze the error of 
substituting $\So$ by $\So_{[\tilde x]}$
we introduce the
notation:
\begin{align}
  L_g=(L_1,\dots,L_m);\quad
  |(y_1,\dots,y_m)|=(|y_1|,\dots,|y_m|),\;\;
  (y_1,\dots,y_m)\in\R^m.
\end{align}

\begin{lemma}
  \label{lm:app.1}
  For any $({x},{y})\in\R^n\times\R^m$ and $\tilde x\in\R^n$
  \begin{align*}
    \Norm{\So({x},{y})-\So_{[\tilde x]}({x},{y})}
    & \leq \dfrac{L_0+\inner{L_g}{\abs{{y}}}}{2}\norm{{x}-\tilde x}^2
      + \dfrac{\norm{L_g}}{6}\norm{{x}-\tilde x}^3.
  \end{align*}
\end{lemma}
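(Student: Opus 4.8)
The plan is to estimate the two components of $\So(x,y)-\So_{[\tilde x]}(x,y)$ separately, using the Lipschitz continuity of the Hessians from assumption O.2. Recall from \eqref{def:lag.e} and \eqref{def:lag.a} that
\[
\So(x,y)-\So_{[\tilde x]}(x,y)=
\begin{bmatrix}
(\nabla f(x)-\nabla f_{[\tilde x]}(x))+(\nabla g(x)-\nabla g_{[\tilde x]}(x))y\\
-(g(x)-g_{[\tilde x]}(x))
\end{bmatrix}.
\]
So I would first record the standard Taylor-remainder bounds that follow from O.2: for a $\mathscr{C}^2$ function $h$ with $L$-Lipschitz Hessian and its quadratic approximation $h_{[\tilde x]}$ around $\tilde x$, one has $\abs{h(x)-h_{[\tilde x]}(x)}\le \tfrac{L}{6}\norm{x-\tilde x}^3$ and $\norm{\nabla h(x)-\nabla h_{[\tilde x]}(x)}\le \tfrac{L}{2}\norm{x-\tilde x}^2$. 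These are classical consequences of the integral form of the remainder and the Lipschitz bound $\norm{\nabla^2 h(\cdot)-\nabla^2 h(\tilde x)}\le L\norm{\cdot-\tilde x}$.

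Next I would apply these bounds termwise. For the second component, $\norm{g(x)-g_{[\tilde x]}(x)}$ is controlled componentwise: $\abs{g_i(x)-g_{i,[\tilde x]}(x)}\le \tfrac{L_i}{6}\norm{x-\tilde x}^3$, so $\norm{g(x)-g_{[\tilde x]}(x)}\le \tfrac{\norm{L_g}}{6}\norm{x-\tilde x}^3$ by the definition of $L_g$ and the Euclidean norm. For the first component, the $\nabla f$ part contributes at most $\tfrac{L_0}{2}\norm{x-\tilde x}^2$, and the term $(\nabla g(x)-\nabla g_{[\tilde x]}(x))y=\sum_{i=1}^m y_i(\nabla g_i(x)-\nabla g_{i,[\tilde x]}(x))$ has norm at most $\sum_{i=1}^m \abs{y_i}\tfrac{L_i}{2}\norm{x-\tilde x}^2=\tfrac{\inner{L_g}{\abs{y}}}{2}\norm{x-\tilde x}^2$. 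Adding these gives the first component a bound of $\tfrac{L_0+\inner{L_g}{\abs{y}}}{2}\norm{x-\tilde x}^2$.

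Finally, I would combine the two component estimates. Since the overall norm of a block vector is bounded by the sum of the block norms (or, more carefully, $\sqrt{a^2+b^2}\le a+b$ for $a,b\ge 0$), the claimed inequality
\[
\Norm{\So(x,y)-\So_{[\tilde x]}(x,y)}\le \dfrac{L_0+\inner{L_g}{\abs{y}}}{2}\norm{x-\tilde x}^2+\dfrac{\norm{L_g}}{6}\norm{x-\tilde x}^3
\]
follows immediately. I do not anticipate a genuine obstacle here; the only mild subtlety is being slightly careful that the two error contributions live in different blocks of $\R^n\times\R^m$, so that bounding each block separately and then summing is legitimate, and that the componentwise Lipschitz constants $L_i$ aggregate correctly into $\norm{L_g}$ for the Euclidean norm of the $g$-part and into $\inner{L_g}{\abs{y}}$ for the weighted $\nabla g$-part. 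Everything else is a routine invocation of the third-order Taylor remainder estimate under a Lipschitz-Hessian hypothesis.
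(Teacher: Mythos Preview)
Your proposal is correct and follows essentially the same approach as the paper: both decompose $\So(x,y)-\So_{[\tilde x]}(x,y)$ into its two blocks, bound the first block via the gradient-error estimates $\norm{\nabla f-\nabla f_{[\tilde x]}}\le \tfrac{L_0}{2}\norm{x-\tilde x}^2$ and $\norm{(\nabla g-\nabla g_{[\tilde x]})y}\le \tfrac{\inner{L_g}{\abs{y}}}{2}\norm{x-\tilde x}^2$, bound the second block componentwise to get $\tfrac{\norm{L_g}}{6}\norm{x-\tilde x}^3$, and then sum the block norms. The only cosmetic difference is that the paper states the two block bounds and then simply invokes \eqref{def:lag.e} and \eqref{def:lag.a} to conclude, while you make the use of $\sqrt{a^2+b^2}\le a+b$ explicit.
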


\begin{proof}
  It follows from triangle inequality, \eqref{eq:qpfg}
  and assumption
  (O.2) that
  \begin{align*}
\norm{\nabla_x\mathscr{L}_{[\tilde x]}(x,y)-\nabla_x\mathscr{L}(x,y)} 
& \leq
      \norm{\nabla f_{[\tilde x]}({x})-\nabla f({x})}+
      \norm{(\nabla g_{[\tilde x]}({x})-\nabla g({x})){y}}
\\& 
 \leq \left(\dfrac{L_0}{2}+\sum_{i=1}^m\abs{{y}_i}\dfrac{L_i}{2}\right)
      \norm{{x}-\tilde x}^2
  \end{align*}
  and
  \begin{align*}
    \norm{g_{[\tilde x]}({x})-g({x})}
     = \sqrt{\sum_{i=1}^m\left(g_{i,[\tilde x]}({x})-g_i({x})\right)^2}
      &\leq
      \sqrt{\sum_{i=1}^m\left(\dfrac{L_i}{6}\norm{{x}-\tilde x}^3\right)^2}\\
      &=\dfrac{\norm{L_g}}{6}\norm{{x}-\tilde x}^3.
  \end{align*}
  To end the proof, use the above inequalities, 
  \eqref{def:lag.e} and
  \eqref{def:lag.a}.
\end{proof}

Define, for $(\bpt{x},\bpt{y})\in\R^n\times\R^m$ and $\lambda > 0$,
\begin{align}
  \label{eq:viz}
  \mathcal{N}_\theta((\bpt{x},\bpt{y}),\lambda)
  &:= \Set{(x,y)\in\R^n\times\R^m_+\left|
        \begin{aligned}
    &\lambda\left(
          \dfrac{L_0+\inner{L_g}{\abs{y}}}{2}+\dfrac{2\norm{L_g}}{3}
       \rho
        \right)\rho\leq
\theta,\\
&\text{ where }\rho=
\sqrt{ \Psi_{\So,(\bpt{x},\bpt{y}),\lambda}(x,y)}
        \end{aligned}
      }.\right.
\end{align}
The next proposition shows that, if
$(\tilde x,\tilde y) \in 
\mathcal{N}_\theta((\bpt{x},\bpt{y}),\lambda)$ with
$\theta \leq 1/4$, then the 
solution of the regularized saddle-point problem
\eqref{eq:rsdp.q} is a better 
approximation than $(\tilde x,\tilde y)$ to
the solution of the regularized saddle-point 
problem~\eqref{eq:g.rsdp},
with respect to the merit 
function $\Psi_{\So,(x,y),\lambda}$.

\begin{proposition}
  \label{pr:rc}
  If $\lambda>0$, $(\bpt{x},\bpt{y})\in\R^n\times\R^m$,
  $(\tilde x,\tilde y)\in\R^n\times\R^m_+$ and 
  \begin{align*}
      (x,y)=&\arg\;\min_{x\in\R^n}\max_{y\in\R^m_+}\;f_{[\tilde
        x]}(x)+\Inner{y}{g_{[\tilde x]}(x)}
      +\dfrac{1}{2\lambda} 
   \left(\norm{x-\bpt{x}}^2-\norm{y-\bpt{y}}^2\right),
  \end{align*}
  then
  \begin{align*}
    \norm{(\tilde x,\tilde y)-({x},{y})} \leq
     \sqrt{\Psi_{\So,(\bpt{x},\bpt{y}),\lambda}(\tilde x,\tilde y)}.
  \end{align*}
  If, additionally,
  $(\tilde x,\tilde y)\in
  \mathcal{N}_\theta((\bpt{x},\bpt{y}),\lambda)$
  with $0\leq \theta\leq1/4$, then
  $$    \sqrt{\Psi_{\So,(\bpt{x},\bpt{y}),\lambda}({ x},{ y})}
  \leq \theta    \sqrt{\Psi_{\So,(\bpt{x},\bpt{y})\lambda}(\tilde x,\tilde y)}
  $$
  and $({ x},{ y})\in
  \mathcal{N}_{\theta^2}((\bpt{x},\bpt{y}),\lambda)$.
\end{proposition}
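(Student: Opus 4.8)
The plan is to treat the solution $(x,y)$ of \eqref{eq:rsdp.q} as an \emph{exact} proximal step for the saddle-point operator $\So_{[\tilde x]}$ of the quadratic model, and then to control the gap between that model and the true operator $\So$ using Lemma~\ref{lm:app.1}. First I would invoke Proposition~\ref{pr:rf.rsdp}, applied with $\So_{[\tilde x]}$ in place of $\So$, to write $(x,y) = (\lambda(\So_{[\tilde x]}+\Nc_{\R^n\times\R^m_+})+I)^{-1}(\bpt x,\bpt y)$. Since $(\tilde x,\tilde y)\in\R^n\times\R^m_+$, Lemma~\ref{lm:loc3}(a)--(b) gives a triple $(w,v,\varepsilon)$ with $w=(g(\tilde x)+\lambda^{-1}\bpt y)_-$, $v=\So(\tilde x,\tilde y)-(0,w)\in(\So+\Nc_{\R^n\times\R^m_+}^{[\varepsilon]})(\tilde x,\tilde y)$ and $\norm{\lambda v+(\tilde x,\tilde y)-\bpt z}^2+2\lambda\varepsilon=\Psi_{\So,\bpt z,\lambda}(\tilde x,\tilde y)$. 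The first inequality of the proposition is then immediate from Lemma~\ref{lm:loc3}(c) once I observe that replacing $v$ by $v_{[\tilde x]}:=\So_{[\tilde x]}(\tilde x,\tilde y)-(0,w)$ — which lies in $(\So_{[\tilde x]}+\Nc^{[\varepsilon]})(\tilde x,\tilde y)$ with the \emph{same} $w,\varepsilon$ because $\So$ and $\So_{[\tilde x]}$ agree at $\tilde x$ up to the quadratic-model construction (note $f_{[\tilde x]},g_{[\tilde x]}$ match $f,g$ and their gradients at $\tilde x$, so $\So_{[\tilde x]}(\tilde x,\tilde y)=\So(\tilde x,\tilde y)$) — shows that $(\tilde x,\tilde y)$ is an approximate prox solution for $\So_{[\tilde x]}$ with error measured by the same $\Psi$; applying Lemma~\ref{lm:d} (or directly Lemma~\ref{lm:loc3}(c)) to $\So_{[\tilde x]}$ yields $\norm{(\tilde x,\tilde y)-(x,y)}\le\sqrt{\Psi_{\So_{[\tilde x]},\bpt z,\lambda}(\tilde x,\tilde y)}=\sqrt{\Psi_{\So,\bpt z,\lambda}(\tilde x,\tilde y)}$.

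Next, for the sharper bound under the neighborhood hypothesis, the idea is to estimate $\Psi_{\So,\bpt z,\lambda}(x,y)$ directly. Using \eqref{eq:Psi.1} with the specific choice $w'=(g(x)+\lambda^{-1}\bpt y)_-$ gives $\Psi_{\So,\bpt z,\lambda}(x,y)\le\norm{\lambda(\So(x,y)-(0,w'))+(x,y)-\bpt z}^2+2\lambda\inner{y}{w'}$. The point $(x,y)$ solves the \emph{model} complementarity problem exactly, so $\lambda(\So_{[\tilde x]}(x,y)-(0,\hat w))+(x,y)-\bpt z=0$ for the corresponding $\hat w$ with $\inner{y}{\hat w}=0$; subtracting, the residual for the true operator is $\lambda(\So(x,y)-\So_{[\tilde x]}(x,y))$ plus lower-order complementarity terms. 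I would then bound $\norm{\So(x,y)-\So_{[\tilde x]}(x,y)}$ by Lemma~\ref{lm:app.1}, which involves $\norm{x-\tilde x}$; and here $\norm{x-\tilde x}\le\norm{(\tilde x,\tilde y)-(x,y)}\le\sqrt{\Psi_{\So,\bpt z,\lambda}(\tilde x,\tilde y)}=:\rho$ by the first part. Substituting, $\lambda\norm{\So(x,y)-\So_{[\tilde x]}(x,y)}\le\lambda\bigl(\tfrac{L_0+\inner{L_g}{|y|}}{2}\rho^2+\tfrac{\norm{L_g}}{6}\rho^3\bigr)$; I will want to massage the coefficient of $|y|$ and the cubic term so they match the defining inequality in \eqref{eq:viz}, i.e. $\lambda\bigl(\tfrac{L_0+\inner{L_g}{|y|}}{2}+\tfrac{2\norm{L_g}}{3}\rho\bigr)\rho\le\theta$, which is exactly the statement that $(\tilde x,\tilde y)\in\mathcal N_\theta(\bpt z,\lambda)$ after identifying $\rho=\sqrt{\Psi_{\So,\bpt z,\lambda}(\tilde x,\tilde y)}$ — being careful that the norm $|y|$ in \eqref{eq:viz} is evaluated at the neighborhood point $(\tilde x,\tilde y)$, not at $(x,y)$, so a short argument will be needed to pass from $|\tilde y|$ to $|y|$ (bounding $\norm{y-\tilde y}\le\rho$ and absorbing the difference, or noting the neighborhood condition is what is being \emph{propagated}). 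Carrying the constants through, the residual contributes at most $\theta\rho$, and after collecting the complementarity cross-terms (which are nonnegative and controlled by the same quantities, essentially via $2\lambda\inner{y}{w'}\le$ a square that gets absorbed) one obtains $\sqrt{\Psi_{\So,\bpt z,\lambda}(x,y)}\le\theta\rho=\theta\sqrt{\Psi_{\So,\bpt z,\lambda}(\tilde x,\tilde y)}$.

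Finally, the membership $(x,y)\in\mathcal N_{\theta^2}(\bpt z,\lambda)$ follows by plugging the just-established bound $\sqrt{\Psi_{\So,\bpt z,\lambda}(x,y)}\le\theta\rho\le\rho$ (using $\theta\le 1/4<1$) into the defining inequality of \eqref{eq:viz} for the point $(x,y)$: with $\rho':=\sqrt{\Psi_{\So,\bpt z,\lambda}(x,y)}\le\theta\rho$, one has $\lambda\bigl(\tfrac{L_0+\inner{L_g}{|y|}}{2}+\tfrac{2\norm{L_g}}{3}\rho'\bigr)\rho'\le\lambda\bigl(\tfrac{L_0+\inner{L_g}{|y|}}{2}+\tfrac{2\norm{L_g}}{3}\rho\bigr)\cdot\theta\rho$, and once the $|y|$-versus-$|\tilde y|$ discrepancy is handled as above, the bracketed factor times $\rho$ is $\le\theta$, giving the product $\le\theta^2$.

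\textbf{Main obstacle.} I expect the delicate point to be the bookkeeping in the second step: cleanly reducing $\Psi_{\So,\bpt z,\lambda}(x,y)$ to (residual from the model) $+$ (complementarity terms), showing the latter are harmless, and in particular reconciling the fact that the neighborhood $\mathcal N_\theta$ in \eqref{eq:viz} is defined using $|y|$ at the point in question while Lemma~\ref{lm:app.1} naturally produces $|y|$ at the \emph{argument} $(x,y)$ of $\So$ — the factor $2/3$ versus $1/6+1/2$ in the cubic/quadratic coefficients is presumably where the constant $\theta\le 1/4$ and a Young-type inequality get used to close the estimate.
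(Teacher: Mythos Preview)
Your overall strategy matches the paper's: first part via $\So_{[\tilde x]}(\tilde x,\tilde y)=\So(\tilde x,\tilde y)$ and Lemma~\ref{lm:loc3}(c), second part via Lemma~\ref{lm:app.1} applied at $(x,y)$ with $\norm{x-\tilde x}\le\tilde\rho$, and the $|y|\to|\tilde y|$ swap by Cauchy--Schwarz. The final $\mathcal N_{\theta^2}$ step is also as you sketch.

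The one place you make life harder than necessary is your choice of $w$ in the second step. You take $w'=(g(x)+\lambda^{-1}\bpt y)_-$ (the true minimizer) and then worry about ``lower-order complementarity terms'' and how $2\lambda\inner{y}{w'}$ gets absorbed. The paper instead plugs the \emph{model's} complementarity slack $\hat w$ (the one satisfying $\lambda(\So_{[\tilde x]}(x,y)-(0,\hat w))+(x,y)-\bpt z=0$, $\hat w\ge 0$, $\inner{y}{\hat w}=0$) into the definition~\eqref{eq:Psi.1} of $\Psi_{\So,\bpt z,\lambda}(x,y)$ as a feasible but possibly suboptimal $w$. This gives immediately
\[
\Psi_{\So,\bpt z,\lambda}(x,y)\ \le\ \norm{\lambda(\So(x,y)-(0,\hat w))+(x,y)-\bpt z}^2+2\lambda\inner{y}{\hat w}
= \lambda^2\norm{\So(x,y)-\So_{[\tilde x]}(x,y)}^2,
\]
with \emph{no} complementarity leftover. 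There are no cross-terms to collect and no Young-type inequality is needed.

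After that, the constants line up by pure arithmetic: Lemma~\ref{lm:app.1} gives $\tfrac{L_0+\inner{L_g}{|y|}}{2}\tilde\rho^2+\tfrac{\norm{L_g}}{6}\tilde\rho^3$, and $\inner{L_g}{|y|}\le\inner{L_g}{|\tilde y|}+\norm{L_g}\tilde\rho$ adds $\tfrac{\norm{L_g}}{2}\tilde\rho^3$, so $\tfrac{1}{2}+\tfrac{1}{6}=\tfrac{2}{3}$ matches~\eqref{eq:viz} exactly and the neighborhood hypothesis yields $\sqrt{\Psi_{\So,\bpt z,\lambda}(x,y)}\le\theta\tilde\rho$. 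The restriction $\theta\le 1/4$ is used only in the last step, via $\tfrac{1}{2}+\tfrac{2}{3}\cdot\tfrac{1}{4}=\tfrac{2}{3}$, to recover the same bracket at $(x,y)$ and conclude $(x,y)\in\mathcal N_{\theta^2}$.
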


\begin{proof}
  Applying Lemma~\ref{lm:loc3} to \eqref{eq:rsdp.q} and using
  \eqref{def:lag.a} we conclude that
  \begin{align*}
    \norm{(\tilde x,\tilde y)-(x,y)} \leq
    \sqrt{\Psi_{\So_{[\tilde x]},(\bpt{x},\bpt{y}),\lambda}(\tilde x,\tilde y)}.   
  \end{align*}
  It follows from \eqref{eq:qpfg}, \eqref{def:lag.a}, and
  \eqref{def:lag.e} that
  $\So_{[\tilde x]}(\tilde x,\tilde y)=\So(\tilde x,\tilde y)$, which,
  combined with \eqref{eq:Psi.1}, implies that
  $\Psi_{\So_{[\tilde x]},(\bpt{x},\bpt{y}),\lambda}(\tilde x,\tilde
  y)=\Psi_{\So(\bpt{x},\bpt{y}),\lambda}(\tilde x,\tilde y)$. 
	To prove the first part of the proposition, combine this result with
  the above inequality.

  To simplify the proof of the second part of the proposition, define
  \begin{align*}
      &\tilde \rho
      =\sqrt{\Psi_{\So_{[\tilde x]},(\bpt{x},\bpt{y}),\lambda}(\tilde x,\tilde y)},\
   \qquad
    w
    =( g( x)+\lambda^{-1}\bpt{y})_-,\\ 
		&r=\lambda\bigg(\So( x, y)-(0,w)\bigg)+( x, y)-(\bpt{x},\bpt{y}).
  \end{align*}
   Since $( x, y)$ is the solution of \eqref{eq:rsdp.q},
   \begin{align*}
     \lambda\bigg(\So_{[\tilde x]}( x, y)-(0,w)\bigg)+( x, y)
     -(\bpt{x},\bpt{y})=0,\quad  y,w\geq0,\quad
     \inner{ y}{w}=0.
   \end{align*}
   Therefore, 
  $r=\lambda(\So( x, y)-\So_{[\tilde x]}( x, y))$. 
  Using also~\eqref{eq:Psi.1}, 
  Lemma~\ref{lm:app.1} and the first part of the
   proposition we conclude that
   \begin{align*}
     \sqrt{\Psi_{\So,(\bpt{x},\bpt{y}),\lambda}( x, y)}
     \leq \sqrt{\norm{r}^2+2\lambda\inner{ y}{w}}
       &=\lambda\Norm{\So( x, y)
       -\So_{[\tilde x]}( x, y)}\\
       &\leq \lambda\left(\dfrac{L_0+\inner{L_g}{|y|}}{2}
       +\dfrac{\norm{L_g}}{6}\tilde \rho\right) 
   \tilde \rho^2.
   \end{align*}
   Moreover, it 
   follows from the Cauchy-Schwarz inequality,
  the first part
  of the proposition
  and the definition of $\tilde \rho$
  that
   \begin{align*}
     \inner{L_g}{|y|} \leq
     \inner{L_g}{|\tilde y|}+
     \norm{L_g}\norm{y-\tilde y}
     \leq \inner{L_g}{|\tilde y|}+\norm{L_g}\tilde \rho.
   \end{align*}
 Therefore
   \begin{align*}
     \sqrt{\Psi_{\So,(\bpt{x},\bpt{y}),\lambda} ( x, y) }
     \leq \lambda\left(\dfrac{L_0+
    \inner{L_g}{|\tilde y|}}{2}
     +\dfrac{2\norm{L_g}}{3}\tilde 
   \rho\right)\tilde \rho^2.
   \end{align*}

   Suppose that
   $(\tilde x,\tilde y)\in
   \mathcal{N}_\theta((\bpt{x},\bpt{y}),\lambda)$
   with $0\leq \theta\leq1/4$. It follows trivially from this
   assumption, \eqref{eq:viz}, the definition of $\tilde \rho$, and the
   above inequality, that the inequality in the second part of the
   proposition holds.
   To end the proof of the second part, let
   $\rho= \sqrt{\Psi_{\So_{[\tilde
         x]},(\bpt{x},\bpt{y}),\lambda}( x,  y)}$.
   Since $\rho \leq \theta \tilde \rho\leq \tilde 
   \rho/4$ and
   $\inner{L_g}{|y|}\leq 
   \inner{L_g}{|\tilde y|}+\norm{L_g}\tilde \rho$,

   \begin{align*}
     \lambda\left(
     \dfrac{L_0+\inner{L_g}{|y|}}{2}+
    \dfrac{2\norm{L_g}}{3}
     \rho
     \right)\rho
     & \leq
       \lambda\left(
       \dfrac{L_0+\inner{L_g}{|\tilde y|}+
       \norm{L_g}\tilde \rho}{2}
       +\dfrac{2\norm{L_g}}{3}
       \dfrac{\tilde \rho}{4}
       \right)\theta \rho\\
     & =
       \lambda\left(
       \dfrac{L_0+\inner{L_g}{|\tilde y|}}{2}
       +\dfrac{2\norm{L_g}}{3}\tilde \rho
       \right)\theta\tilde \rho\leq \theta^2,
   \end{align*}
   where the last inequality follows from the assumption
   $(\tilde x,\tilde y)\in \mathcal{N}_\theta((\bpt{x},\bpt{y}),\lambda)$ and
   \eqref{eq:viz}.
   To end the proof use the definition of $\rho$, the above inequality
   and \eqref{eq:viz}.     
\end{proof}

In view of the preceding proposition, for a given
$(\bpt{x},\bpt{y})\in\R^n\times\R^m$ and $\theta>0$, it is natural to
search for $\lambda>0$ and
$(x,y)\in\mathcal{N}_\theta((\bpt{x},\bpt{y}),\lambda)$.

\begin{proposition}
  \label{pr:boot}
  For any $(\bpt{x},\bpt{y}) \in \R^n\times\R^m$,
  $(x,y) \in \R^n\times\R^m_+$, and $\theta>0$ there
  exists $\bar \lambda > 0$ such that
  $(x,y) \in \mathcal{N}_\theta((\bpt{x},\bpt{y}),\lambda)$ for any
  $\lambda \in (0, \bar\lambda]$.
\end{proposition}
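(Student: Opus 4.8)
The plan is to exploit the explicit, closed-form expression for $\Psi_{\So,(\bpt{x},\bpt{y}),\lambda}(x,y)$ given in \eqref{eq:Psi.2} together with the quantitative estimate in Lemma~\ref{lm:ef}(c). Fix $(\bpt{x},\bpt{y})\in\R^n\times\R^m$, $(x,y)\in\R^n\times\R^m_+$, and $\theta>0$. Define $\rho(\lambda):=\sqrt{\Psi_{\So,(\bpt{x},\bpt{y}),\lambda}(x,y)}$ and consider the function
\[
  h(\lambda):=\lambda\left(\dfrac{L_0+\inner{L_g}{\abs{y}}}{2}+\dfrac{2\norm{L_g}}{3}\rho(\lambda)\right)\rho(\lambda),\qquad \lambda>0,
\]
so that, by \eqref{eq:viz}, membership $(x,y)\in\mathcal{N}_\theta((\bpt{x},\bpt{y}),\lambda)$ is precisely the condition $h(\lambda)\leq\theta$. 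It therefore suffices to show $h(\lambda)\to 0$ as $\lambda\downarrow 0$, since then there is $\bar\lambda>0$ with $h(\lambda)\leq\theta$ for all $\lambda\in(0,\bar\lambda]$; the fact that the inequality persists for all smaller $\lambda$ in that interval is exactly what we need (no monotonicity of $h$ is required beyond this limit, because the claim is only about an interval $(0,\bar\lambda]$ — but in fact $h$ will be controlled for every $\lambda\le\bar\lambda$ by the bound we derive).

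First I would bound $\rho(\lambda)$ for small $\lambda$. By Lemma~\ref{lm:ef}(c),
\[
  \rho(\lambda)=\sqrt{\psi(\lambda)}\leq \norm{\lambda\So(z)}+\norm{z-\bpt z}=\lambda\norm{\So(x,y)}+\norm{(x,y)-(\bpt{x},\bpt{y})},
\]
which shows $\rho(\lambda)$ stays bounded as $\lambda\downarrow 0$, say $\rho(\lambda)\le C$ for $\lambda\le 1$, where $C:=\norm{\So(x,y)}+\norm{(x,y)-(\bpt{x},\bpt{y})}$. Substituting this bound into the definition of $h$ gives, for $\lambda\le 1$,
\[
  h(\lambda)\;\le\;\lambda\left(\dfrac{L_0+\inner{L_g}{\abs{y}}}{2}+\dfrac{2\norm{L_g}}{3}C\right)C\;=:\;\lambda\,K,
\]
with $K$ a finite constant depending only on $L_0,L_g,x,y,\bpt{x},\bpt{y}$ but not on $\lambda$. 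Hence, setting
\[
  \bar\lambda:=\min\left\{1,\;\dfrac{\theta}{K}\right\}\quad(\text{and }\bar\lambda:=1\text{ if }K=0),
\]
we get $h(\lambda)\le\lambda K\le\theta$ for every $\lambda\in(0,\bar\lambda]$, i.e. $(x,y)\in\mathcal{N}_\theta((\bpt{x},\bpt{y}),\lambda)$ for all such $\lambda$, which is the assertion.

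There is essentially no serious obstacle here: the only point requiring a little care is making sure the constant $K$ is genuinely independent of $\lambda$, which is guaranteed because the $\lambda$-dependence of $\rho(\lambda)$ has been completely absorbed into the uniform bound $\rho(\lambda)\le C$ valid on $(0,1]$ coming from Lemma~\ref{lm:ef}(c). One should also note the degenerate case where $\inner{L_g}{\abs{y}}=0$, $L_0=0$ and $\norm{L_g}=0$ simultaneously — but assumption (O.2) excludes $\norm{L_g}=0$, and in any event when $K=0$ the condition $h(\lambda)\le\theta$ holds for all $\lambda>0$, so $\bar\lambda$ may be taken arbitrarily (e.g. $\bar\lambda=1$). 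This completes the proof.
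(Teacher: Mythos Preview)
Your proof is correct and follows essentially the same approach as the paper, which simply states that the result follows from definition~\eqref{eq:viz} and Lemma~\ref{lm:ef}(c). You have merely spelled out the details: bounding $\rho(\lambda)$ uniformly on $(0,1]$ via Lemma~\ref{lm:ef}(c) and then observing that the defining inequality in~\eqref{eq:viz} is satisfied once $\lambda$ is small enough.
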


\begin{proof}
The proof follows from the definition \eqref{eq:viz} 
and Lemma~\ref{lm:ef}(c).
\end{proof}

The neighborhoods $\mathcal{N}_\theta$ as well as the next defined function
will be instrumental in the definition and analysis of Algorithm 1, to be
presented in the next section.

\begin{definition}\label{df:rho}
   For
   $\alpha>0$ and $y\in\R^m$,
 $\rho(y,\alpha)$ stands for the largest root of 
 \begin{align*}
   \left( \dfrac{L_0+\inner{L_g}{\abs{y}}}{2}+
     \dfrac{2\norm{L_g}}{3}\rho\right)\rho=\alpha.
 \end{align*}
\end{definition}

Observe that 
 for any $\lambda,\theta>0$ and
$(\bpt{x},\bpt{y})\in \R^n\times\R^m$,
\begin{align}
  \label{eq:viz.2}
  \mathcal{N}_\theta((\bpt{x},\bpt{y}),\lambda)=\Set{ (x,y)\in\R^n\times
    \R^m_+\;\left|
      \sqrt{\Psi_{\So,(\bpt{x},\bpt{y}),\lambda}(x,y)}
      \leq \rho\left(y,{\theta}/{\lambda}\right)}
\right. .
\end{align}
Moreover, since $\rho(y,\alpha)$ is the largest root 
of a quadratic it follows that it has an
explicit formula.

\section{A relaxed hybrid proximal extragradient
  method of multipliers based on second-order approximations}
\label{sec:sqr-hpemm}

In this section we consider the 
smooth convex programming problem \eqref{eq:np}
where assumptions O.1, O.2 and O.3 are assumed
to hold. Aiming at finding approximate solutions
of the latter problem, we propose a new method,
called (relaxed) hybrid proximal extragradient method
of multipliers based on quadratic 
approximations (hereafter
rHPEMM-2o), which is a modification of 
Rockafellar's PMM in
the following senses:
in each iteration either a relaxed extragradient step is executed or
a second order approximation of \eqref{eq:g.rsdp} is solved. More specifically,
each iteration $k$ uses the (available) variables
\[
(x_{k-1},y_{k-1}),\;\;\;\;(\tilde x_{k},\tilde y_{k})\in\R^n\times\R^m_+,\;
\text{ and }\lambda_k>0
\]
to generate
\[
(x_k,y_k),\;\;\;\;(\tilde x_{k+1},\tilde y_{k+1})\in\R^n\times\R^m_+,\;
\text{ and }\lambda_{k+1}>0
\]
in one of two ways.
Either
\\
(1) $(x_k,y_k)$ is obtained from $(x_{k-1},y_{k-1})$ via a relaxed
extragradient step, 
\[(x_k,y_k)=(x_{k-1},y_{k-1})-\tau\lambda_k v_k,\;\;
v_k\in (\So+\Nc_{\R^n\times\R^m_+})^{\varepsilon_k}(\tilde x_k,\tilde y_k)
\]
in which case $(\tilde x_{k+1},\tilde y_{k+1})=(\tilde x_{k},\tilde y_{k})$ and
$\lambda_{k+1}
<\lambda_k$; or
\\
(2) $(x_k,y_k)=(x_{k-1},y_{k-1})$ and
 the point $(\tilde x_{k+1},\tilde y_{k+1})$
is the outcome of one iteration (at $(x_k,y_k)$) of
Rockafellar's PMM for problem~\eqref{eq:np2.xi} with
$\tilde x=\tilde x_{k}$ and $\lambda=\lambda_{k+1}$.

Next we present our algorithm,
where $\mathcal{N}_\theta$, 
$f_{[\tilde x]},g_{[\tilde x]}$ and $\rho(y,\alpha)$ are 
as in \eqref{eq:viz}, 
\eqref{eq:qpfg},
and Definition~\ref{df:rho}, respectively.

\bigskip

\SetKwInput{KwInit}{initialization}

\LinesNumbered

\SetAlgoNlRelativeSize{0}
\begin{algorithm}[H]
\SetAlgoLined
\KwInit{choose $(x_0,y_0)=(\tilde x_1,\tilde y_1)\in \R^n\times\R^m_+$,
  $0<\sigma<1$,
  $0<\theta\leq 1/4$\;
 \quad define $h:=$  positive root of 
   $\theta(1+h')\left(1+h'\left(
    1+1/\sigma
    \right)\right)^2=1$,\,\,\,  $\tau=h/(1+h)$\;
\quad   choose $\lambda_1>0$ such that
  $(\tilde x_1,\tilde y_1)\in\mathcal{N}_{\theta^2}((x_0,y_0),\lambda_1)$
  and set $k\leftarrow 1$
\BlankLine
}
\eIf{$\rho(\tilde y_{k},\theta^2/\lambda_k)
     \leq
     \sigma\norm{(\tilde x_{k},\tilde y_{k})-(x_{k-1},y_{k-1})}$
}
{\BlankLine
  $\lambda_{k+1}:=(1-\tau)\lambda_k$\;
  $(\tilde x_{k+1},\tilde y_{k+1}):=(\tilde x_{k},\tilde y_{k})$\;
  $x_k:=x_{k-1}-\tau\lambda_k
        [\nabla f(\tilde x_k)+\nabla g(\tilde x_k)\tilde y_k]$,
  $y_k:=y_{k-1}+\tau[\lambda_kg(\tilde x_k)
        +(\lambda_kg(\tilde x_k)+y_{k-1})_-]$\;
\BlankLine
}
{\BlankLine
  $\lambda_{k+1}:=(1-\tau)^{-1}\lambda_k$\;
  $(x_k,y_k):=(x_{k-1},y_{k-1})$\;
  $(\tilde x_{k+1},\tilde y_{k+1}):=\arg\displaystyle\min_{x\in\R^n}\max_{y\in\R^m_+}
    \;\;f_{[\tilde x_{k}]}(x)+\Inner{y}{g_{[\tilde x_{k}]}(x)}
    +\dfrac{\norm{x-x_k}^2- \norm{y-y_k}^2
    }{2\lambda_{k+1}}$\;
}
set $k\leftarrow k+1$ and go to step 1\;
\caption{Relaxed hybrid proximal extragradient method 
of multipliers
based on 2nd ord.\ approx.\  (r-HPEMM-2o)}
\end{algorithm}

\bigskip

To simplify the presentation of Algorithm 1, 
we have omitted a stopping test.
First, we discuss its initialization.
In the absence of additional information on the dual variables
$y$, one shall consider 
the initialization
\begin{align}
  \label{eq:gi}
  (x_0,y_0)=(\tilde x_1,\tilde y_1)=(x,0),
\end{align}
where $x$ is ``close'' to the feasible set.
If $(x,y)\in\R^n\times\R^m_+$ an approximated 
solution of \eqref{eq:kkt.p} is available, 
one can do a ``warm start'' by
setting $(x_0,y_0)=(\tilde x_1,\tilde y_1)=(x,y)$.
Note that $h>0$ and $0<\tau<1$.
%
Existence of $\lambda_1>0$ as prescribed in 
this step follows from the
inclusion 
$(\tilde x_1,\tilde y_1)\in\R^n\times\R^m_+$ and from
Proposition~\ref{pr:boot}.
Moreover, if we compute $\lambda=\lambda_1>0$ 
satisfying
the inequality 
\[
 \left(\dfrac{2\norm{L_g}\norm{\So(x_0,y_0)}^2}{3}
\right)\lambda^3
+\left(\dfrac{L_0+\inner{L_g}{|y_0|}}{2}
\norm{\So(x_0,y_0)}\right)
\lambda^2-\theta^2\leq 0,
\]
where the operator $\So$ is defined 
in~\eqref{def:lag.e},
use Lemma~\ref{lm:ef} (c) and 
Definition~\ref{df:rho}
we find
$\sqrt{\Psi_{\So,(x_0,y_0),\lambda_1}(x_0,y_0)}\leq 
\lambda_1\norm{\So(x_0,y_0)}\leq \rho(y_0,
\theta^2/\lambda_1)$ which, in turn, combined with the fact that 
$(\tilde x_1,\tilde y_1)=(x_0,y_0)$, gives the inclusion 
in the initialization of Algorithm 1.

The computational cost of block of steps [\textbf{2},\textbf{3},\textbf{4}]
is negligible.
The initialization $\lambda_1>0$, together with the update of $\lambda_k$ by
step \textbf{2} or \textbf{6} guarantee that $\lambda_k>0$ for all $k$.
Therefore, the saddle-point problem to be solved in step \textbf{8} is
strongly convex-concave and hence has a unique solution.
The computational burden of the algorithm is in the computation of the
solution of this problem.

We will assume that $(\tilde x_1,\tilde y_1)$ {does not satisfy}
\eqref{eq:kkt.p}, i.e., the KKT conditions for \eqref{eq:np}, otherwise we would
already have a solution for the KKT
system and 
$\tilde x_1$ would be a
solution of \eqref{eq:np}.
For the sake of conciseness we introduce, for $k=1,\dots$, the notation
\begin{align}
  \label{eq:zks}
  z_{k-1}&=(x_{k-1},y_{k-1}), \qquad
  \tilde z_k= (\tilde x_k,\tilde y_k),\qquad
     \rho_k=\rho(\tilde y_k,\theta^2/\lambda_{k}).
\end{align}
Since there are two kinds of iterations in 
Algorithm 1, its is convenient to
have a notation for them. Define
\begin{align}
  \label{eq:AB}
  \begin{aligned}
    A&:=\{k\in\N\setminus\set{0}\;|\;
     \rho_{k} \leq 
     \sigma\norm{\tilde z_{k}-z_{k-1}}\},
     \;\;\;
    B:=\{k\in\N\setminus\set{0}\;|\;
    \rho_{k}>
    \sigma\norm{\tilde z_{k}-z_{k-1}}\}.
  \end{aligned}
\end{align}
Observe that in iteration $k$, either $k\in A$ and steps \textbf{2},
\textbf{3}, \textbf{4} are executed, or $k\in B$ and steps \textbf{6},
\textbf{7}, \textbf{8} are executed.

\begin{proposition}
  \label{pr:on.theta2}
  For $k=1,\dots$,
  \begin{enumerate}
	\item[\emph{(a)}] \label{item:a} $\tilde z_{k} \in
    \mathcal{N}_{\theta^2}(z_{k-1},\lambda_k)$;
  \item[\emph{(b)}] \label{item:b} $\sqrt{\Psi_{\So,z_{k-1},\lambda_k}(\tilde z_{k})}
    \leq \rho_{k}$.
  \item[\emph{(c)}]
    \label{item:new} $z_{k-1} \in\R^n\times\R^m_+$ .
   \end{enumerate}
\end{proposition}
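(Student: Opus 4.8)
The plan is to prove the three items by induction on $k$, since the three statements are interlocked through the two possible updates in Algorithm~1. For the base case $k=1$: item (c) holds because the initialization chooses $z_0=(x_0,y_0)\in\R^n\times\R^m_+$; item (a) is exactly the inclusion $(\tilde x_1,\tilde y_1)\in\mathcal{N}_{\theta^2}((x_0,y_0),\lambda_1)$ guaranteed in the initialization step (whose existence follows from Proposition~\ref{pr:boot}); and item (b) then follows from (a) via the reformulation \eqref{eq:viz.2} of the neighborhood, since $\tilde z_1\in\mathcal{N}_{\theta^2}(z_0,\lambda_1)$ means $\sqrt{\Psi_{\So,z_0,\lambda_1}(\tilde z_1)}\leq\rho(\tilde y_1,\theta^2/\lambda_1)=\rho_1$. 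So really, at every stage, (b) is just (a) rephrased through \eqref{eq:viz.2}, and (c) is a bookkeeping fact about which branch was taken; the only substantive content is propagating (a) from $k$ to $k+1$.

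For the inductive step, assume (a)--(c) hold at index $k$ and split into the two cases. If $k\in B$, then steps \textbf{6}--\textbf{8} are executed: we keep $(x_k,y_k)=(x_{k-1},y_{k-1})$, so $z_k=z_{k-1}\in\R^n\times\R^m_+$ by the inductive hypothesis (c), which gives (c) at $k+1$; and $(\tilde x_{k+1},\tilde y_{k+1})$ is defined as the exact solution of the regularized saddle-point problem \eqref{eq:rsdp.q} with $\tilde x=\tilde x_k$, $(\bpt x,\bpt y)=(x_k,y_k)=z_{k-1}$, and $\lambda=\lambda_{k+1}=(1-\tau)^{-1}\lambda_k>\lambda_k$. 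Here I would invoke Proposition~\ref{pr:rc}: the inductive hypothesis (a) says $\tilde z_k\in\mathcal{N}_{\theta^2}(z_{k-1},\lambda_k)$, and since $\theta^2\leq\theta\leq 1/4$, the second part of Proposition~\ref{pr:rc} applies with "$\theta$" there replaced by $\theta^2$, yielding that the new point lies in $\mathcal{N}_{(\theta^2)^2}(z_{k-1},\lambda_k)\subset\mathcal{N}_{\theta^2}(z_{k-1},\lambda_k)$. The catch is that Proposition~\ref{pr:rc} gives membership in the neighborhood at stepsize $\lambda_k$, whereas (a) at $k+1$ requires membership at stepsize $\lambda_{k+1}>\lambda_k$; this mismatch is exactly what the definition of $h$ and $\tau$ in the initialization is engineered to absorb, and bridging it is the main obstacle — one must use Proposition~\ref{pr:linc} (the Lipschitz-type dependence of $\sqrt{\Psi}$ on $\lambda$) together with the equivalence \eqref{eq:viz.2} and the fact that $\rho(y,\alpha)$ is decreasing in $\alpha$, and then verify that the algebraic inequality $\theta(1+h')(1+h'(1+1/\sigma))^2=1$ defining $h$ is precisely what makes the resulting bound on $\sqrt{\Psi_{\So,z_k,\lambda_{k+1}}(\tilde z_{k+1})}$ no larger than $\rho_{k+1}$.

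If instead $k\in A$, then steps \textbf{2}--\textbf{4} are executed: a relaxed extragradient step. Here $(\tilde x_{k+1},\tilde y_{k+1})=(\tilde x_k,\tilde y_k)$ is unchanged, $\lambda_{k+1}=(1-\tau)\lambda_k<\lambda_k$, and $z_k=z_{k-1}-\tau\lambda_k v_k$ with the explicit formulas for $x_k,y_k$ matching exactly the expressions for $\bpt x(\tau),\bpt y(\tau)$ in Proposition~\ref{pr:relax2} (with $\bpt z=z_{k-1}$, $z=\tilde z_k$, $w=(g(\tilde x_k)+\lambda_k^{-1}y_{k-1})_-$). For (c): since by inductive hypothesis (c) we have $y_{k-1}\geq 0$, the last sentence of Proposition~\ref{pr:relax2} gives $y_k=\bpt y(\tau)\geq 0$, so $z_k\in\R^n\times\R^m_+$. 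For (a): Proposition~\ref{pr:relax2} gives $\Psi_{\So,z_k,(1-\tau)\lambda_k}(\tilde z_k)\leq\Psi_{\So,z_{k-1},\lambda_k}(\tilde z_k)$, i.e., $\Psi_{\So,z_k,\lambda_{k+1}}(\tilde z_{k+1})\leq\Psi_{\So,z_{k-1},\lambda_k}(\tilde z_k)$. Now I would use inductive hypothesis (a) in the stronger form actually carried through the induction — in the $A$-branch the previous step left us in $\mathcal{N}_{\theta^2}$ but also, as remarked, one should track membership in $\mathcal{N}_{\theta^2}$ directly — to bound the right-hand side by $\rho_k^2$, and then check via \eqref{eq:viz.2} and Definition~\ref{df:rho} that, because $\lambda_{k+1}<\lambda_k$ makes $\theta^2/\lambda_{k+1}>\theta^2/\lambda_k$ hence $\rho_{k+1}$ could be smaller, one still gets $\sqrt{\Psi_{\So,z_k,\lambda_{k+1}}(\tilde z_{k+1})}\leq\rho_{k+1}$; the key point is that $\tilde y_{k+1}=\tilde y_k$ so the only change in $\rho$ is through $\lambda$, and monotonicity of $\rho(y,\cdot)$ together with $\Psi$ not increasing handles it. This branch is the easier of the two; the genuine difficulty throughout is the parameter calibration in the $B$-branch, which is why $h$ is defined the way it is.
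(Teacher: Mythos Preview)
Your overall inductive structure and the $A$-branch are essentially what the paper does (modulo a sign slip: when $\lambda_{k+1}<\lambda_k$ you have $\theta^2/\lambda_{k+1}>\theta^2/\lambda_k$, so $\rho_{k+1}\geq\rho_k$, not smaller; this makes the $A$-case go through immediately).

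The $B$-branch, however, has a genuine structural error. You propose to apply Proposition~\ref{pr:rc} at stepsize $\lambda_k$, obtaining that ``the new point lies in $\mathcal{N}_{(\theta^2)^2}(z_{k-1},\lambda_k)$'', and then bridge to $\lambda_{k+1}$ via Proposition~\ref{pr:linc}. But Proposition~\ref{pr:rc} is a statement about the solution of the regularized saddle-point problem \emph{at the same stepsize} $\lambda$ appearing in the neighborhood hypothesis. In step~\textbf{8} the point $\tilde z_{k+1}$ solves the problem at stepsize $\lambda_{k+1}$, not $\lambda_k$; applying Proposition~\ref{pr:rc} at $\lambda_k$ tells you nothing about $\tilde z_{k+1}$. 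So the first step of your $B$-branch is vacuous, and there is nothing to bridge afterward.

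The paper reverses the order. Starting from $\tilde z_k\in\mathcal{N}_{\theta^2}(z_k,\lambda_k)$ (recall $z_k=z_{k-1}$), it first uses Proposition~\ref{pr:linc} to push $\tilde z_k$ into a neighborhood at the \emph{new} stepsize $\lambda_{k+1}=(1+h)\lambda_k$. The bound $\|\tilde z_k-z_k\|<\rho_k/\sigma$ from the definition of $B$ controls the $\|z-\bpt z\|$ term in Proposition~\ref{pr:linc}, and the calibration $\theta(1+h)(1+h(1+1/\sigma))^2=1$ is exactly what turns the resulting estimate into $\tilde z_k\in\mathcal{N}_{\theta}(z_k,\lambda_{k+1})$ (note: $\mathcal{N}_\theta$, not $\mathcal{N}_{\theta^2}$). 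Only then is Proposition~\ref{pr:rc} invoked, now legitimately at stepsize $\lambda_{k+1}$, to conclude $\tilde z_{k+1}\in\mathcal{N}_{\theta^2}(z_k,\lambda_{k+1})$. You have all the right ingredients but in the wrong order; swap them and the argument works.
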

\begin{proof}
  We will use induction on $k\geq 1$ 
 for proving (a).
  In view of the initialization of
  Algorithm 1, this inclusion holds
  trivially for $k=1$.
  Suppose that this inclusion holds for $k={k_0}$. We shall consider two
  possibilities.

  \medskip

  \noindent
  (i) ${k_0}\in A$:
  It follows from Proposition~\ref{pr:relax2} and the update rules in
  steps \textbf{2} and \textbf{4} that
  \begin{align*}
    \sqrt{\Psi_{\So,z_{k_0},\lambda_{{k_0}+1}}(\tilde z_{{k_0}})}\leq
    \sqrt{\Psi_{\So,z_{{k_0}-1},\lambda_{k_0}}(\tilde z_{{k_0}})}
    \leq \rho(\tilde y_{{k_0}},\theta^2/\lambda_{{k_0}})
    \leq \rho(\tilde y_{{k_0}},\theta^2/\lambda_{{k_0}+1})
   \end{align*}
   where the second inequality follows from the inclusion
   $\tilde z_{{k_0}}\in\mathcal{N}_{\theta^2}(z_{{k_0}-1},\lambda_{{k_0}})$
   and \eqref{eq:viz.2}; and the third inequality follows from step
   \textbf{2} and Definition~\ref{df:rho}.
   It follows from the above inequalities and 
   \eqref{eq:viz.2}
   that $\tilde z_{{k_0}}\in \mathcal{N}_{\theta^2}( z_{{k_0}},\lambda_{{k_0}+1})$.
 By step \textbf{3}, $\tilde z_{k_0+1}=\tilde z_{k_0}$. Therefore,
  the inclusion of Item (a) holds for $k={k_0}+1$ in case (i).

   \medskip

   \noindent
   (ii) ${k_0}\in B$: In this case, by step \textbf{7},
   $z_{{k_0}}=z_{{k_0}-1}$ and, using definition~\eqref{eq:AB}, the
   notation~\eqref{eq:zks}, and the assumption that the inclusion in
   Item (a) holds for $k={k_0}$ we conclude that
   \begin{align*}
     \norm{\tilde z_{{k_0}}-z_{{k_0}}}<\rho_{{k_0}}/\sigma,
     \quad
     \tilde z_{{k_0}}\in \mathcal{N}_{\theta^2}(z_{{k_0}},\lambda_{{k_0}}),
     \quad
     \sqrt{\Psi_{\So,z_{{k_0}},\lambda_{{k_0}}}(\tilde z_{{k_0}})} \leq
       \rho_{{k_0}}.
  \end{align*}
  Direct use of the definitions of $h$, $\tau$, and step \textbf{6}
  gives $\lambda_{{{k_0}}+1}=(1+h)\lambda_{{k_0}}$. Defining
  $\rho'=\sqrt{\Psi_{\So,z_{{k_0}},\lambda_{{k_0}+1}}
    (\tilde z_{{k_0}})}$,
  it follows from the above inequalities and from
  Proposition~\ref{pr:linc} that,
  \begin{align*}
  \rho'\leq
  (1+h)\sqrt{\Psi_{\So,z_{{k_0}},\lambda_{{k_0}}}
  (\tilde z_{{k_0}})}
  +h\norm{\tilde z_{{k_0}}-z_{{k_0}}}\leq
  (1+h(1+1/\sigma))\rho_{{k_0}}.
 \end{align*}
 Therefore,
 \begin{align*}
   \lambda_{{k_0}+1}\left(
   \dfrac{L_0+\inner{L_g}{|\tilde y_{{k_0}}|}}{2}+
   \dfrac{2\norm{L_g}}{3}\rho'\right)\rho'
   & \leq(1+h)\left(1+h\left(1+\dfrac{1}{\sigma}\right)
     \right)^2\\
   & \times
     \lambda_{{k_0}}\left(
     \dfrac{L_0+\inner{L_g}{|\tilde y_{{k_0}}|}}{2}+
     \dfrac{2\norm{L_g}}{3}
     \rho_{{k_0}}
     \right)\rho_{{k_0}}
   \\
   &
     =(1+h)\left(1+h\left(1+\dfrac{1}{\sigma}\right)\right)^2\theta^2=\theta,
 \end{align*}
   where
   we also have used Definition \ref{df:rho}
   and the definition of $h$ (in the initialization
   of Algorithm 1).
 %
  %
  It follows from the above inequality, 
  the definition of $\rho'$ and \eqref{eq:viz}  that
  \[
   \tilde z_{{k_0}}\in\mathcal{N}_\theta
   (z_{{k_0}},\lambda_{{k_0}+1}).
  \]
  Using this inclusion, step \textbf{8} and 
  Proposition~\ref{pr:rc}
  we conclude that the inclusion in Item (a) also holds
  for $k={k_0}+1$.

  Item (b) follows trivially from Item (a), 
  \eqref{eq:viz.2}  and \eqref{eq:zks}. 
  Item (c) follows from the fact that $y_0\geq 0$, 
  the definitions
  of steps \textbf{3}, \textbf{4}, and the last 
  part of Proposition~\ref{pr:relax2}.
\end{proof}

\section*{Algorithm 1 as a 
realization 
of the large-step r-HPE method}

In this subsection,
we will show that 
a subsequence generated by Algorithm 1
happens to be a sequence generated
by the large-step r-HPE method 
described in~\eqref{eq:rhpe}
for solving a monotone inclusion problem
associated with \eqref{eq:np}.
This result will be instrumental
for evaluating (in the next section)
the iteration-complexity of Algorithm 1.
In fact, we will prove that iterations 
with $k\in A$, where steps
\textbf{2}, \textbf{3}, \textbf{4} are 
executed, are large-step r-HPE iterations
for the monotone inclusion problem
\begin{align}
  \label{eq:mip2}
  0\in T(z):=\left(\So+\Nc_{\R^n\times\R^m_+}\right)(z),\qquad
  z=(x,y)\in\R^n\times\R^m,
\end{align}
where the operator $\So$ is defined in~\eqref{def:lag.e}.

Define, for $k=1,2,\dots$,
\begin{align}
\label{eq:s60}
w_k&=(g(\tilde x_k)+\lambda_k^{-1}y_{k-1})_-,\qquad v_k=\So(\tilde
z_k)-(0,w_k), \qquad 
\varepsilon_k=\inner{\tilde y_k}{w_k},
\end{align}
where $\tilde z_k$ is defined in \eqref{eq:zks}.
We will show that, whenever $k\in A$, 
the variables
$\tilde z_k$, $v_k$, and $\varepsilon_k$ 
provide an approximated
solution of the proximal inclusion-equation system
\begin{align*}
  v\in (\So+\Nc_{\R^n\times\R^m_+})(z),\qquad
  \lambda_k v+z-z_{k-1}=0,
\end{align*}
as required in the first line of~\eqref{eq:rhpe}.
We divided the proof of this fact in two parts, 
the next proposition 
and the subsequent lemma.
\begin{proposition}
\label{pr:ma}
  For $k=1,2,\ldots$,
  \begin{enumerate}
  \item[\emph{(a)}] $-(0,w_k)
   \in\partial_{\varepsilon_k}
        \delta_{\R^n\times\R^m_+}(\tilde z_k)=
  \Nc_{\R^n\times\R^m_+}^
  {[\varepsilon_k]}(\tilde z_k) $;
 \item[\emph{(b)}] $v_k\in(\So+\Nc_{\R^n\times\R^m_+}^{[\varepsilon_k]})
      (\tilde z_k)\subset
     (\So+\Nc_{\R^n\times\R^m_+})^{[\varepsilon_k]}(\tilde z_k)$;
 \item[\emph{(c)}] $\norm{ \lambda_kv_k+\tilde z_k-z_{k-1}}^2
    +2\lambda_k\varepsilon_k = 
    \Psi_{\So,z_{k-1},\lambda_k}(\tilde z_k) \leq \rho_k^2$.
  \end{enumerate}
\end{proposition}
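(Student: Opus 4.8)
The plan is to recognize Proposition~\ref{pr:ma} as essentially an unpacking of Lemma~\ref{lm:loc3} applied to the point $\tilde z_k=(\tilde x_k,\tilde y_k)$ with regularization center $\bpt z=z_{k-1}$ and stepsize $\lambda=\lambda_k$, together with the bound already recorded in Proposition~\ref{pr:on.theta2}(b). Concretely, I would first note that the triple $(w_k,v_k,\varepsilon_k)$ defined in \eqref{eq:s60} is exactly the triple $(w,v,\varepsilon)$ from the hypothesis of Lemma~\ref{lm:loc3} with $z=\tilde z_k$ and $\bpt z=z_{k-1}$: indeed $w_k=(g(\tilde x_k)+\lambda_k^{-1}y_{k-1})_-$, $v_k=\So(\tilde z_k)-(0,w_k)$, $\varepsilon_k=\inner{\tilde y_k}{w_k}$. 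To invoke that lemma, I must check its standing hypothesis $z=(x,y)\in\R^n\times\R^m_+$, i.e.\ that $\tilde z_k\in\R^n\times\R^m_+$; this holds by the initialization of Algorithm~1 (which puts $(\tilde x_1,\tilde y_1)\in\R^n\times\R^m_+$) for $k=1$, and for $k\ge 2$ by step~\textbf{3} (which copies $\tilde z_{k_0}$, already in the set) or step~\textbf{8} (whose $\arg\max$ is taken over $y\in\R^m_+$), so $\tilde z_k\in\R^n\times\R^m_+$ always. Similarly $z_{k-1}\in\R^n\times\R^m$ is immediate (in fact $z_{k-1}\in\R^n\times\R^m_+$ by Proposition~\ref{pr:on.theta2}(c), though only membership in $\R^n\times\R^m$ is needed here).

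With these hypotheses verified, items (a) and (b) follow verbatim from Lemma~\ref{lm:loc3}(a) and (b): part (a) is the identification $-(0,w_k)\in\partial_{\varepsilon_k}\delta_{\R^n\times\R^m_+}(\tilde z_k)=\Nc_{\R^n\times\R^m_+}^{[\varepsilon_k]}(\tilde z_k)$, and part (b) is the chain of inclusions $v_k\in(\So+\Nc_{\R^n\times\R^m_+}^{[\varepsilon_k]})(\tilde z_k)\subset(\So+\Nc_{\R^n\times\R^m_+})^{[\varepsilon_k]}(\tilde z_k)$, both of which are instances of Lemma~\ref{lm:loc3} with $T=\So+\Nc_{\R^n\times\R^m_+}$ (maximal monotone by Proposition~\ref{pr:rf.sdp}). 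For item~(c), the identity $\norm{\lambda_k v_k+\tilde z_k-z_{k-1}}^2+2\lambda_k\varepsilon_k=\Psi_{\So,z_{k-1},\lambda_k}(\tilde z_k)$ is precisely the identity in Lemma~\ref{lm:loc3}(b) with the substitutions above; and the final inequality $\Psi_{\So,z_{k-1},\lambda_k}(\tilde z_k)\le\rho_k^2$ is obtained by squaring Proposition~\ref{pr:on.theta2}(b), recalling the notation $\rho_k=\rho(\tilde y_k,\theta^2/\lambda_k)$ from \eqref{eq:zks}.

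There is no real obstacle here — the proposition is a bookkeeping step that translates the abstract Lemma~\ref{lm:loc3} into the notation of Algorithm~1 — so the main thing to be careful about is matching the definitions exactly (in particular that the $w_k$ in \eqref{eq:s60} coincides with the minimizer $(g(x)+\lambda^{-1}\bpt y)_-$ appearing in \eqref{eq:tildew}, which is what makes the identity in (c) hold rather than merely an inequality) and that the membership $\tilde z_k\in\R^n\times\R^m_+$ required by Lemma~\ref{lm:loc3} is genuinely available for every $k$, which is why I would spell out the two-case argument (step~\textbf{3} vs.\ step~\textbf{8}) for $k\ge2$. The proof therefore reads: ``Items (a), (b) and the identity in (c) follow from Lemma~\ref{lm:loc3} applied with $\bpt z=z_{k-1}$, $z=\tilde z_k$, $\lambda=\lambda_k$ — whose hypothesis $\tilde z_k\in\R^n\times\R^m_+$ holds by the initialization of Algorithm~1 and steps~\textbf{3}, \textbf{8} — and the definitions \eqref{eq:s60}; the final inequality in (c) follows by squaring Proposition~\ref{pr:on.theta2}(b) and the definition of $\rho_k$ in \eqref{eq:zks}.''
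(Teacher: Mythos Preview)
Your proposal is correct and follows essentially the same route as the paper: items (a), (b) and the equality in (c) are obtained from Lemma~\ref{lm:loc3} via the definitions \eqref{eq:zks} and \eqref{eq:s60}, and the inequality in (c) from Proposition~\ref{pr:on.theta2}(b). Your additional care in verifying the hypothesis $\tilde z_k\in\R^n\times\R^m_+$ of Lemma~\ref{lm:loc3} via the initialization and steps~\textbf{3}, \textbf{8} is a welcome explicitation that the paper leaves implicit.
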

\begin{proof}
 Items (a), (b) and the equality in Item (c) 
 follow from definitions \eqref{eq:zks}, 
\eqref{eq:s60} and Items (a) and (b) of 
Lemma~\ref{lm:loc3}.
The inequality in Item (c) follows from 
Proposition~\ref{pr:on.theta2}(b).
\end{proof}
%
Define
\begin{align}
  \begin{aligned}
    A_k&:=\{j\in \N\;|\;j\leq k,\; 
   \text{steps {\bf 2, 3, 4} are executed at 
  iteration $j$}\},
    \\
    B_k&:=\{j\in\N\;|\;j\leq k,\; 
   \text{steps {\bf 6, 7, 8} are executed at 
   iteration $j$} \}
  \end{aligned}
\end{align}
and observe that 
\[
 A=\bigcup_{k\in \N}A_k,\qquad B=\bigcup_{k\in \N}B_k. 
\] 

From now on,  $\# C$ stands for the number of elements of
a set $C$. 
To further simplify the converge analysis, define
\begin{align}
\label{eq:s70}
  I=\set{i\in\N\;|\; 1 \leq i \leq \# A},\quad
  k_0=0,\quad k_i=\text{$i$-th element of }A.
\end{align}
Note that $  k_0<k_1<k_2\cdots$, $A=\set{k_i\;|\; i\in I}$ and,
in view of \eqref{eq:AB} and step \textbf{7}
of Algorithm 1,
 \begin{align}\label{eq:a.is.hpe.fp}
        z_k=z_{k_{i-1}},\qquad
        \text{ for }\;\;k_{i-1}\leq k <k_i,\;\;     \forall i\in I.
  \end{align}
In particular, we have
\begin{align}
  \label{eq:kminuone}
  z_{k_i-1}=z_{k_{i-1}}\qquad \forall i \in I.
\end{align}

In the next lemma we show that
for indexes in the set $A$, Algorithm 1
generates a subsequence which can be regarded
as a realization of the large-step r-HPE method
described in \eqref{eq:rhpe}, for solving the problem
\eqref{eq:mip2}.

 \begin{lemma}
    \label{lm:hpe}
  The sequences $(z_{k_i})_{i\in I}$, 
 $(\tilde z_{k_i})_{i\in I}$, $(v_{k_i})_{i\in I}$,
  $(\varepsilon_{k_i})_{i\in I}$, 
  $(\lambda_{k_i})_{i\in I}$ are generated by
  a realization of the \emph{r-HPE method} described in \eqref{eq:rhpe}
  for solving \eqref{eq:mip2}, that is, $0 \in (\So+\Nc_{\R^n\times\R^m_+})(z)$, 
  in the following sense:
  for all $i\in I$, 
    \begin{align}\label{eq:a.is.hpe.lp}
      \begin{aligned}
        &v_{k_i}\in(\So+\Nc_{\R^n\times\R^m_+}^
        {[\varepsilon_{k_i}]})(\tilde z_{k_i})\subset
        (\So+\Nc_{\R^n\times\R^m_+})^{[\varepsilon_{k_i}]}
        (\tilde z_{k_i}),  \\
        &\norm{ \lambda_{k_i}v_{k_i} +\tilde
          z_{k_i}-z_{k_{i-1}}}^2+2\lambda_{k_i}
        \varepsilon_{k_i}\leq\rho_{k_i}^2 \leq \sigma^2
        \norm{\tilde z_{k_i}-z_{k_{i-1}}}^2,\\
        &z_{k_i}=z_{k_{i-1}}-\tau\lambda_{k_i}v_{k_i}.
      \end{aligned}
    \end{align}
  Moreover, if $I$ is finite and $i_{\mathrm{M}}:=\max I$ then
  $z_k=z_{k_{i_{\mathrm{M}}}}$ for $k\geq k_{i_{\mathrm{M}}}$.
\end{lemma}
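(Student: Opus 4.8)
\textbf{Proof plan for Lemma~\ref{lm:hpe}.}
The plan is to verify, term by term, the three displayed relations in \eqref{eq:a.is.hpe.lp} for an arbitrary $i\in I$, and then to handle the ``moreover'' clause separately by an elementary argument using \eqref{eq:a.is.hpe.fp}. The first line of \eqref{eq:a.is.hpe.lp} — the enlargement inclusion $v_{k_i}\in(\So+\Nc_{\R^n\times\R^m_+}^{[\varepsilon_{k_i}]})(\tilde z_{k_i})\subset(\So+\Nc_{\R^n\times\R^m_+})^{[\varepsilon_{k_i}]}(\tilde z_{k_i})$ — is nothing but Proposition~\ref{pr:ma}(b) specialized to the index $k=k_i$, so nothing new is needed there. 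Likewise, the update equation $z_{k_i}=z_{k_{i-1}}-\tau\lambda_{k_i}v_{k_i}$ in the third line follows by combining step \textbf{4} of Algorithm~1 (which defines $z_{k_i}$ as $z_{k_i-1}-\tau\lambda_{k_i}v_{k_i}$, using the explicit expressions for $v_{k_i}$ in \eqref{eq:s60} and Proposition~\ref{pr:relax2}) with the identity $z_{k_i-1}=z_{k_{i-1}}$ recorded in \eqref{eq:kminuone}. Here the key observation is simply that between two consecutive $A$-indices only $B$-type iterations occur, during which $z$ is frozen by step \textbf{7}, which is precisely the content of \eqref{eq:a.is.hpe.fp}.

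The substance of the lemma is the chain of inequalities in the second line. I would get the first inequality,
\[
\norm{\lambda_{k_i}v_{k_i}+\tilde z_{k_i}-z_{k_{i-1}}}^2+2\lambda_{k_i}\varepsilon_{k_i}\leq\rho_{k_i}^2,
\]
directly from Proposition~\ref{pr:ma}(c) evaluated at $k=k_i$, again after rewriting $z_{k_i-1}$ as $z_{k_{i-1}}$ via \eqref{eq:kminuone}. The second inequality, $\rho_{k_i}^2\leq\sigma^2\norm{\tilde z_{k_i}-z_{k_{i-1}}}^2$, is exactly the defining condition for membership in the set $A$: since $k_i\in A$, definition \eqref{eq:AB} together with the notation \eqref{eq:zks} (in particular $\rho_{k_i}=\rho(\tilde y_{k_i},\theta^2/\lambda_{k_i})$) gives $\rho_{k_i}\leq\sigma\norm{\tilde z_{k_i}-z_{k_i-1}}$, and one more use of \eqref{eq:kminuone} turns $z_{k_i-1}$ into $z_{k_{i-1}}$ before squaring.

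For the final assertion, suppose $I$ is finite with $i_{\mathrm M}=\max I$. Then no index $k>k_{i_{\mathrm M}}$ belongs to $A$, so every such iteration is a $B$-iteration and step \textbf{7} keeps $z$ unchanged; a trivial induction on $k\geq k_{i_{\mathrm M}}$ (base case $k=k_{i_{\mathrm M}}$, inductive step using $z_k=z_{k-1}$ for $k\notin A$) yields $z_k=z_{k_{i_{\mathrm M}}}$. I do not anticipate a real obstacle here: the proof is essentially bookkeeping, transcribing Propositions~\ref{pr:ma} and \ref{pr:relax2} and the set definition \eqref{eq:AB} into the reindexed notation $k_i$. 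The one point requiring a little care — and the closest thing to a ``hard part'' — is making sure the index shifts $z_{k_i-1}=z_{k_{i-1}}$ are applied consistently in every term (inside the norm, inside $\varepsilon_{k_i}$, and in the $\sigma$-bound), so that the r-HPE relations \eqref{eq:rhpe} are matched with $z_{i-1}$ replaced by $z_{k_{i-1}}$ rather than by $z_{k_i-1}$.
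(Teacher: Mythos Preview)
Your proposal is correct and follows essentially the same route as the paper's own proof: both verify the three displayed relations by invoking Proposition~\ref{pr:ma}(b) for the inclusions, Proposition~\ref{pr:ma}(c) together with \eqref{eq:kminuone} for the first inequality, the definition of $A$ in \eqref{eq:AB} plus \eqref{eq:kminuone} for the second inequality, and step~\textbf{4} with \eqref{eq:s60} and \eqref{eq:kminuone} for the update equation; the ``moreover'' clause is handled in both by observing that past $k_{i_{\mathrm M}}$ only $B$-iterations occur so step~\textbf{7} freezes $z$. Your added citation of Proposition~\ref{pr:relax2} to unpack step~\textbf{4} is a harmless extra detail.
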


\begin{proof}
The two inclusions in the first line 
of \eqref{eq:a.is.hpe.lp} 
follow trivially from Proposition~\ref{pr:ma}(b).
The first inequality in the second line of 
\eqref{eq:a.is.hpe.lp}
follows from \eqref{eq:kminuone} 
and 
Proposition~\ref{pr:ma}(c); the
second inequality follows from the 
inclusion $k_i\in A$, \eqref{eq:AB},
step \textbf{1} of Algorithm 1 and
\eqref{eq:kminuone}.
The equality in the last line of 
\eqref{eq:a.is.hpe.lp} follows
from the inclusion $k_i\in A$, 
\eqref{eq:AB} step \textbf{4} of Algorithm 1,
\eqref{eq:kminuone} and \eqref{eq:s60}.
Finally, the last statement of the 
lemma is a direct consequence of \eqref{eq:zks},
\eqref{eq:AB} and step \textbf{7} of Algorithm 1.
\end{proof}

As we already observed in Proposition~\ref{pr:rf.sdp}, 
\eqref{eq:mip2} and
\eqref{eq:kkt.p} are equivalent, in the sense that both problems have
the same solution set.
From now on we will use the notation $\mathcal{K}$ for this solution set,
that is,
\begin{align}
  \label{eq:def.k}
  \begin{aligned}
    \mathcal{K}
    &= \left(\So+\Nc_{\R^n\times\R^m_+}\right)^{-1}(0)\\
    &=\Set{(x,y)\in\R^n\times\R^m\;\left|\;\nabla f(x)+\nabla g(x)y=0,\;
    g(x) \leq 0,\; y \geq 0,\; \inner{y}{g(x)}=0
   }\right.\;.
  \end{aligned}
\end{align}
We assumed in (O.3) that this set is nonempty.
Let $z^*=(x^*,y^*)$ be the projection of $z_0=(x_0,y_0)$ onto $\sol$ and
$d_0$ the distance from $z_0$ to $\sol$,
\begin{align}
\label{eq:def.dz}
  z^*\in\sol,\quad d_0=\norm{z^*-z_0}=\min_{z\in\sol}\norm{z-z_0}.
\end{align}

To complement Lemma~\ref{lm:hpe}, we will prove that the
\emph{large-step condition} for the
large-step r-HPE method (stated in
Theorem~\ref{lm:rhpe2}) is satisfied 
for the realization of the method
presented in Lemma \ref{lm:hpe}.
Define
\begin{align}
\label{def:c.eta2}
  c&:=\dfrac{L_0+\inner{L_g}{|y_0|}}{2}+
     \left[\dfrac12+\dfrac{1/2+2\sigma/3}
     {\sqrt{1-\sigma^2}}
     \right]d_0\norm{L_g},\qquad \eta:=\dfrac{\theta^2}{\sigma c}.
\end{align}

\begin{proposition}
\label{pr:large}
Let $z^*\in \mathcal{K}$
and $d_0$, and $\eta$ as in \eqref{eq:def.dz},
and \eqref{def:c.eta2}, respectively.  
For all $i\in I$,
    \begin{align}
\label{eq:ineq.ls02}
 \norm{z^*-z_{k_i}}\leq d_0,\quad 
  \norm{z^*-\tilde z_{k_i}} \leq \dfrac{d_0}{\sqrt{1-\sigma^2}},\quad
      \norm{\tilde z_{k_i}-z_{k_{i-1}}} \leq 
   \dfrac{d_0}{\sqrt{1-\sigma^2}}.
    \end{align} 
   As a consequence, 
\begin{align}
\label{eq:lsc.2}
    \lambda_{k_i}
    \norm{\tilde z_{k_i}-z_{k_{i-1}}}\geq \eta.
    \end{align} 
\end{proposition}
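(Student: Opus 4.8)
The plan is to prove the three norm bounds in \eqref{eq:ineq.ls02} by induction on $i\in I$, and then derive the large-step inequality \eqref{eq:lsc.2} from them together with the structure of iterations in $B$. First I would observe that by Lemma~\ref{lm:hpe}, the subsequence indexed by $A$ is a genuine r-HPE run for $0\in(\So+\Nc_{\R^n\times\R^m_+})(z)$, so Proposition~\ref{pr:a1} applies to it. In particular, part (b) of that proposition (with $z^*\in\sol$, which here plays the role of $T^{-1}(0)$) gives $\norm{z^*-z_{k_i}}\le\norm{z^*-z_{k_{i-1}}}$, and since $z_{k_0}=z_0$ and $\norm{z^*-z_0}=d_0$ by \eqref{eq:def.dz}, a trivial induction yields $\norm{z^*-z_{k_i}}\le d_0$ for all $i$. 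The other two bounds then follow directly from Proposition~\ref{pr:a1}(d) applied at step $i$: $\norm{z^*-\tilde z_{k_i}}\le\norm{z^*-z_{k_{i-1}}}/\sqrt{1-\sigma^2}\le d_0/\sqrt{1-\sigma^2}$ and likewise $\norm{\tilde z_{k_i}-z_{k_{i-1}}}\le d_0/\sqrt{1-\sigma^2}$, using that $z_{k_{i-1}}$ is exactly the ``previous iterate'' of the r-HPE realization. (One should double-check the edge case $i=1$, where $k_0=0$ and $z_{k_0}=z_0$, which is covered since $z_{k_i-1}=z_{k_{i-1}}$ by \eqref{eq:kminuone}.)

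Next, for \eqref{eq:lsc.2} the key is to lower-bound $\lambda_{k_i}\norm{\tilde z_{k_i}-z_{k_{i-1}}}$ by $\eta=\theta^2/(\sigma c)$. Since $k_i\in A$, the test in step~\textbf{1} gives $\rho_{k_i}\le\sigma\norm{\tilde z_{k_i}-z_{k_{i-1}}}$ (using \eqref{eq:kminuone} to replace $z_{k_i-1}$ by $z_{k_{i-1}}$), so it suffices to show $\lambda_{k_i}\rho_{k_i}\ge\theta^2/c$. Now $\rho_{k_i}=\rho(\tilde y_{k_i},\theta^2/\lambda_{k_i})$ is by Definition~\ref{df:rho} the largest root of
\[
\left(\frac{L_0+\inner{L_g}{|\tilde y_{k_i}|}}{2}+\frac{2\norm{L_g}}{3}\rho\right)\rho=\frac{\theta^2}{\lambda_{k_i}},
\]
hence $\lambda_{k_i}\left(\frac{L_0+\inner{L_g}{|\tilde y_{k_i}|}}{2}+\frac{2\norm{L_g}}{3}\rho_{k_i}\right)\rho_{k_i}=\theta^2$. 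If I can show that the coefficient $\frac{L_0+\inner{L_g}{|\tilde y_{k_i}|}}{2}+\frac{2\norm{L_g}}{3}\rho_{k_i}$ is at most $c$, then $\lambda_{k_i}\rho_{k_i}\cdot c\ge\theta^2$, i.e.\ $\lambda_{k_i}\rho_{k_i}\ge\theta^2/c$, and combining with the step~\textbf{1} test gives $\lambda_{k_i}\norm{\tilde z_{k_i}-z_{k_{i-1}}}\ge\lambda_{k_i}\rho_{k_i}/\sigma\ge\theta^2/(\sigma c)=\eta$.

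So the main obstacle — and the only real work — is bounding that coefficient by $c$. For the $\inner{L_g}{|\tilde y_{k_i}|}$ term I would write $\tilde y_{k_i}=y^*+(\tilde y_{k_i}-y^*)$ and use Cauchy–Schwarz: $\inner{L_g}{|\tilde y_{k_i}|}\le\inner{L_g}{|y^*|}+\norm{L_g}\,\norm{\tilde y_{k_i}-y^*}\le\inner{L_g}{|y^*|}+\norm{L_g}\,d_0/\sqrt{1-\sigma^2}$ by the already-established second bound in \eqref{eq:ineq.ls02}; I also need to compare $\inner{L_g}{|y^*|}$ with $\inner{L_g}{|y_0|}$, or more cleanly relate everything back to $z_0$ using $\norm{y^*-y_0}\le d_0$, which accounts for the extra $\tfrac12 d_0\norm{L_g}$ piece in the definition \eqref{def:c.eta2} of $c$. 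For the $\tfrac{2\norm{L_g}}{3}\rho_{k_i}$ term I would bound $\rho_{k_i}\le\norm{\tilde z_{k_i}-z_{k_{i-1}}}$ (which holds when $\sigma\le 1$, from the step~\textbf{1} test, or directly since $\rho_{k_i}^2\ge\Psi_{\So,z_{k_{i-1}},\lambda_{k_i}}(\tilde z_{k_i})\ge\norm{\tilde z_{k_i}-z_{k_i-1}}^2$ — actually I should use Proposition~\ref{pr:on.theta2}(b) together with \eqref{eq:Psi.2}) and then $\norm{\tilde z_{k_i}-z_{k_{i-1}}}\le d_0/\sqrt{1-\sigma^2}$; the factor $\tfrac23\cdot\tfrac1{\sqrt{1-\sigma^2}}$ matches the $\tfrac{1/2+2\sigma/3}{\sqrt{1-\sigma^2}}$ coefficient in $c$ after reconciling the $\tfrac12$ and $\sigma$ factors. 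Assembling these three estimates and matching constants against \eqref{def:c.eta2} is the delicate bookkeeping; once the coefficient $\le c$ bound is in hand, \eqref{eq:lsc.2} follows immediately as above.
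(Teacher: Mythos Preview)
Your proposal is correct and follows essentially the same route as the paper. The paper also gets \eqref{eq:ineq.ls02} from Lemma~\ref{lm:hpe} and Proposition~\ref{pr:a1}(b),(d), and then uses the step~\textbf{1} test together with Definition~\ref{df:rho} and the bounds in \eqref{eq:ineq.ls02} to push the coefficient below $c$. The only organizational difference is that the paper first replaces $\rho_{k_i}$ by $\sigma\norm{\tilde z_{k_i}-z_{k_{i-1}}}$ inside the quadratic (by monotonicity of the left side, since $\sigma\norm{\tilde z_{k_i}-z_{k_{i-1}}}\ge\rho_{k_i}$) and then bounds that coefficient by $c$, whereas you keep $\rho_{k_i}$ in the root identity and only invoke the test at the end.

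Two small clean-ups: (i) your parenthetical attempt to justify $\rho_{k_i}\le\norm{\tilde z_{k_i}-z_{k_{i-1}}}$ via $\Psi$ points the wrong way (Proposition~\ref{pr:on.theta2}(b) gives $\rho_{k_i}\ge\sqrt{\Psi}$, not $\le$); just use the step~\textbf{1} test directly. (ii) To match the constant $c$ in \eqref{def:c.eta2} exactly you must retain the $\sigma$ in that test, i.e.\ bound $\tfrac{2\norm{L_g}}{3}\rho_{k_i}\le \tfrac{2\sigma\norm{L_g}}{3}\,\dfrac{d_0}{\sqrt{1-\sigma^2}}$; dropping the $\sigma$ yields $2/3$ in place of $2\sigma/3$ and the coefficient bound fails to reach $c$.
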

\begin{proof}
Note first that \eqref{eq:ineq.ls02}
follows from Lemma \ref{lm:hpe},
items (c) and (d) of Proposition \ref{pr:a1},
\eqref{eq:kminuone} and
\eqref{eq:def.dz}. 
Using \eqref{eq:zks}, \eqref{eq:AB},
\eqref{eq:s70} and step \textbf{1} of
Algorithm 1 
we obtain
\begin{align*}
 \sigma\|\tilde z_{k_i}-z_{k_{i}-1}\|\geq 
\rho(\tilde y_{k_i},\theta^2/\lambda_{k_i})
\qquad \forall i\in I,
\end{align*}
which, in turn, combined with
the definition of $\rho(\cdot,\cdot)$
(see Definition~\ref{df:rho}) yields
\begin{align}
\label{eq:srh2}
 \left(
 \dfrac{L_0+\inner{L_g}{|\tilde y_{k_i}|}}{2}
 +\dfrac{2\norm{L_g}}{3}\sigma\norm{\tilde z_{k_i}-
 z_{k_{i}-1}}
 \right)\sigma\norm{\tilde z_{k_i}-
 z_{k_{i}-1}}\geq \dfrac{\theta^2}{\lambda_{k_i}}
\quad \forall i\in I.
\end{align}
Using the triangle inequality, 
\eqref{eq:def.dz} and the second inequality
in \eqref{eq:ineq.ls02} 
we obtain
\begin{align*}
\norm{z_0-\tilde z_{k_i}}\leq 
 d_0+\norm{z^*-\tilde z_{k_i}}\leq 
 d_0\left(1+\dfrac{1}{\sqrt{1-\sigma^2}}\right).
\end{align*}
Now, using the latter inequality, the fact that 
$\norm{z_0-\tilde z_{k_i}}\geq 
\norm{y_0-\tilde y_{k_i}}$ ($\forall i\in I$)
and the triangle inequality 
we find
\begin{align}
\nonumber 
\inner{L_g}{|\tilde y_{k_i}|}&\leq 
\norm{L_g}\norm{z_0-\tilde z_{k_i}}+
\inner{L_g}{|y_0|}\\
\label{eq:srh5}
&\leq d_0\norm{L_g}
\left(1+\dfrac{1}{\sqrt{1-\sigma^2}}\right)
+\inner{L_g}{|y_0|}\qquad \forall i\in I.
\end{align}
To finish the proof of \eqref{eq:lsc.2}, use 
\eqref{eq:kminuone}, substitute the terms in 
the right hand side of
the last inequalities in \eqref{eq:ineq.ls02}
and \eqref{eq:srh5} in the term 
inside the parentheses in \eqref{eq:srh2}
and use \eqref{def:c.eta2}.
\end{proof}

\section{Complexity  analysis}
\label{sec:rhpe}

In this section we study the pointwise
and ergodic iteration-complexity of Algorithm 1.
The main results are (essentially) a consequence of Lemma~\ref{lm:hpe} and
Proposition~\ref{pr:large} which guarantee that the (sub)sequences
$(z_{k_i})_{i\in I}$, $(\tilde z_{k_i})_{i\in I}$, $\dots$ can be regarded
as realizations of the large-step r-HPE method of Section~\ref{sec:sqp}, for
which pointwise and ergodic iteration-complexity results are known.

To study the ergodic iteration-complexity of Algorithm 1 we need to define
the ergodic sequences associated to $(\lambda_{k_i})_{i\in I}$,
$(\tilde z_{k_i})_{i\in I}$, $(v_{k_i})_{i\in I}$ and
$(\varepsilon_{k_i})_{i\in I}$, respectively (see \eqref{eq:d.eg}), namely
\begin{align}
\label{eq:ergodic.i}
  \begin{aligned}
    &\Lambda_i:=\tau\sum_{j=1} ^i\lambda_{k_j},\\
    &\tilde z_i^{\,a}=(\tilde x_i^a,
\tilde y_i^a):=
    \frac{1}{\;\Lambda_i}\;\tau\sum_{j=1}^i\lambda_{k_j} 
\tilde z_{k_j},
    \qquad\quad
    v_i^{\,a}:= \frac{1}{\;\Lambda_i}\;\tau\sum_{j=1}^i
    \lambda_{k_j} v_{k_j} ,
    \\
    &\varepsilon_i^{\,a}:=
    \frac{1}{\;\Lambda_i}\;\tau\sum_{j=1}^i\lambda_{k_j} (\varepsilon_{k_j}
    +\inner{\tilde z_{k_j}-\tilde z_i^{\,a}}{v_{k_j}-v_i^{\,a}}).
  \end{aligned}
\end{align}
Define also
\begin{align}
  \label{eq:lg2}
   \overline{\mathscr{L}}(x,y):=
  \begin{cases}
    f(x)+\inner{y}{g(x)},& y\geq 0\\
    -\infty,&\text{otherwise.}
  \end{cases}
\end{align}

Observe that
that 
a pair $(x,y)\in \mathcal{K}$, i.e., it is a solution 
of the KKT system  
\eqref{eq:kkt.p}
if and only if 
 $(0,0)\in \partial(\overline{\mathscr{L}}(\cdot,y)
    -\overline{\mathscr{L}}(x,\cdot))(x,y)$.
Since \eqref{eq:mip2} and \eqref{eq:kkt.p}
are equivalent, the latter observation
leads us to consider in this section
the notion of approximate
solution for \eqref{eq:mip2} which consists
in: for given tolerances $\overline \delta>0$
and $\overline\varepsilon>0$ find 
$((x,y),v,\varepsilon)$ such that
\begin{align}
\label{eq:def.aps}
 v\in \partial_\varepsilon(\overline{\mathscr{L}}(\cdot,y)
    -\overline{\mathscr{L}}(x,\cdot))(x,y),
\quad \norm{v}\leq \overline\delta,\quad 
\varepsilon\leq \overline\varepsilon.
\end{align} 

We will also consider as approximate solution
of \eqref{eq:mip2} any triple 
$((x,y),(p,q),\varepsilon)$ 
such that $\norm{(p,q)}\leq \overline\delta$,
$\varepsilon\leq \overline\varepsilon$ and 
\begin{align}
\label{eq:def.aps2}
 p=\nabla f(x)+\nabla g(x)y,\quad  g(x)+q\leq 0,
 \quad y\geq 0,\quad \inner{y}{g(x)+q}=-\varepsilon
\end{align} 
or
\begin{align}
\label{eq:def.aps3}
 p\in \partial_{x,\varepsilon'}
 \overline{\mathscr{L}}(x,y),\quad  g(x)+q\leq 0,
 \quad y\geq 0,\quad \inner{y}{g(x)+q}
 \geq -\varepsilon,
\end{align} 
where $\varepsilon':=\varepsilon+\inner{y}{g(x)+q}$.

It is worthing to compare the latter two
conditions with \eqref{eq:kkt.p} and also 
note that whenever
$\varepsilon'=0$ then \eqref{eq:def.aps3} reduces
to \eqref{eq:def.aps2}, that is, the latter condition
is a special case of \eqref{eq:def.aps3}.
Moreover, as Theorems \ref{th:pt.com}
and \ref{th:erg.com} will show, \eqref{eq:def.aps2}
and \eqref{eq:def.aps3} are related to the pointwise
and ergodic iteration-complexity of Algorithm 1, respectively. 

We start by studying 
rates of convergence of
Algorithm 1.

\begin{theorem}
\label{pr:conv.rate}
Let $(\tilde z_{k_i})_{i\in I}=
((\tilde x_{k_i},\tilde y_{k_i}))_{i\in I}$, 
$(v_{k_i})_{i\in I}$ 
and $(\varepsilon_{k_i})_{i\in I}$ be
(sub)sequences generated by \emph{Algorithm 1}
where the the set of indexes $I$ is defined
in \eqref{eq:s70}.  
Let also $(\tilde z_i^a)_{i\in I}=
((\tilde x_i^a,\tilde y_i^a))_{i\in I}$,  
$(v_i^a)_{i\in I}$
and 
$(\varepsilon_i^a)_{i\in I}$ be
as in~\eqref{eq:ergodic.i}.  
Then, for any $i\in I$,
 \begin{enumerate}
  \item[\emph{(a)}]\emph{{\bf [pointwise]}}
 \label{it:rhpe2-10}
    there exists $j\in \{1,\dots, i\}$
    such that 
\begin{align}
\label{eq:kkt-p}
 v_{k_j}\in \partial_{\varepsilon_{k_j}}\,
\left(\;\overline{\mathscr{L}}(\cdot, \tilde y_{k_j})-
\overline{\mathscr{L}}(\tilde x_{k_j},\cdot)\right)
(\tilde x_{k_j},\tilde y_{k_j})
\end{align}
and
\begin{align}
\label{eq:kkt-p-r} 
      \Norm{v_{k_j}}
      \leq\dfrac{d_0^2}{i\tau(1-\sigma)\eta},
      \qquad
      \varepsilon_{k_j}\leq
      \dfrac{\sigma^2d_0^3}
     {(i\tau)^{3/2}(1-\sigma^2)^{3/2}2\eta};
    \end{align}
\item[\emph{(b)}]\emph{{\bf [pointwise]}}
 there exists $j\in \{1,\dots, i\}$
 and $(p_j,q_j)\in \R^n\times \R^m$    
such that
 \begin{align}
 \label{eq:kkt-pp}
  \begin{aligned}
  &p_j=\nabla f(\tilde x_{k_j})+
  \nabla g(\tilde x_{k_j})\tilde y_{k_j}\,,\\
  &g(\tilde x_{k_j})+q_j\leq 0,
 \quad \tilde y_{k_j}\geq 0,
 \quad  \inner{\tilde y_{k_j}}{g(\tilde x_{k_j})+q_j}=-\varepsilon_{k_j}
  \end{aligned}
 \end{align}   
and
\begin{align}
\label{eq:kkt-p-rr} 
      \Norm{
       (p_j,q_j) 
       }
      \leq\dfrac{d_0^2}{i\tau(1-\sigma)\eta},
      \qquad
      \varepsilon_{k_j}\leq
      \dfrac{\sigma^2d_0^3}
     {(i\tau)^{3/2}(1-\sigma^2)^{3/2}2\eta};
    \end{align}

\item [\emph{(c)}]\emph{{\bf [ergodic]}}
\label{it:rhpe2-20}
   we have
   \begin{align}
    \label{eq:kkt-e}
    v_i^a\in \partial_{\varepsilon_{i}^a}\,
 \left(\;\overline{\mathscr{L}}(\cdot, \tilde y_{i}^a)- 
\overline{\mathscr{L}}(\tilde x_{i}^a,\cdot)\right)
(\tilde x_{i}^a,\tilde y_{i}^a) 
  \end{align}     
 and
    \begin{align}
     \label{eq:kkt-e-r}
      \norm{v_i^a}\leq 
      \dfrac{2d_0^2}{(i\tau)^{3/2}
       (\sqrt{1-\sigma^2})\eta},
      \qquad
\varepsilon_i^a\leq
 \dfrac{2d_0^3}{(i\tau)^{3/2}(1-\sigma^2)\eta};
    \end{align}
\item[\emph{(d)}]\emph{{\bf[ergodic]}}
  there exists 
    $(p_i^a,q_i^a)\in \R^n\times \R^m$ 
    such that
   \begin{align}
   \label{eq:spq}
	   \begin{aligned}
    &p_i^a\in \partial_{x,\varepsilon'_i}\,
   \overline{\mathscr{L}}
     (\tilde x_i^a,\tilde y_i^a),\\ 
     &g(\tilde x_i^a)+q_i^a\leq 0,\quad \tilde y_i^a\geq 0,\quad 
    \inner{\tilde y_i^a}{g(\tilde x_i^a)+q_i^a}\geq 
    -\varepsilon_i^a  
    \end{aligned}
	 \end{align}
and
    \begin{align}
     \label{eq:kkt-e-rr}
      \norm{(p_i^a,q_i^a)}\leq 
    \dfrac{2d_0^2}{(i\tau)^{3/2}
       (\sqrt{1-\sigma^2})\eta},
      \qquad
\varepsilon_i^a\leq
 \dfrac{2d_0^3}{(i\tau)^{3/2}(1-\sigma^2)\eta},
    \end{align}
 where $\varepsilon'_i:=\varepsilon_i^a+
 \inner{\tilde y_i^a}{g(\tilde x_i^a)+q_i^a}$. 

\end{enumerate}
\end{theorem}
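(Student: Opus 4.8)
The plan is to read off all four items from Theorem~\ref{lm:rhpe2} once we know it applies to the right realization. By Lemma~\ref{lm:hpe}, the (sub)sequences $(z_{k_i})_{i\in I}$, $(\tilde z_{k_i})_{i\in I}$, $(v_{k_i})_{i\in I}$, $(\varepsilon_{k_i})_{i\in I}$, $(\lambda_{k_i})_{i\in I}$ form a realization of the large-step r-HPE method~\eqref{eq:rhpe} for $0\in T(z):=(\So+\Nc_{\R^n\times\R^m_+})(z)$ with constant relaxation parameter $\tau$, and by Proposition~\ref{pr:large} this realization satisfies the large-step condition $\lambda_{k_i}\norm{\tilde z_{k_i}-z_{k_{i-1}}}\geq\eta$; hence Theorem~\ref{lm:rhpe2} applies to it with $d_0$ as in~\eqref{eq:def.dz}. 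The only ingredient I would need beyond Theorem~\ref{lm:rhpe2} is a translation of the enlargement $T^{[\varepsilon]}$ into $\varepsilon$-subdifferentials of $\overline{\mathscr{L}}$: since $\overline{\mathscr{L}}$ of~\eqref{eq:lg2} is a closed proper saddle function whose saddle-point operator is precisely $\So+\Nc_{\R^n\times\R^m_+}$, and since for each $k$ the data~\eqref{eq:s60} satisfy $v_k=\So(\tilde z_k)-(0,w_k)$ with $x$-component $\nabla f(\tilde x_k)+\nabla g(\tilde x_k)\tilde y_k$ (an exact $x$-gradient), $y$-component $-g(\tilde x_k)-w_k$, $w_k\geq 0$, $\inner{\tilde y_k}{w_k}=\varepsilon_k$, and $\tilde z_k\in\R^n\times\R^m_+$ (Proposition~\ref{pr:on.theta2}(c)), combining~\eqref{eq:npe} with Proposition~\ref{pr:ma}(a),(b) and separability of $(x',y')\mapsto\overline{\mathscr{L}}(x',\tilde y_k)-\overline{\mathscr{L}}(\tilde x_k,y')$ upgrades $v_k\in T^{[\varepsilon_k]}(\tilde z_k)$ to
\[
v_k\in\partial_{\varepsilon_k}\bigl(\overline{\mathscr{L}}(\cdot,\tilde y_k)-\overline{\mathscr{L}}(\tilde x_k,\cdot)\bigr)(\tilde x_k,\tilde y_k)\qquad\text{for all }k.
\]

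For item (a) I would apply Theorem~\ref{lm:rhpe2}(a) to the realization of Lemma~\ref{lm:hpe}, obtaining an index $j\in\{1,\dots,i\}$ with exactly the estimates~\eqref{eq:kkt-p-r}, and the inclusion~\eqref{eq:kkt-p} is the displayed inclusion above at $k=k_j$. For item (b) I would take the same $j$ and set $(p_j,q_j):=v_{k_j}$; by~\eqref{eq:s60} this gives $p_j=\nabla f(\tilde x_{k_j})+\nabla g(\tilde x_{k_j})\tilde y_{k_j}$, $g(\tilde x_{k_j})+q_j=-w_{k_j}\leq 0$, $\tilde y_{k_j}\geq 0$, $\inner{\tilde y_{k_j}}{g(\tilde x_{k_j})+q_j}=-\varepsilon_{k_j}$, i.e.\ \eqref{eq:kkt-pp}, while~\eqref{eq:kkt-p-rr} is just~\eqref{eq:kkt-p-r}.

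Item (c) is the one requiring a genuine argument. The displayed inclusion above holds for \emph{every} $j\in\{1,\dots,i\}$, so each $v_{k_j}$ is an $\varepsilon_{k_j}$-subgradient of the primal--dual pair of $\overline{\mathscr{L}}$ at $(\tilde x_{k_j},\tilde y_{k_j})$. I would invoke the standard transportation estimate for $\varepsilon$-subgradients of saddle functions (a convexity/telescoping argument exploiting that $\overline{\mathscr{L}}$ is convex in $x$ and concave in $y$) applied to the $\Lambda_i^{-1}\tau\lambda_{k_j}$-convex combination defining $(\tilde z_i^a,v_i^a,\varepsilon_i^a)$ in~\eqref{eq:ergodic.i} --- the cross term $\inner{\tilde z_{k_j}-\tilde z_i^a}{v_{k_j}-v_i^a}$ in $\varepsilon_i^a$ being precisely the surplus produced by that argument --- to obtain~\eqref{eq:kkt-e}; the bounds~\eqref{eq:kkt-e-r} are then Theorem~\ref{lm:rhpe2}(b) for the same realization, since~\eqref{eq:ergodic.i} is~\eqref{eq:d.eg} specialized to $(z_{k_i})_{i\in I},\dots$.

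For item (d) I would decompose~\eqref{eq:kkt-e}: as $(x,y)\mapsto\overline{\mathscr{L}}(x,\tilde y_i^a)-\overline{\mathscr{L}}(\tilde x_i^a,y)$ is separable, writing $v_i^a=:(p_i^a,q_i^a)$ there are $\varepsilon_1,\varepsilon_2\geq 0$ with $\varepsilon_1+\varepsilon_2=\varepsilon_i^a$, $p_i^a\in\partial_{x,\varepsilon_1}\overline{\mathscr{L}}(\tilde x_i^a,\tilde y_i^a)$ and $q_i^a\in\partial_{\varepsilon_2}\bigl(-\overline{\mathscr{L}}(\tilde x_i^a,\cdot)\bigr)(\tilde y_i^a)$; since $-\overline{\mathscr{L}}(\tilde x_i^a,\cdot)=-f(\tilde x_i^a)-\inner{\cdot}{g(\tilde x_i^a)}+\delta_{\R^m_+}$, the latter forces $q_i^a=-g(\tilde x_i^a)-w$ with $w\geq 0$ and $\inner{\tilde y_i^a}{w}\leq\varepsilon_2$. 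Then $g(\tilde x_i^a)+q_i^a=-w\leq 0$, $\tilde y_i^a\geq 0$ (a convex combination of the $\tilde y_{k_j}\geq 0$), $\inner{\tilde y_i^a}{g(\tilde x_i^a)+q_i^a}=-\inner{\tilde y_i^a}{w}\geq-\varepsilon_2\geq-\varepsilon_i^a$, so $\varepsilon_i':=\varepsilon_i^a+\inner{\tilde y_i^a}{g(\tilde x_i^a)+q_i^a}\geq\varepsilon_i^a-\varepsilon_2=\varepsilon_1\geq 0$ and hence $p_i^a\in\partial_{x,\varepsilon_1}\overline{\mathscr{L}}(\tilde x_i^a,\tilde y_i^a)\subseteq\partial_{x,\varepsilon_i'}\overline{\mathscr{L}}(\tilde x_i^a,\tilde y_i^a)$; this is~\eqref{eq:spq}, and~\eqref{eq:kkt-e-rr} equals~\eqref{eq:kkt-e-r} because $(p_i^a,q_i^a)=v_i^a$. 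I expect the transportation estimate in item (c) to be the main obstacle --- it is the only step that is not either a re-description of objects already in hand or a direct citation of Theorem~\ref{lm:rhpe2}.
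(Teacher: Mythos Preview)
Your plan is correct and matches the paper's proof: the paper likewise invokes Lemma~\ref{lm:hpe} and Proposition~\ref{pr:large} to feed Theorem~\ref{lm:rhpe2}, then upgrades the enlargement inclusions to $\varepsilon$-subdifferential inclusions via the equivalences in Proposition~\ref{pr:a7} (your separability argument) and handles the ergodic inclusion~\eqref{eq:kkt-e} by the transportation formula packaged as Proposition~\ref{pr:sp0}. So the step you flagged as the ``main obstacle'' is already isolated in the appendix as Proposition~\ref{pr:sp0} (cited from \cite{MonSva10-1}), and your decomposition for item~(d) is exactly the $(a)\Leftrightarrow(b)$ direction of Proposition~\ref{pr:a7}.
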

\begin{proof}
  We first prove Items (a) and (c). Using Lemma~\ref{lm:hpe}, the last
  statement in Proposition~\ref{pr:large} and \eqref{eq:mip2} we have that
  Items (a) and (b) of Theorem~\ref{lm:rhpe2} hold for the sequences
  $(\tilde z_{k_i})_{i\in I}$, $(v_{k_i})_{i\in I}$ and
  $(\varepsilon_{k_i})_{i\in I}$. As a consequence, to finish the proof of
  Items (a) and (c) of the theorem, it remains to prove the inclusions
  \eqref{eq:kkt-p} and \eqref{eq:kkt-e}.
To this end, note first that 
from the equivalence between Items (a) and (c) of 
Proposition \ref{pr:a7} 
(with $\varepsilon'=0$)
we have the following equivalence for all $i\in I$
\begin{align*}
 v_{k_i} \in(\So+\Nc_{\R^n\times\R^m_+}^
        {[\varepsilon_{k_i}]})
 (\tilde x_{k_i},\tilde y_{k_i}) \iff
 v_{k_i}\in \partial_{\varepsilon_{k_i}}\left(
 \;\overline{\mathscr{L}}(\cdot,\tilde y_{k_i})-
 \overline{\mathscr{L}}(\tilde x_{k_i},\cdot)\right)
 (\tilde x_{k_i},\tilde y_{k_i}).
\end{align*} 
Hence, using the latter
equivalence, the first inclusion in (the first line)
of \eqref{eq:a.is.hpe.lp}, the inclusion in 
Theorem \ref{lm:rhpe2}(a), the remark after the latter
theorem, and \eqref{eq:mip2} we 
obtain \eqref{eq:kkt-p}.
%
Likewise, using an analogous reasoning 
and Proposition \ref{pr:sp0} we
also obtain \eqref{eq:kkt-e},
which finishes the proof of Items (a) and (c).

We claim that Item (b) follows from Item (a). 
Indeed, 
letting
$(p_i,q_i):=v_{k_i}$ (for all $i\in I$), 
using the definition of $v_{k_j}$ and 
$\varepsilon_{k_j}$ 
in \eqref{eq:s60},
the definition of $\So$ in \eqref{def:lag.e}
and the equivalence between Items (a) and (b)
of Proposition \ref{pr:a7} 
(with $\varepsilon'=0$) 
we obtain 
that $(p_j,q_j):=v_{k_j}$
satisfies \eqref{eq:kkt-pp} and \eqref{eq:kkt-p-rr}.
Using an analogous reasoning
we obtain that Item (d) follows from Item (c). 
\end{proof}

Next we analyze the sequence generated by 
Algorithm 1 for the set of indexes
$k\in B$.
Direct use of Algorithm 1's definition shows that
\begin{align}
\label{eq:form.lambda}
   \lambda_{k+1}=
  \left(\dfrac{1}{1-\tau}\right)^{\#B_k-\#A_k}\lambda_1
  \qquad \forall k\geq 1.
\end{align}
Define
\begin{align}
\label{def:rhobar}
  \overline{\rho}
   =\dfrac{2\theta^2}
  {\lambda_1\left(\dfrac{L_0}{2}+
  \sqrt{\left(\dfrac{L_0}{2}\right)^{2}
     +\dfrac{8\norm{L_g}\theta^2}{3\lambda_1}}\right)}.
\end{align}

In the next proposition we obtain a 
rate of convergence result for the sequence
generated by Algorithm 1 with $k\in B$.

\begin{proposition}
\label{pr:geo.rate}
Let $\rho_k$ for all $k\geq 1$ be as in
\eqref{eq:zks} and let also $\bar\rho>0$ be as in 
\eqref{def:rhobar}.
Then, for all $k\in B$,
\begin{align*}
 v_k\in \partial_{\varepsilon_k}
   \left(\;\overline{\mathscr{L}}(\cdot,\tilde y_k)
   -\overline{\mathscr{L}}(\tilde x_k,\cdot)\right)
    (\tilde x_k,\tilde y_k)
\end{align*}
and
\begin{align*}
\norm{v_k}\leq \dfrac{(1+1/\sigma)\rho_k}
     {\lambda_k},\qquad 
      \varepsilon_k\leq\dfrac{\rho_k^2}{2\lambda_k}.
\end{align*}
Moreover, if $\lambda_k\geq \lambda_1$ then 
$\rho_k\leq \overline{\rho}$.
\end{proposition}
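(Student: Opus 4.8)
The plan is to prove the three claims of Proposition~\ref{pr:geo.rate} in order, leaning on results already established for the index set $B$. For the inclusion and the bounds on $\norm{v_k}$ and $\varepsilon_k$: since $k\in B$ the variables $w_k,v_k,\varepsilon_k$ are defined exactly as in \eqref{eq:s60}, so by Proposition~\ref{pr:ma}(b) we have $v_k\in(\So+\Nc_{\R^n\times\R^m_+}^{[\varepsilon_k]})(\tilde z_k)$, and the $\varepsilon$-subdifferential reformulation (the equivalence between items (a) and (c) of Proposition~\ref{pr:a7}, invoked with $\varepsilon'=0$, exactly as was done in the proof of Theorem~\ref{pr:conv.rate}) converts this into $v_k\in\partial_{\varepsilon_k}(\overline{\mathscr{L}}(\cdot,\tilde y_k)-\overline{\mathscr{L}}(\tilde x_k,\cdot))(\tilde x_k,\tilde y_k)$. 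The quantitative bounds then come from Proposition~\ref{pr:ma}(c), which gives $\norm{\lambda_k v_k+\tilde z_k-z_{k-1}}^2+2\lambda_k\varepsilon_k=\Psi_{\So,z_{k-1},\lambda_k}(\tilde z_k)\le\rho_k^2$; this already yields $\varepsilon_k\le\rho_k^2/(2\lambda_k)$. For the bound on $\norm{v_k}$, I would use $\norm{\lambda_k v_k}\le\norm{\lambda_k v_k+\tilde z_k-z_{k-1}}+\norm{\tilde z_k-z_{k-1}}\le\rho_k+\norm{\tilde z_k-z_{k-1}}$, and then invoke $k\in B$, which by \eqref{eq:AB} means $\rho_k>\sigma\norm{\tilde z_k-z_{k-1}}$, i.e. $\norm{\tilde z_k-z_{k-1}}<\rho_k/\sigma$; combining gives $\norm{\lambda_k v_k}\le(1+1/\sigma)\rho_k$, hence the claimed $\norm{v_k}\le(1+1/\sigma)\rho_k/\lambda_k$.

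For the last statement, suppose $\lambda_k\ge\lambda_1$. Here I would use the definition $\rho_k=\rho(\tilde y_k,\theta^2/\lambda_k)$ from \eqref{eq:zks} together with Definition~\ref{df:rho}: $\rho_k$ is the largest root of $\big((L_0+\inner{L_g}{|\tilde y_k|})/2+(2\norm{L_g}/3)\rho\big)\rho=\theta^2/\lambda_k$. Dropping the nonnegative term $\inner{L_g}{|\tilde y_k|}/2$ from the coefficient only decreases the left-hand side for fixed $\rho>0$, so the largest root of $\big(L_0/2+(2\norm{L_g}/3)\rho\big)\rho=\theta^2/\lambda_k$ is at least $\rho_k$; and since $\lambda_k\ge\lambda_1$ we have $\theta^2/\lambda_k\le\theta^2/\lambda_1$, which further increases the root to the largest root of $\big(L_0/2+(2\norm{L_g}/3)\rho\big)\rho=\theta^2/\lambda_1$. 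Solving this quadratic $\tfrac{2\norm{L_g}}{3}\rho^2+\tfrac{L_0}{2}\rho-\tfrac{\theta^2}{\lambda_1}=0$ by the quadratic formula and rationalizing the numerator gives exactly the expression $\overline{\rho}$ in \eqref{def:rhobar}, so $\rho_k\le\overline{\rho}$.

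The only mildly delicate point is the monotonicity argument in the last step: one must check that enlarging a coefficient of a monic-type quadratic $a\rho^2+b\rho-c$ with $a,b,c>0$ moves its unique positive root to the left, and that decreasing the constant $c$ does the same. Both follow since the positive root is a strictly increasing function of $c/b$ and a decreasing function of $a$ (equivalently, evaluate the quadratic with the enlarged coefficients at $\rho_k$ and observe the value is $\ge 0$, so the positive root lies at or below $\rho_k$'s analogue). I would state this as a one-line observation rather than belabor it. Everything else is a direct substitution, so I expect no real obstacle; the proof is short.
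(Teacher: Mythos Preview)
Your proposal is correct and follows essentially the same route as the paper: the inclusion via Proposition~\ref{pr:ma}(b) and Proposition~\ref{pr:a7}, the bound on $\varepsilon_k$ directly from Proposition~\ref{pr:ma}(c), the bound on $\norm{v_k}$ via the triangle inequality together with the defining inequality of $B$, and the final estimate $\rho_k\le\overline{\rho}$ from the monotonicity of the positive root in the coefficients. The paper's only cosmetic difference in the last step is that it writes out the explicit quadratic-formula expression for $\rho_k$ and then bounds it by $\overline{\rho}$ using $\inner{L_g}{|\tilde y_k|}\ge 0$ and $\lambda_k\ge\lambda_1$, rather than phrasing it as a root-comparison argument; the content is identical.
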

\begin{proof}
First note that the desired inclusion follows from
Proposition \ref{pr:ma}(b) and the equivalence
between items (a) and (c) of Proposition \ref{pr:a7}.  
Moreover, by Proposition~\ref{pr:ma} (c) we have
\begin{align*}
 \norm{\lambda_kv_k+\tilde z_k-z_{k-1}}^2+
 2\lambda_k\varepsilon_k\leq \rho_k^2
 \qquad \forall k\geq 1.
\end{align*}
Note that the desired bound on $\varepsilon_k$ is a direct
consequence of the latter inequality. 
Moreover, this inequality
combined with the definition of $B$
(see \eqref{eq:AB}) gives
$\norm{\lambda_k v_k}\leq 
\norm{\lambda_k v_k+\tilde z_k-z_{k-1}}+
\norm{\tilde z_k-z_{k-1}}\leq (1+1/\sigma)\rho_k$ for
all $k\in B$,
which proves the desired bound on $\norm{v_k}$. 

Assume now that $\lambda_k\geq \lambda_1$.
Using Definition \ref{df:rho} and 
\eqref{eq:zks} we obtain for all $k\geq 1$
\[
 \rho_k=\rho(\tilde y_k,\theta^2/\lambda_k)=
\dfrac{2\theta^2}
{
\lambda_k\left(\dfrac{L_0+\inner{L_g}
{|\tilde y_k|}}{2}+
\sqrt{\left(\dfrac{L_0+\inner{L_g}
{|\tilde y_k|}}{2}\right)^2+\dfrac{8
\norm{L_g}\theta^2}{3\lambda_k}}\right)
},
\]
which, in turn, combined with \eqref{def:rhobar},
the assumption that $\lambda_k\geq \lambda_1$ and
the fact that $\inner{L_g}{|\tilde y_k|}\geq 0$
gives $\rho_k\leq \bar\rho$.
\end{proof}

Next we present the two main results of this paper, namely,
the pointwise and ergodic iteration-complexities
of Algorithm 1.

\begin{theorem}[pointwise iteration-complexity]
\label{th:pt.com}
For given tolerances 
$\overline\delta>0$ and 
$\overline\varepsilon>0$, after at most
\begin{align}
\nonumber
M&:=2\left\lceil\max\left\{
\dfrac{d_0^2}{\overline\delta \tau(1-\sigma)\eta},\;
 \dfrac{\sigma^{4/3}d_0^2}
{\overline\varepsilon^{2/3}\tau(1-\sigma^2)
(2\eta)^{2/3}}
    \right\}\right\rceil\\
\label{eq:pt.comp}
&\qquad    +
  \left\lceil
  \dfrac{
  \max\left\{
    \log^+ 
 \left((1+1/\sigma)\overline{\rho}/
  (\overline\delta \lambda_1)\right),
  \log^+ \left(\overline{\rho}^2/
  (2\overline\varepsilon\lambda_1)\right)
    \right\}}{\log (1/(1-\tau))}\right\rceil
  \end{align}
iterations, \emph{Algorithm 1}
finds $((x,y),v,\varepsilon)$
satisfying \eqref{eq:def.aps}
with the property that $((x,y),(p,q),\varepsilon)$
where $(p,q):=v$ also satisfies
%
\begin{align}
\label{eq:ap.kkt6}
\begin{aligned}
&p=\nabla f(x)+\nabla g(x)y,\quad
g(x)+q\leq 0,\quad
y\geq 0,\quad
\inner{y}{g(x)+q}=-\varepsilon,\\
& \norm{(p,q)}\leq \overline\delta,\;
\varepsilon\leq \overline\varepsilon.
\end{aligned}
\end{align}
\end{theorem}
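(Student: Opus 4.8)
The plan is to combine the two types of iterations in Algorithm 1 and bound how many of each can occur before the tolerances are met. The iterations with $k\in A$ are, by Lemma~\ref{lm:hpe} and Proposition~\ref{pr:large}, a bona-fide realization of the large-step r-HPE method for~\eqref{eq:mip2}, so Theorem~\ref{pr:conv.rate}(a)--(b) applies: once $\#A_k = i$ is large enough that the bounds in~\eqref{eq:kkt-p-r} (equivalently~\eqref{eq:kkt-p-rr}) fall below $\overline\delta$ and $\overline\varepsilon$, we have a point satisfying~\eqref{eq:ap.kkt6}. Solving $d_0^2/(i\tau(1-\sigma)\eta)\le\overline\delta$ and $\sigma^2 d_0^3/((i\tau)^{3/2}(1-\sigma^2)^{3/2}2\eta)\le\overline\varepsilon$ for $i$ gives exactly the first term (before the factor $2$) of $M$; I would write $i_A$ for this threshold, namely the $\lceil\cdot\rceil$ expression inside the first line of~\eqref{eq:pt.comp}.

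Next I would bound $\#B_k$. The point here is that consecutive $B$-iterations drive $\lambda_k$ up geometrically (by the factor $(1-\tau)^{-1}$ each time, and down by $(1-\tau)$ on each $A$-iteration), via~\eqref{eq:form.lambda}. Once $\lambda_k\ge\lambda_1$ — which certainly holds as soon as $\#B_k\ge\#A_k$ — Proposition~\ref{pr:geo.rate} gives $\rho_k\le\overline\rho$, hence for $k\in B$ the quantities $\|v_k\|\le(1+1/\sigma)\overline\rho/\lambda_k$ and $\varepsilon_k\le\overline\rho^2/(2\lambda_k)$ shrink geometrically in the number of $B$-steps that have accumulated ``excess'' over $A$-steps. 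So after at most $i_B$ additional $B$-iterations beyond the count of $A$-iterations, where $i_B$ is the second $\lceil\cdot\rceil$ term in~\eqref{eq:pt.comp} (the one with $\log^+$ and $\log(1/(1-\tau))$), every $k\in B$ produces a triple satisfying the inclusion and bounds of Proposition~\ref{pr:geo.rate}, which by the equivalence of Items (a) and (b) of Proposition~\ref{pr:a7} is exactly~\eqref{eq:ap.kkt6}.

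Finally I would do the bookkeeping that turns ``$i_A$ good $A$-steps'' plus ``$i_B$ net-excess $B$-steps'' into the total iteration count $M$. The subtle point is that $A$- and $B$-iterations are interleaved in an unknown pattern. Let $k$ be the first index at which $\#A_k\ge i_A$; up to that point we have done $\#A_k + \#B_k$ iterations with $\#A_k\le i_A$ (actually $=i_A$ at the critical step), but $\#B_k$ could be large. However, if at step $k$ we have $\#B_k\ge\#A_k$, then $\lambda_k\ge\lambda_1$ and Proposition~\ref{pr:geo.rate} already gives a solution via the $B$-branch as soon as the excess $\#B_k-\#A_k$ reaches $i_B$; and the excess cannot exceed $i_B$ for more than $i_B$ consecutive $B$-steps without terminating. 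A clean way to phrase the count: the algorithm terminates the first time either $\#A_k\ge i_A$ (giving a type-(b) point of Theorem~\ref{pr:conv.rate}) or $\#B_k-\#A_k\ge i_B$ with $\lambda_k\ge\lambda_1$ (giving a point from Proposition~\ref{pr:geo.rate}); in either case $\#A_k\le i_A$ and $\#B_k\le i_A+i_B$, so $k=\#A_k+\#B_k\le 2i_A + i_B = M$.

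\textbf{The main obstacle} I expect is precisely this last interleaving argument: one has to argue that $\#B_k$ stays controlled by $\#A_k$ plus the geometric-decay budget $i_B$, rather than growing without bound, and that the stepsize $\lambda_k\ge\lambda_1$ hypothesis needed for Proposition~\ref{pr:geo.rate} is automatically in force whenever the $B$-count would otherwise be problematic. This requires carefully tracking~\eqref{eq:form.lambda}: whenever $\#B_k\le\#A_k$ we rely on the $A$-branch bound and $\#B_k$ is harmless; whenever $\#B_k>\#A_k$ we have $\lambda_k>\lambda_1$ so $\rho_k\le\overline\rho$ and each further $B$-step tightens the $B$-branch bounds geometrically, so the excess $\#B_k-\#A_k$ is at most $i_B$ at termination. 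Everything else is substituting the convergence-rate estimates already proved (Theorem~\ref{pr:conv.rate}, Proposition~\ref{pr:geo.rate}) into the defining inequalities of~\eqref{eq:def.aps} and simplifying, and invoking Proposition~\ref{pr:a7} to rewrite the saddle-point-operator inclusions in the $\varepsilon$-subdifferential / KKT-residual form of~\eqref{eq:ap.kkt6}.
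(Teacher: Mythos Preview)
Your proposal is correct and follows essentially the same approach as the paper: both combine Theorem~\ref{pr:conv.rate}(a)--(b) for the $A$-iterations with Proposition~\ref{pr:geo.rate} and the stepsize formula~\eqref{eq:form.lambda} for the $B$-iterations, then invoke Proposition~\ref{pr:a7} to pass to the KKT-residual form. Your ``first-time-a-threshold-is-met'' bookkeeping is a mild reorganization of the paper's case split on whether $\#A\ge M_1$ or $\#A<M_1$, and in fact handles the interleaving of $A$- and $B$-steps a bit more explicitly than the paper does; the ingredients and the final count $2i_A+i_B=M$ are identical.
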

\begin{proof}
First define
\begin{align}
\label{eq:def.mm}
  M_1:=\left\lceil\max\left\{
\dfrac{d_0^2}{\overline\delta \tau(1-\sigma)\eta},\;
 \dfrac{\sigma^{4/3}d_0^2}
{\overline\varepsilon^{2/3}\tau(1-\sigma^2)
(2\eta)^{2/3}}
    \right\}\right\rceil\quad \mbox{and}\qquad
  M_2:=M-2M_1.
 \end{align}
The proof is divided in two cases:
(i) $\#A\geq M_1$ and (ii) $\#A<M_1$ . 
In the first case,
the existence of $((x,y),v,\varepsilon)$ 
(resp. $((x,y),(p,q),\varepsilon)$)
satisfying \eqref{eq:def.aps}
(resp. \eqref{eq:ap.kkt6})
in at most $M_1$ iterations
follows from Theorem \ref{pr:conv.rate}(a)
(resp. Theorem \ref{pr:conv.rate}(b)).
Since $M=2M_1+M_2\geq M_1$, it follows that, in this case,
the number of iterations is not bigger than $M$.

Consider now the case (ii)
and let $k^*\geq 1$ be such that 
$\#A=\#A_{k^*}=\#A_k$ for all $k\geq k^*$. 
As a consequence of the latter property
and the fact that $\#A<M_1$ we
conclude that if $\#B_k\geq M_1+M_2$, 
for some $k\geq k^*$, 
then 
\begin{align}
 \label{eq:def.mz}
 \beta_k:=\#B_k-\#A_k\geq \#B_k-M_1\geq M_2. 
\end{align}
Using the latter inequality, \eqref{eq:pt.comp}
and \eqref{eq:def.mm} we find 
\[
 \beta_k \geq M_2\geq 
\dfrac{
  \max\left\{
    \log^+ 
 \left((1+1/\sigma)\overline{\rho}/
  (\overline\delta \lambda_1)\right),
  \log^+ \left(\overline{\rho}^2/
  (2\overline\varepsilon\lambda_1)\right)
    \right\}}{\log (1/(1-\tau))},
\]
which is clearly equivalent to
\begin{align}
\label{eq:log2}
 &\log\left(
  \left(\dfrac{1}{1-\tau}\right)^{\beta_k}\lambda_1\right)
 +\log\left(\dfrac{1}{(1+1/\sigma)\bar\rho}\right)
\geq \log\left(\dfrac{1}{\bar\delta}\right),\\
\label{eq:log3}
& \log\left(
  \left(\dfrac{1}{1-\tau}\right)^{\beta_k}\lambda_1\right)
 +\log\left(\dfrac{2}{\bar\rho^2}\right)
\geq \log\left(\dfrac{1}{\bar\varepsilon}\right).
\end{align}
Now using the definition in \eqref{eq:def.mz}, 
\eqref{eq:log2} (resp. \eqref{eq:log3}) and \eqref{eq:form.lambda}
we obtain  
$\log(\lambda_k/[(1+1/\sigma)\bar\rho])
\geq \log(1/\bar\delta)$
(resp. $\log(2\lambda_k/\bar\rho^2)\geq 
\log(1/\bar \varepsilon)$) which yields 
\[
 \dfrac{(1+1/\sigma)\bar\rho}{\lambda_k}\leq 
 \bar\delta
 \qquad \left(\mbox{resp.}\;\; 
  \dfrac{\bar\rho^2}{2\lambda_k}\leq \bar\varepsilon
 \right).
\]
It follows from the latter inequality and
Proposition \ref{pr:geo.rate}
that $((x,y),v,\varepsilon):=
((\tilde x_k,\tilde y_k),v_k,\varepsilon_k)$
satisfies \eqref{eq:def.aps} and, due to 
Proposition \ref{pr:a7}, that
$((x,y),(p,q),\varepsilon):=
((\tilde x_k,\tilde y_k),v_k,\varepsilon_k)$
satisfies \eqref{eq:ap.kkt6}. 
Since the index $k$ has been chosen
to satisfy $\#A_k<M_1$ and $\#B_k\geq M_1+M_2$ we conclude that
the total number of iterations
is at most $M_1+(M_1+M_2)=M$. 
\end{proof}


\begin{theorem}[ergodic iteration-complexity]
\label{th:erg.com}
For given tolerances 
$\overline\delta>0$ and 
$\overline\varepsilon>0$, after at most
\begin{align}
 \nonumber
\widetilde {M}&:=2\left\lceil\max\left\{
\dfrac{2^{2/3}d_0^{4/3}}
{\overline\delta^{2/3}\tau 
 \left(\eta\sqrt{1-\sigma}\right)^{2/3}},\;
 \dfrac{2^{2/3}d_0^2}
{\overline\varepsilon^{2/3}\tau
\left(\eta(1-\sigma^2)\right)^{2/3}}
    \right\}\right\rceil\\
\label{eq:erg.comp}
&\qquad    +
  \left\lceil
  \dfrac{
  \max\left\{
    \log^+ 
 \left((1+1/\sigma)\overline{\rho}/
  (\overline\delta \lambda_1)\right),
  \log^+ \left(\overline{\rho}^2/
  (2\overline\varepsilon\lambda_1)\right)
    \right\}}{\log (1/(1-\tau))}\right\rceil
  \end{align}
iterations, \emph{Algorithm 1}
finds $((x,y),v,\varepsilon)$
satisfying \eqref{eq:def.aps}
with the property that 
$((x,y),(p,q),\varepsilon)$
where $(p,q):=v$
also satisfies
\begin{align}
\label{eq:ap.kkt8}
\begin{aligned}
&p\in \partial_{x,\varepsilon'}
\overline{\mathscr{L}}(x,y),\
\quad
g(x)+q\leq 0,\quad
y\geq 0,\quad
\inner{y}{g(x)+q}\geq -\varepsilon,\\
& \norm{(p,q)}\leq \overline\delta,\;
\varepsilon\leq \overline\varepsilon,
\end{aligned}
\end{align}
where $\varepsilon':=\varepsilon+\inner{y}{g(x)+q}$.
\end{theorem}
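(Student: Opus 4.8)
The plan is to mirror the structure of the proof of Theorem~\ref{th:pt.com}, replacing the \emph{pointwise} estimates of Theorem~\ref{pr:conv.rate}(a)--(b) by the \emph{ergodic} estimates of Theorem~\ref{pr:conv.rate}(c)--(d). Concretely, I would first split into the two cases (i)~$\#A\geq \widetilde M_1$ and (ii)~$\#A<\widetilde M_1$, where
\[
\widetilde M_1:=\left\lceil\max\left\{
\dfrac{2^{2/3}d_0^{4/3}}{\overline\delta^{2/3}\tau\left(\eta\sqrt{1-\sigma}\right)^{2/3}},\;
\dfrac{2^{2/3}d_0^2}{\overline\varepsilon^{2/3}\tau\left(\eta(1-\sigma^2)\right)^{2/3}}
\right\}\right\rceil,
\qquad \widetilde M_2:=\widetilde M-2\widetilde M_1 .
\]
In case~(i), I would invoke Theorem~\ref{pr:conv.rate}(c)--(d): after $i:=\widetilde M_1$ indices in $A$ have been produced, the ergodic triple $((\tilde x_i^a,\tilde y_i^a),v_i^a,\varepsilon_i^a)$ (and the associated $(p_i^a,q_i^a)$ from Item~(d)) satisfies $\|v_i^a\|\leq 2d_0^2/((i\tau)^{3/2}\sqrt{1-\sigma^2}\,\eta)$ and $\varepsilon_i^a\leq 2d_0^3/((i\tau)^{3/2}(1-\sigma^2)\eta)$; a routine check shows that with $i=\widetilde M_1$ both of these are $\leq\overline\delta$ and $\leq\overline\varepsilon$ respectively, so \eqref{eq:def.aps} and \eqref{eq:ap.kkt8} hold, and since $\widetilde M\geq\widetilde M_1$ (indeed $\widetilde M=2\widetilde M_1+\widetilde M_2\geq\widetilde M_1$) the iteration count is at most $\widetilde M$. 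The inequality $\varepsilon'=\varepsilon+\inner{y}{g(x)+q}\geq 0$ needed to land in \eqref{eq:def.aps3}/\eqref{eq:ap.kkt8} comes directly from the third relation in \eqref{eq:spq}.

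In case~(ii), $\#A<\widetilde M_1$, so after finitely many iterations $A$ stops growing; pick $k^*$ with $\#A_k=\#A$ for $k\geq k^*$. If $\#B_k\geq \widetilde M_1+\widetilde M_2$ for some $k\geq k^*$, then exactly as in \eqref{eq:def.mz}, $\beta_k:=\#B_k-\#A_k\geq\widetilde M_2$. The tail term of $\widetilde M$ in \eqref{eq:erg.comp} is \emph{identical} to that of \eqref{eq:pt.comp}, so the same algebra that produced \eqref{eq:log2}--\eqref{eq:log3} in the proof of Theorem~\ref{th:pt.com} applies verbatim: combining $\beta_k\geq\widetilde M_2$ with \eqref{eq:form.lambda} yields $(1+1/\sigma)\overline\rho/\lambda_k\leq\overline\delta$ and $\overline\rho^2/(2\lambda_k)\leq\overline\varepsilon$. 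Since $\beta_k\geq\widetilde M_2\geq 0$ forces $\lambda_k\geq\lambda_1$ (via \eqref{eq:form.lambda}), Proposition~\ref{pr:geo.rate} gives $\rho_k\leq\overline\rho$, hence $\|v_k\|\leq(1+1/\sigma)\rho_k/\lambda_k\leq(1+1/\sigma)\overline\rho/\lambda_k\leq\overline\delta$ and $\varepsilon_k\leq\rho_k^2/(2\lambda_k)\leq\overline\rho^2/(2\lambda_k)\leq\overline\varepsilon$; moreover $v_k\in\partial_{\varepsilon_k}(\overline{\mathscr L}(\cdot,\tilde y_k)-\overline{\mathscr L}(\tilde x_k,\cdot))(\tilde x_k,\tilde y_k)$ by Proposition~\ref{pr:geo.rate}. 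Then, just as in Theorem~\ref{th:pt.com}, Proposition~\ref{pr:a7} (the equivalence of its items (a) and (c), and also (c) $\Rightarrow$ the saddle-function form) converts this into \eqref{eq:ap.kkt8} with $(p,q):=v_k$ and $\varepsilon':=\varepsilon+\inner{y}{g(x)+q}$. The total iteration count in this case is $\#A_k+\#B_k\leq \widetilde M_1+(\widetilde M_1+\widetilde M_2)=\widetilde M$.

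The only genuinely non-routine point is verifying in case~(i) that the ergodic bounds of Theorem~\ref{pr:conv.rate}(c)--(d), evaluated at $i=\widetilde M_1$, are indeed $\leq\overline\delta$ and $\leq\overline\varepsilon$: one must solve $2d_0^2/((i\tau)^{3/2}\sqrt{1-\sigma^2}\,\eta)\leq\overline\delta$ for $i$, obtaining $i\geq 2^{2/3}d_0^{4/3}/(\overline\delta^{2/3}\tau(\eta\sqrt{1-\sigma^2})^{2/3})$, and similarly for $\overline\varepsilon$, then check that the ceiling of the max of these two quantities is exactly $\widetilde M_1$ as written in \eqref{eq:erg.comp} — a direct but slightly fiddly exponent-matching computation. (Note $\sqrt{1-\sigma^2}\geq\sqrt{1-\sigma}$ when $0<\sigma<1$? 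No: $1-\sigma^2=(1-\sigma)(1+\sigma)\geq 1-\sigma$, so $\sqrt{1-\sigma^2}\geq\sqrt{1-\sigma}$, hence using $\sqrt{1-\sigma}$ in the denominator of $\widetilde M_1$ only \emph{strengthens} the requirement, which is why \eqref{eq:erg.comp} is a legitimate upper bound.) Everything else is a transcription of the argument already given for the pointwise theorem, since the two statements share the same logarithmic tail term and the same case analysis on $\#A$ versus the first summand.
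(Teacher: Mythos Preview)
Your proposal is correct and follows exactly the approach the paper takes: the paper's entire proof of Theorem~\ref{th:erg.com} is the single sentence ``The proof follows the same outline of Theorem~\ref{th:pt.com}'s proof,'' and you have supplied precisely that outline, including the case split on $\#A$, the use of Theorem~\ref{pr:conv.rate}(c)--(d) in place of (a)--(b), and the identical treatment of the logarithmic tail term via Proposition~\ref{pr:geo.rate}. Your remark that $\sqrt{1-\sigma^2}\geq\sqrt{1-\sigma}$ (so the stated $\widetilde M_1$ is a valid, if slightly loose, upper bound) is a correct and useful detail that the paper leaves implicit.
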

\begin{proof}
 The proof follows the same outline
of Theorem \ref{th:pt.com}'s proof. 
\end{proof}

\appendix

\section{Appendix}

\begin{proposition}
\label{pr:a7}
 Let $(\tilde x,\tilde y)\in 
 \R^n\times \R^m_+$, 
 $v=(p,q)
  \in \R^n\times \R^m$
 and $\varepsilon\geq 0$ be given and define
 \begin{align} 
  \label{eq:vele}
w:=-(g(\tilde x)+q),\qquad  
\varepsilon':= \varepsilon-\inner{\tilde y}{w}.
 \end{align}
 The following conditions are equivalent:
 \begin{itemize}
 \item[\emph{(a)}]
   $
   v\in \partial_\varepsilon 
  \left(\overline{\mathscr{L}}(\cdot,\tilde y)
 -\overline{\mathscr{L}}(\tilde x,\cdot)
  \right)(\tilde x,\tilde y);
    $ 
 \item[\emph{(b)}]
 $
  w\geq 0,\quad  \inner{\tilde y}{w}\leq \varepsilon,
  \quad p\in \partial_{x,\varepsilon'}
 \overline{\mathscr{L}}(\tilde x,\tilde y);
 $ 
\item[\emph{(c)}] $0\leq \varepsilon'\leq\varepsilon$
 and
 $-w\in N_{\R^m_+}^{[\varepsilon-\varepsilon']}
 (\tilde y)$,\;\; 
 $p\in \partial_{x,\varepsilon'}\overline{\mathscr{L}}
  (\tilde x,\tilde y)$.
 \end{itemize}
\end{proposition}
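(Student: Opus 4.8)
The plan is to reduce condition~(a) to an explicit statement by exploiting that the function appearing in~(a) is separable in the primal and dual blocks, and then to verify the two resulting equivalences by elementary bookkeeping together with the monotonicity of the $\varepsilon$-subdifferential in $\varepsilon$.

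First I would note that, since $\tilde y\in\R^m_+$, it follows from the definition \eqref{eq:lg2} of $\overline{\mathscr{L}}$ that the function $F(x,y):=\overline{\mathscr{L}}(x,\tilde y)-\overline{\mathscr{L}}(\tilde x,y)$ is proper, closed and convex, and separates as $F(x,y)=\phi(x)+\psi(y)$ with $\phi(x)=\overline{\mathscr{L}}(x,\tilde y)=f(x)+\inner{\tilde y}{g(x)}$ and $\psi(y)=-f(\tilde x)-\inner{y}{g(\tilde x)}+\delta_{\R^m_+}(y)$. For such a function, it is immediate from the definition of the $\varepsilon$-subdifferential that $(p,q)\in\partial_\varepsilon F(\tilde x,\tilde y)$ if and only if there exist $\varepsilon_1,\varepsilon_2\ge 0$ with $\varepsilon_1+\varepsilon_2=\varepsilon$ such that $p\in\partial_{\varepsilon_1}\phi(\tilde x)$ and $q\in\partial_{\varepsilon_2}\psi(\tilde y)$. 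Since $\psi$ differs from $\delta_{\R^m_+}$ by the affine map $y\mapsto -f(\tilde x)-\inner{y}{g(\tilde x)}$, whose gradient is $-g(\tilde x)$, we have $\partial_{\varepsilon_2}\psi(\tilde y)=-g(\tilde x)+N_{\R^m_+}^{[\varepsilon_2]}(\tilde y)$; combining this with \eqref{eq:npe} (in its obvious specialization to $\R^m_+$) and the definition $w=-(g(\tilde x)+q)$ in \eqref{eq:vele}, one checks that $q\in\partial_{\varepsilon_2}\psi(\tilde y)$ is equivalent to $w\ge 0$ and $\inner{\tilde y}{w}\le\varepsilon_2$. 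Recalling that $\partial_{\varepsilon_1}\phi(\tilde x)=\partial_{x,\varepsilon_1}\overline{\mathscr{L}}(\tilde x,\tilde y)$, this reduces (a) to the following statement: there exist $\varepsilon_1,\varepsilon_2\ge 0$ with $\varepsilon_1+\varepsilon_2=\varepsilon$ such that $p\in\partial_{x,\varepsilon_1}\overline{\mathscr{L}}(\tilde x,\tilde y)$, $w\ge 0$ and $\inner{\tilde y}{w}\le\varepsilon_2$.

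Next I would derive (a)$\Leftrightarrow$(b) from this reduced form. If (a) holds, then $w\ge 0$, $\inner{\tilde y}{w}\le\varepsilon_2\le\varepsilon$ (because $\varepsilon_1\ge 0$), and $\varepsilon_1=\varepsilon-\varepsilon_2\le\varepsilon-\inner{\tilde y}{w}=\varepsilon'$, so the monotonicity of the $\varepsilon$-subdifferential (i.e.\ $\partial_{x,\varepsilon_1}\overline{\mathscr{L}}(\tilde x,\tilde y)\subseteq\partial_{x,\varepsilon'}\overline{\mathscr{L}}(\tilde x,\tilde y)$ whenever $\varepsilon_1\le\varepsilon'$) yields $p\in\partial_{x,\varepsilon'}\overline{\mathscr{L}}(\tilde x,\tilde y)$, which is~(b). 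Conversely, given (b), take $\varepsilon_2:=\inner{\tilde y}{w}$, which is $\ge 0$ since $\tilde y,w\ge 0$ and $\le\varepsilon$ by~(b), and $\varepsilon_1:=\varepsilon-\varepsilon_2=\varepsilon'\ge 0$; then $p\in\partial_{x,\varepsilon_1}\overline{\mathscr{L}}(\tilde x,\tilde y)$ holds by~(b) and $\inner{\tilde y}{w}\le\varepsilon_2$ trivially, so the reduced form of~(a) is satisfied. The one step here that is not purely mechanical — and which I expect to be the only delicate point — is precisely this use of monotonicity to replace the ``some $\varepsilon_1\le\varepsilon'$'' coming out of the separability argument by the specific value $\varepsilon'$ prescribed in \eqref{eq:vele}.

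Finally, (b)$\Leftrightarrow$(c) is pure bookkeeping based on the identity $\varepsilon-\varepsilon'=\inner{\tilde y}{w}$ from \eqref{eq:vele}. The conjunction ``$w\ge 0$ and $\inner{\tilde y}{w}\le\varepsilon$'' is equivalent to ``$0\le\varepsilon'\le\varepsilon$ and $w\ge 0$'', the bound $\varepsilon'\le\varepsilon$ being automatic from $\inner{\tilde y}{w}\ge 0$; and by \eqref{eq:npe} the condition ``$w\ge 0$'' is in turn equivalent to ``$-w\in N_{\R^m_+}^{[\varepsilon-\varepsilon']}(\tilde y)$'', since the enlargement requirement $\inner{\tilde y}{w}\le\varepsilon-\varepsilon'$ is an equality by the very definition of $\varepsilon'$. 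As the inclusion $p\in\partial_{x,\varepsilon'}\overline{\mathscr{L}}(\tilde x,\tilde y)$ appears verbatim in both (b) and (c), stringing these observations together completes the equivalence and hence the proof.
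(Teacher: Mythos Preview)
Your proof is correct. The route differs mildly from the paper's: for $(a)\Leftrightarrow(b)$ the paper does not invoke the separability decomposition of the $\varepsilon$-subdifferential. Instead it simply writes out the defining inequality of~(a),
\[
\overline{\mathscr{L}}(x,\tilde y)-\overline{\mathscr{L}}(\tilde x,y)\geq \inner{p}{x-\tilde x}+\inner{q}{y-\tilde y}-\varepsilon\qquad\forall (x,y)\in\R^n\times\R^m_+,
\]
substitutes $q=-g(\tilde x)-w$ and $\varepsilon'=\varepsilon-\inner{\tilde y}{w}$, and takes the infimum over $y\ge 0$ to obtain
\[
\overline{\mathscr{L}}(x,\tilde y)-\overline{\mathscr{L}}(\tilde x,\tilde y)+\inf_{y\ge 0}\inner{w}{y}\geq \inner{p}{x-\tilde x}-\varepsilon'\qquad\forall x,
\]
which is immediately equivalent to~(b) (the infimum is $-\infty$ unless $w\ge 0$, and evaluating at $x=\tilde x$ forces $\varepsilon'\ge 0$). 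This avoids the auxiliary pair $(\varepsilon_1,\varepsilon_2)$ and the monotonicity step you flagged as delicate; your approach trades that directness for a cleaner structural picture via the known lemma on $\varepsilon$-subdifferentials of separable sums. For $(b)\Leftrightarrow(c)$ your argument and the paper's coincide.
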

\begin{proof}
$(a)\iff (b)$. Note that the inclusion in (a)
is equivalent to 
\begin{align}
\label{eq:744}
 \overline{\mathscr{L}}(x,\tilde y)-
 \overline{\mathscr{L}}(\tilde x,y)\geq
 \inner{p}{x-\tilde x}+\inner{q}{y-\tilde y}-\varepsilon
\qquad \forall (x,y)\in \R^n\times \R^m_+, 
\end{align}
which, in view of \eqref{eq:lg2}
and \eqref{eq:vele}, is equivalent to
\begin{align*}
 \overline{\mathscr{L}}(x,\tilde y)-
 \overline{\mathscr{L}}(\tilde x,\tilde y)
+\inf_{y\geq 0}\,\inner{w}{y}
\geq
 \inner{p}{x-\tilde x}-\varepsilon' \qquad \forall x\in \R^n.
\end{align*}
The latter inequality is clearly equivalent to (b).

$(b)\iff (c)$. Using \eqref{eq:npe}, the fact
that $\tilde y\geq 0$
and the definition of $\varepsilon'$ in \eqref{eq:vele}
we obtain that  
the first inequality in (b) is equivalent to
$\varepsilon'\leq \varepsilon$ and the second inequality
in (c). To finish the proof note that the second
inequality in (b) is equivalent to $\varepsilon'\geq 0$.
\end{proof}

\begin{proposition}
\label{pr:sp0}
Let $X\subset \R^n$ and $Y\subset \R^m$ be given convex sets and $\Gamma:X\times Y\to \R$ be a function such that, for each $(x,y)\in X\times Y$,
the function $\Gamma(\cdot,y)-\Gamma(x,\cdot):X\times Y\to \R$ is convex. Suppose that, for $j=1,\dots,i$,
$(\tilde{x}_j,\tilde{y}_j)\in X\times Y$ and 
$(p_{j},q_{j})\in \R^n\times \R^m$ satisfy
\begin{align*}
 (p_{j},q_{j})\in \partial_{\varepsilon_j}
\left(\Gamma(\cdot,\tilde{y}_j)-\Gamma(\tilde{x}_j,\cdot)\right)
(\tilde{x}_j,\tilde{y}_j)\,.
\end{align*}
Let $\alpha_1,\cdots,\alpha_i\geq 0$ be such that 
$\sum_{j=1}^i\,\alpha_j=1$, and define
\begin{align*}
 (\tilde{x}_i^a,\tilde{y}_i^a)=\sum_{j=1}^i\,\alpha_j (\tilde{x}_j,\tilde{y}_j),\quad (p_{i}^a,q_{i}^a)=\sum_{j=1}^i\,\alpha_j (p_{j},q_{j}),
\end{align*}
\begin{align*}
 \varepsilon_i^a=\sum_{j=1}^i\,\alpha_j
\left[\varepsilon_j+\inner{\tilde{x}_j-\tilde{x}_i^a}
{p_{j}}+\inner{\tilde{y}_j-\tilde{y}_i^a}{q_{j}}\right]\,.
\end{align*}
Then, $\varepsilon_i^a\geq 0$ and
\begin{align*}
 (p_{i}^a,q_{i}^a)\in \partial_{\varepsilon_i^a}
\left(\Gamma(\cdot,\tilde{y}_i^a)-\Gamma(\tilde{x}_i^a,\cdot)\right)
(\tilde{x}_i^a,\tilde{y}_i^a)\,.
\end{align*}
\end{proposition}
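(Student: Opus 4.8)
The plan is to unfold each $\varepsilon$-subdifferential inclusion into an elementary affine-minorant inequality, average these inequalities with the weights $\alpha_j$, and then invoke Jensen's inequality on the averaged left-hand side, using that the hypothesis on $\Gamma$ makes $\Gamma(\cdot,\bar y)$ convex and $\Gamma(\bar x,\cdot)$ concave for every fixed $(\bar x,\bar y)\in X\times Y$.

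First I would rewrite the hypothesis. Since the function $(x',y')\mapsto\Gamma(x',\tilde y_j)-\Gamma(\tilde x_j,y')$ vanishes at $(\tilde x_j,\tilde y_j)$, the inclusion $(p_j,q_j)\in\partial_{\varepsilon_j}\big(\Gamma(\cdot,\tilde y_j)-\Gamma(\tilde x_j,\cdot)\big)(\tilde x_j,\tilde y_j)$ is, by the definition of the $\varepsilon$-subdifferential, equivalent to
\[
\Gamma(x,\tilde y_j)-\Gamma(\tilde x_j,y)\ \ge\ \inner{p_j}{x-\tilde x_j}+\inner{q_j}{y-\tilde y_j}-\varepsilon_j\qquad\forall\,(x,y)\in X\times Y .
\]
Restricting the convexity hypothesis on $(x',y')\mapsto\Gamma(x',\bar y)-\Gamma(\bar x,y')$ to the slices $y'=\bar y$ and $x'=\bar x$ shows respectively that $\Gamma(\cdot,\bar y)$ is convex and $\Gamma(\bar x,\cdot)$ is concave, for every $(\bar x,\bar y)\in X\times Y$.

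Next I would fix an arbitrary $(x,y)\in X\times Y$, multiply the $j$-th inequality above by $\alpha_j\ge0$, and sum over $j=1,\dots,i$. Because $X$ and $Y$ are convex, $(\tilde x_i^a,\tilde y_i^a)\in X\times Y$, so Jensen's inequality applied to the concave map $\Gamma(x,\cdot)$ and to the convex map $\Gamma(\cdot,y)$ gives
\[
\sum_{j=1}^i\alpha_j\big(\Gamma(x,\tilde y_j)-\Gamma(\tilde x_j,y)\big)\ \le\ \Gamma(x,\tilde y_i^a)-\Gamma(\tilde x_i^a,y).
\]
Chaining this with the summed minorant inequalities bounds $\Gamma(x,\tilde y_i^a)-\Gamma(\tilde x_i^a,y)$ from below by $\sum_{j}\alpha_j\big(\inner{p_j}{x-\tilde x_j}+\inner{q_j}{y-\tilde y_j}-\varepsilon_j\big)$. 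Writing $x-\tilde x_j=(x-\tilde x_i^a)+(\tilde x_i^a-\tilde x_j)$ and $y-\tilde y_j=(y-\tilde y_i^a)+(\tilde y_i^a-\tilde y_j)$ and collecting terms against the definitions of $(p_i^a,q_i^a)$ and $\varepsilon_i^a$, this right-hand side equals $\inner{p_i^a}{x-\tilde x_i^a}+\inner{q_i^a}{y-\tilde y_i^a}-\varepsilon_i^a$. Hence
\[
\Gamma(x,\tilde y_i^a)-\Gamma(\tilde x_i^a,y)\ \ge\ \inner{p_i^a}{x-\tilde x_i^a}+\inner{q_i^a}{y-\tilde y_i^a}-\varepsilon_i^a\qquad\forall\,(x,y)\in X\times Y,
\]
which is exactly the asserted inclusion $(p_i^a,q_i^a)\in\partial_{\varepsilon_i^a}\big(\Gamma(\cdot,\tilde y_i^a)-\Gamma(\tilde x_i^a,\cdot)\big)(\tilde x_i^a,\tilde y_i^a)$. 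Finally, I would evaluate this last inequality at $(x,y)=(\tilde x_i^a,\tilde y_i^a)$, which makes the left-hand side vanish and yields $0\ge-\varepsilon_i^a$, i.e.\ $\varepsilon_i^a\ge0$.

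The only steps that require care are getting the two Jensen estimates in the correct direction — the concave variable flips the inequality, so the signs must be tracked carefully — and confirming that the averaged iterate $(\tilde x_i^a,\tilde y_i^a)$ stays in $X\times Y$, which legitimizes evaluating the minorant inequalities there; the rearrangement of the linear terms into the quantity $\varepsilon_i^a$ is a routine computation. I expect the sign bookkeeping in the Jensen step to be the main (minor) obstacle.
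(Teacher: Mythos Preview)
Your argument is correct: unfolding the $\varepsilon$-subdifferential inclusions as affine minorants, taking the convex combination, applying Jensen in each variable (using that the saddle structure makes $\Gamma(\cdot,\bar y)$ convex and $\Gamma(\bar x,\cdot)$ concave), and then regrouping the linear part to recognise $\varepsilon_i^a$ is exactly the standard proof of this transportation-of-$\varepsilon$-subgradients lemma. The paper itself does not prove the proposition but merely cites \cite[Proposition~5.1]{MonSva10-1}, so there is no in-paper argument to compare against; your write-up is precisely the kind of direct verification one would expect to find behind that citation.
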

\begin{proof}
 See~\cite[Proposition 5.1]{MonSva10-1}\,.
\end{proof}


%
%

\end{document}